\newtheorem{thm}{Theorem}[section]
\newtheorem{prop}[thm]{Proposition}
\newtheorem{lem}[thm]{Lemma}
\newtheorem{cor}[thm]{Corollary}
\newtheorem{defn}[thm]{Definition}
\theoremstyle{definition}
\newtheorem{rem}[thm]{Remark}
\theoremstyle{definition}
\newtheorem{ex}[thm]{Example}
\newcommand{\catname}[1]{\textup{\textbf{#1}}}
\newcommand{\infcatname}[2]{\mathcal{#1}\textup{#2}}
\newcommand{\Set}{\catname{Set}}
\newcommand{\Grp}{\catname{Grp}}
\newcommand{\sSet}{\catname{sSet}}
\newcommand{\sCat}{\catname{sCat}}
\newcommand{\biginfcat}{\widehat{\infcatname{C}{at}}_{\infty}}
\newcommand{\Fun}{\textup{Fun}}
\newcommand{\id}{\textup{id}}
\newcommand{\N}{\textup{N}}
\newcommand{\Ord}{\textup{\textbf{Ord}}}
\newcommand{\vp}{\textup{VP}}
\newcommand{\wvp}{\textup{WVP}}
\newcommand{\swvp}{\textup{SWVP}}
\newcommand{\ivp}{\textup{VP}_{\infty} }
\newcommand{\iwvp}{\textup{WVP}_{\infty}}
\newcommand{\iswvp}{\textup{SWVP}_{\infty}}
\title{Vop\v{e}nka's principle in $\infty$-categories}
\author{Giulio Lo Monaco}
\thanks{The author was supported by the Grant Agency of the Czech Republic under the grant 19-00902S}
\date{}
\begin{document}

\begin{abstract}
In this article, the interplay between Vop\v{e}nka's principle, as well as its weaker counterpart, and presentable $\infty$-categories is studied. Analogous statements, arising after replacing categories with $\infty$-categories in the original ones, are introduced and compared to these. Further, the attention is focused on the question of to what extent the consequences that (weak) Vop\v{e}nka's principle have on the detection of reflective subcategories of a presentable categories can be generalized to $\infty$-categories.
\end{abstract}

\maketitle
\tableofcontents

\section*{Introduction}
The purpose of this article is to investigate the behavior of $\infty$-categories in relation to (weak) Vop\v{e}nka's principle. The oldest version of it was introduced somewhat skeptically by Petr Vop\v{e}nka, who formulated it in the language of logic and at first thought of it as a sort of mockery, as he didn't believe it to be consistent with ZFC set theory, although he himself found a flaw in his proof of its inconsistency. The statement was then written up in \cite{SolovayReinhardtKanamori} and made popular in the first edition of \cite{Jech}, and a few other equivalent formulations were proven. Much of this is expounded in a set-theoretical language in \cite{Kanamori}.\\
Outside of set theory, there are a number of other equivalent formulations of Vop\v{e}nka's principle that instead take place in category theory, more in particular in the context of presentable categories. To my knowledge, the most complete survey of categorical facts that depend on, or relate to Vop\v{e}nka's principle in this other flavor is Chapter 6 of \cite{LocPresAccCat}. Moreover, as it is explained in the appendix of \emph{loc. cit.}, Vop\v{e}nka's principle is independent of ZFC set theory, and it actually occupies a very high rank in the hierarchy of large cardinals.
Section 6.D of \cite{LocPresAccCat} explores some of the consequences of Vop\v{e}nka's principle concerning the existence, or the description, of reflective subcategories of presentable ones, and it is to this section that most of our attention will be directed in the present study, whose main purpose is reformulating Vop\v{e}nka's principle, in one of its equivalent forms having the most plainly categorical spirit, in the context of $\infty$-categories, and asking ourselves the question whether analogous results to those detailed in \emph{loc. cit.} can be obtained in the homotopical case.\\
The statement known as weak Vop\v{e}nka's principle stems from one of the equivalent categorical formulations of its stronger counterpart, that is spelled out in \cite{LocPresAccCat}, Lemma 6.3(iii). It turns out that, by a quirk of mathematical nature, if the stronger Vop\v{e}nka's principle can be expressed by forbidding the existence of full embeddings of $\Ord$ into presentable categories, using $\Ord^{op}$ instead yields a statement which is still very high in the aforementioned hierarchy (\cite{LocPresAccCat}, Appendix), but strictly weaker than the original statement, as was shown in \cite{VP-WVP}. We will also analyze the behavior of this weaker principle and some of its consequences when transferred to $\infty$-categories.\\
By another quirk of mathematical nature, it so happens that weak Vop\v{e}nka's principle may be reformulated in an apparently loosened version, known as semi-weak Vop\v{e}nka's principle, which has recently been proven to be equivalent to the weak one (\cite{VP-WVP}). Among other consequences, this provides a relatively simple way to show that weak Vop\v{e}nka's principle can be equivalently formulated in 1-categories or $\infty$-categories, whereas a similar result is until now not known for Vop\v{e}nka's principle, which leaves open the question whether its 1-categorical and $\infty$-categorical formulations are equivalent.

\section*{Note on size issues}
As often in category theory, the set-theoretical issue of the size of the objects one is dealing with needs to be addressed one way or another, and expecially so in the present work, where the main statements that are taken in consideration are based on a distinction between certain collections of objects being “small” or “large”, so I believe a further clarification is in order.\\
One possible way is adopting NBG set theory, where the basic notion is that of class. A class is called a set if it can be an element of another class, and in particular all sets are subject to the axioms of ZFC set theory. On the other hand, we refer to a class as proper if it cannot be an element of another class. In this setting all categories are assumed to be comprised of a class of objects and a class of morphisms with the usual structure, and an $\infty$-category should be a functor $\Delta^{op} \to \catname{Class}$ with the usual inner horn lifting property; they are called small if all the classes in their definitions are in fact sets. Note that, if we want to be pedantic, these two definitions are not permitted unless we have an additional tier of collections that are even larger than classes. These are sometimes called conglomerates and in particular a conglomerate is said to be a class if it can be an element of another conglomerate.\\
Another approach is to adopt TG set theory and fixing in particular two Grothendieck universes $\mathcal{U} \in \mathcal{V}$. Elements of $\mathcal{U}$ will be called small, elements of $\mathcal{V}$ will be called large, a category will be comprised of two $\mathcal{V}$-sets plus the usual structure and an $\infty$-category is a functor $\Delta^{op} \to \mathcal{V}$-$\Set$ with the inner horn lifting property. They are called small if they in particular take values in elements of $\mathcal{U}$.\\
Both these approaches are equally good in overcoming all the size issues that arise in this study. The rest of the article is written  following the terminology conventions of NBG set theory, so for example we will be talking about “proper classes” and “ordinals” instead of “large sets” and “$\mathcal{V}$-ordinals”, but the conversion from one to the other is immediate and virtually problem free.\\
In particular, the definition of $\infty$-category in \cite{HTT} (which I'll be using as the main reference for these objects) is a simplicial \emph{set} with properties. However, Lurie himself explains in section 1.2.15 that he is adopting the approach of TG set theory throughout his book, so that his sets are possibly large sets in the sense specified above. Therefore, the fact that most of our $\infty$-categories are in fact proper classes should not be of any serious concern, since it is still merely part of the correspondence between the NBG and the TG terminology.

\section{A review of localizations}
In this section we will review several flavors of localization, of its dual notion, and the related notion of cellularization. We will start with the simplest ones.

\begin{defn}
Let $\mathcal{C}$ be a 1-category, and let $S$ be a class of morphisms in $\mathcal{C}$. An external localization of $\mathcal{C}$ at $S$ is a category $\mathcal{C}[S^{-1}]$ with a functor $\gamma: \mathcal{C} \to \mathcal{C}[S^{-1}]$ having the following universal property: for each category $\mathcal{D}$, precomposition with $\gamma$ induces an equivalence of functor categories

\begin{center}
$\gamma^{\ast}: \Fun(\mathcal{C}[S^{-1}],\mathcal{D}) \to \Fun_S(\mathcal{C},\mathcal{D})$
\end{center}

where the codomain is the full subcategory spanned by all functors $\mathcal{C} \to \mathcal{D}$ taking morphisms of $S$ to isomorphisms.
\end{defn}

Since it is defined in terms of a universal property, it is clear that, whenever it exists, an external localization is unique up to equivalence of categories. Moreover, if size issues are ignored, external localizations always exist in some possibly larger Grothendieck universe (or, for those who prefer the language of sets and classes, the external localization might not be locally small). Historically, one of the most used explicit constructions of it is the Gabriel-Zisman localization, introduced in \cite{GabrielZisman}. It is promptly transferred to the $\infty$-categorical context.

\begin{defn}
Let $\mathcal{C}$ be an $\infty$-category, and let $S$ be a class of morphisms in $\mathcal{C}$. An external localization of $\mathcal{C}$ at $S$ is an $\infty$-category $\mathcal{C}[S^{-1}]$ with a functor $\gamma: \mathcal{C} \to \mathcal{C}[S^{-1}]$ having the following universal property: for each $\infty$-category $\mathcal{D}$, precomposition with $\gamma$ induces an equivalence of functor $\infty$-categories

\begin{center}
$\gamma^{\ast}: \Fun(\mathcal{C}[S^{-1}], \mathcal{D}) \to \Fun_S(\mathcal{C},\mathcal{D})$
\end{center}

where the codomain is the full subcategory spanned by all functors $\mathcal{C} \to \mathcal{D}$ taking morphisms of $S$ to equivalences.
\end{defn}

Similarly, this localization always exists in some Grothendieck universe, and it is unique up to a contractible choice. The most common explicit way of defining it that applies to a generic $\infty$-category, when represented by a quasi-category, is the following: consider the subcategory $\mathbf{S} \subseteq \mathcal{C}$ spanned by all morphisms of $S$, then take a pushout

\begin{center}
\begin{tikzcd}
\mathbf{S} \ar[r] \ar[d, hook] & F_K \mathbf{S} \ar[d]\\
\mathcal{C} \ar[r] & \bullet
\end{tikzcd}
\end{center}

in the category of simplicial sets, where $F_K$ is a Kan fibrant replacement. Our $\infty$-category of interest will be a Joyal fibrant replacement of the bottom right corner.\\

The next definition corresponds with the notion of orthogonality class in a category, and it is still often called that in the context of 1-categories. However, in order to compare and contrast it with the two definitions above, we will call it here internal localization.

\begin{defn}
Let $\mathcal{C}$ be a category ($\infty$-category), let $C$ be an object and let $f: A \to B$ be a morphism in $\mathcal{C}$. We say that $C$ and $f$ are orthogonal to each other if precomposition with $f$ induces a bijection of sets (homotopy equivalence of spaces)

\begin{center}
$f^{\ast}: \mathcal{C}(B,C) \to \mathcal{C}(A,C)$.
\end{center}
\end{defn}

\begin{defn}
Let $\mathcal{C}$ be a category ($\infty$-category), and let $S$ be a class of morphisms in $\mathcal{C}$. The internal localization of $\mathcal{C}$ at $S$ is the full subcategory $S^{-1} \mathcal{C}$ of all objects $C \in \mathcal{C}$ that are orthogonal to every morphism in $S$. Such objects are sometimes also called $S$-local.
\end{defn}


The notions of external and internal localization share the noun in their name because they pursue the same goal, that is, regarding the morphisms in $S$ as invertible, although they achieve this with two complementary approaches. In the external localization, one adds all the material (and nothing more) that is necessary in order to make the morphisms at issue invertible; in the internal localization, one selects the biggest possible environment in which they are already virtually invertible.\\
In general, these two constructions are not equivalent, and in particular the internal localization doesn't even satisfy any universal property. However, there is at least one case that we will be dealing with in which the two notions coincide and, in particular, the internal localization will also enjoy a universal property. It will be stated at the beginning of \Cref{section 3}.\\

Mostly, we will be interested in internal localizations, and part of our work will be basically investigating under what conditions we can obtain results with the same flavor as \Cref{intextinf} and \Cref{intext}, in particular when they are reflective subcategories. In order to do so, we will frequently resort to the language of model categories, whose purposes are homotopical but whose techniques are considerably more intricate than those of $\infty$-categories. We will therefore need model-categorical definitions of internal localizations, and a way to efficiently transfer information between model categories and $\infty$-categories. The former demand will be met by Bousfield localizations, the latter by the aforementioned external localizations.

\begin{defn}
\hspace{10pt} \newline
\begin{itemize}
\item Let $f: X \to Y$ be a morphism and let $Z$ be an object in a simplicial model category $\mathbf{M}$. We say that $f$ and $Z$ are homotopy orthogonal to each other if for every cofibrant replacement $QX \to QY$ of $f$ and every fibrant replacement $PZ$ of $Z$, the induced morphism

\begin{center}
$\mathbf{M}(QY,PZ) \to \mathbf{M}(QX,PZ)$
\end{center}
is a homotopy equivalence of simplicial sets.

\item Let $\mathbb{S}$ be a class of morphisms in a simplicial model category $\mathbf{M}$. We say that an object $Z \in \mathbf{M}$ is $\mathbb{S}$-local if it is homotopy orthogonal to every morphism in $\mathbb{S}$.

\item In the same setting as above, we say that a morphism $X \to Y$ is an $\mathbb{S}$-local equivalence if it is homotopy orthogonal to every $\mathbb{S}$-local object.
\end{itemize}
\end{defn}

\begin{defn}
Let $\mathbf{M}$ be a simplicial model category, and let $\mathbb{S} \subseteq \mathbf{M}$ be a class of morphisms. We say that a left Bousfield localization of $\mathbf{M}$ at $\mathbb{S}$ is another model category $L_{\mathbb{S}} \mathbf{M}$ having the same underlying category as $\mathbf{M}$, the same cofibrations, and $\mathbb{S}$-local equivalences as weak equivalences.
\end{defn}

\begin{rem}
Note that such a localization does not always exist, because it might fail to give rise to a closed model structure. However, whenever it does exist, it is obviously unique. Moreover, even in that case, it need not be simplicial, in that the new model structure might not satisfy the compatibility conditions with the simplicial one.
\end{rem}

Now, this definition only applies to the case of simplicial model categories. The following definition is more general and moreover it highlights the fact that Bousfield localizations satisfy a universal property.

\begin{defn}
Let $\mathbf{M}$ be a model category (not necessarily simplicial), and let $\mathbb{S}$ be a class of morphisms in $\mathbf{M}$. We say that a left Bousfield localization of $\mathbf{M}$ at $\mathbb{S}$, often denoted by $\mathbf{M} \backslash \mathbb{S}$, is a model structure on the same underlying category as $\mathbf{M}$ having the same cofibrations, and with the following property:

\begin{itemize}
\item $\mathbf{M} \backslash \mathbb{S}$ is universal among model categories $\mathbf{N}$ with a left Quillen functor $F: \mathbf{M} \to \mathbf{N}$ such that $F \circ Q$ takes morphisms in $\mathbb{S}$ to weak equivalences, where $Q$ is a cofibrant replacement.
\end{itemize}
\end{defn}

We conclude this section by briefly presenting, for the sake of completeness, the dual notions to many of the concepts so far introduced. In particular, external localizations do not have an interesting dualization, at least for our purposes.

\begin{defn}
Let $\mathcal{C}$ be a category ($\infty$-category), let $C$ be an object and let $f: A \to B$ be a morphism in $\mathcal{C}$. We say that $C$ and $f$ are co-orthogonal to each other if postcomposition with $f$ induces a bijection of sets (homotopy equivalence of spaces)

\begin{center}
$f_{\ast}: \mathcal{C}(C,A) \to \mathcal{C}(C,B)$.
\end{center}
\end{defn}

\begin{defn}
Let $\mathcal{C}$ be a category ($\infty$-category), and let $S$ be a class of morphisms in $\mathcal{C}$. The colocalization of $\mathcal{C}$ at $S$ is the full subcategory of all objects $C \in \mathcal{C}$ that are co-orthogonal to morphisms in $S$. Such objects are sometimes also called $S$-colocal.
\end{defn}

\begin{defn}
\hspace{10pt} \newline
\begin{itemize}
\item Let $f: X \to Y$ be a morphism and let $Z$ be an object in a simplicial model category $\mathbf{M}$. We say $f$ and $Z$ are homotopy co-orthogonal to each other if, for every fibrant replacement $PX \to PY$ of $f$ and every cofibrant replacement $QZ$ of $Z$, the induced morphism

\begin{center}
$\mathbf{M}(QZ,PX) \to \mathbf{M}(QZ,PY)$
\end{center}
is a homotopy equivalence of simplicial sets.

\item Let $\mathbb{S}$ be a class of morphisms in a simplicial model category $\mathbf{M}$. We say that an object $Z \in \mathbf{M}$ is $\mathbb{S}$-colocal if it is homotopy co-orthogonal to every morphism in $\mathbb{S}$.

\item In the same setting as above, we say that a morphism $X \to Y$ is an $\mathbb{S}$-colocal equivalence if it is homotopy co-orthogonal to every $\mathbb{S}$-colocal object.
\end{itemize}
\end{defn}

\begin{defn}
Let $\mathbf{M}$ be a simplicial model category, and let $\mathbb{S} \subseteq \mathbf{M}$ be a class of morphisms. We say that a right Bousfield localization of $\mathbf{M}$ at $\mathbb{S}$ is another model category $R_{\mathbb{S}} \mathbf{M}$ having the same underlying category as $\mathbf{M}$, the same fibrations, and $\mathbb{S}$-colocal equivalences as weak equivalences.
\end{defn}

\begin{defn}
Let $\mathbf{M}$ be a model category (not necessarily simplicial), and let $\mathbb{S}$ be a class of morphisms in $\mathbf{M}$. We say that a right Bousfield localization of $\mathbf{M}$ at $\mathbb{S}$, often denoted as $\mathbf{M} \slash \mathbb{S}$, is a model structure on the same underlying category as $\mathbf{M}$ having the same fibrations, and with the following property:

\begin{itemize}
\item $\mathbf{M} \slash \mathbb{S}$ is universal among model categories $\mathbf{N}$ with a right Quillen functor $G: \mathbf{N} \to \mathbf{M}$ such that $G \circ P$ takes morphisms in $\mathbb{S}$ to weak equivalences, where $P$ is a fibrant replacement.
\end{itemize}
\end{defn}

In conclusion, we give an additional definition that is relevant for us only in one of its dual forms, due to an asymmetry between the sort of classes with respect to which it is possible to localize or colocalize combinatorial model categories.

\begin{defn} \label{cellularization}
Let $\mathbf{M}$ be a simplicial model category, and let $\mathbf{D}$ be a class of objects in $\mathbf{M}$. We say that a cellularization of $\mathbf{M}$ at $\mathbf{D}$ is a right Bousfield localization of $\mathbf{M}$ at $\mathbb{S}_{\mathbf{D}}$, where $\mathbb{S}_{\mathbf{D}}$ is the class of morphisms that are homotopy co-orthogonal to $\mathbf{D}$.
\end{defn}

\section{Vop\v{e}nka's principle(s)}
The majority of the new results presented in this article rely on the assumption the Vop\v{e}nka's principle, or its weaker version, is true. This section is dedicated to reminding the statements of these principles, and introduce higher categorical formulations of them. In the weaker case, we also provide a proof that the higher categorical formulation is equivalent to the classical one. In the stronger case, the equivalence of the two statements remains an open problem.

\begin{prop}[\cite{LocPresAccCat}, 6.3] \label{vopenka}
The following statements are equivalent:

\begin{enumerate}
\item In a presentable category, there can't be a proper class of objects such that the only morphisms between them are identities;

\item There is no full embedding of the category $\Ord$ in any presentable category;

\item Given objects $(A_i)_{i \in \Ord}$ in a presentable category, there exists a morphism $A_i \to A_j$ for some ordinals $i < j$.
\end{enumerate}
\end{prop}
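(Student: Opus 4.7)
The plan is to prove the three statements equivalent cyclically by contrapositive, namely $\neg(1) \Rightarrow \neg(3) \Rightarrow \neg(2) \Rightarrow \neg(1)$. The free step is $\neg(1) \Rightarrow \neg(3)$: a rigid proper class can be reindexed by $\Ord$ via any bijection, and the resulting family $(A_i)_{i\in\Ord}$ has no non-identity morphisms whatsoever, so certainly none of the form $A_i \to A_j$ with $i<j$, contradicting (3).

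For $\neg(3) \Rightarrow \neg(2)$, starting from a family $(A_i)_{i\in\Ord}$ in a presentable $\mathcal{C}$ with no morphism $A_i \to A_j$ for $i<j$, I would construct a full embedding of $\Ord$ into a presentable category derived from $\mathcal{C}$. A natural ambient is the arrow category $\mathcal{C}^{\to}$, or a suitable subcategory of $\Fun(\Ord,\mathcal{C})$ (inheriting presentability), with the embedding sending $\alpha$ to an object assembled from $\{A_i : i \leq \alpha\}$, for instance the canonical cocone over $\coprod_{i \leq \alpha} A_i$. Canonical inclusions should provide the required unique morphism for each $\alpha \leq \beta$, while the no-forward-morphism hypothesis on $(A_i)$ obstructs the existence of unwanted morphisms in the reverse direction.

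For $\neg(2) \Rightarrow \neg(1)$, starting from a full embedding $F:\Ord \hookrightarrow \mathcal{C}$ I would extract a rigid proper class by a construction that destroys the hom-sets $F(\alpha) \to F(\beta)$. A standard device is to pass to a twisted object such as the pushout $B_\alpha := F(\alpha+1) \sqcup_{F(\alpha)} F(\alpha+1)$, or a cofiber-style variant; fullness of $F$ pins down all morphisms between the $F(\gamma)$, and this ought to force hom-sets between the twisted objects into a rigid pattern after restriction to a cofinal subclass. The presentability of $\mathcal{C}$ enters to guarantee that the colimits used in the twisting exist and have controlled hom-sets.

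The main obstacle I expect is this last step $\neg(2) \Rightarrow \neg(1)$: having a full embedding of $\Ord$ only tells one that hom-sets between the $F(\alpha)$ are singletons or empty, but ensuring that a derived family is genuinely \emph{rigid} (no non-identity morphisms in either direction) requires a careful bookkeeping of the morphisms introduced by any colimit construction. This is precisely the kind of asymmetry between forward and backward morphisms that, later in the paper, will distinguish the weak version of the principle from the strong one, so the construction here cannot be purely formal; it should genuinely use the presentability of $\mathcal{C}$, perhaps via a transfer along a left adjoint to a universal ambient such as $\Grp$.
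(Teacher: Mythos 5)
Your trivial step $\neg(1)\Rightarrow\neg(3)$ is correct, but the other two implications as sketched have genuine gaps, and you cannot avoid those gaps without the key ingredient of the Adámek--Rosický proof: a full embedding of the presentable category into the category of graphs.

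For $\neg(3)\Rightarrow\neg(2)$, the construction $\alpha\mapsto\coprod_{i\le\alpha}A_i$ does not control hom-sets in either direction. Fullness for $\alpha\le\beta$ fails because $\hom(\coprod_{i\le\alpha}A_i,\,\coprod_{j\le\beta}A_j)=\prod_{i\le\alpha}\hom(A_i,\coprod_{j\le\beta}A_j)$, and nothing forces each factor to be a singleton: there may be backward morphisms $A_i\to A_j$ ($j<i$), endomorphisms of the $A_i$, and (crucially) morphisms $A_i\to\coprod_j A_j$ that do not factor through any single summand, since in a general presentable category $\hom(A,\coprod B_j)$ is strictly larger than $\coprod\hom(A,B_j)$. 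The vanishing of forward hom-sets controls only one of many contributions, so the assignment is not even a functor whose hom-sets you can read off. For $\neg(2)\Rightarrow\neg(1)$, the twisted pushout $F(\alpha+1)\sqcup_{F(\alpha)}F(\alpha+1)$ has the analogous problem on the covariant side: $\hom(B_\alpha,B_\beta)$ is a pullback of hom-sets into a pushout, and nothing in the axioms of a presentable category lets you compute maps into a pushout in terms of maps into its legs. You correctly diagnose that presentability must enter in a serious way, but the suggestion of transferring to $\Grp$ is a red herring; the universal ambient the proof actually needs is $\catname{Gra}$, for a concrete combinatorial reason.

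The Adámek--Rosický argument proceeds differently. First, every accessible (in particular every presentable) category admits a full embedding into the category of graphs, i.e.\ into $\Set^G$, landing in the full subcategory of graphs whose structure map $(s,t)\colon E\to V\times V$ is monic. This is the content of \cite{LocPresAccCat}, Theorem 2.65 (quoted in this paper as \Cref{2.65}), and the monicity is essential because it bounds the number of such graphs on a fixed vertex set. With that embedding in hand, one reduces all three statements to combinatorial statements about graphs and proves the equivalence there, where one can explicitly tag graphs with rigid gadgets to switch freely between a rigid class, an $\Ord$-chain, and a family with no forward morphisms. None of this is accessible working purely with colimits inside an abstract presentable $\mathcal{C}$, which is why the paper's open question $(1)'$ singles out precisely the failure (so far) of the monic embedding to generalize to the $\infty$-categorical setting as the obstruction to an $\infty$-version of this proposition. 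Your proposal, by bypassing the graph embedding, omits the one idea that the proof actually turns on.
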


\begin{defn}[$\vp$]
We say that Vop\v{e}nka's principle holds if the equivalent conditions of \Cref{vopenka} are assumed true.
\end{defn}

In particular, condition (2) of \Cref{vopenka} seems the most formal, and therefore the easiest one to transpose to a higher categorical context. Moreover, the following definition of weak Vop\v{e}nka's principle is formulated in a very similar way, which makes using condition (2) as a definition of Vop\v{e}nka's principle even more appealing.

\begin{defn}[$\wvp$]
We say that weak Vop\v{e}nka's principle holds if for any presentable category $\mathcal{K}$ there is no full embedding $\Ord^{op} \hookrightarrow \mathcal{K}$.
\end{defn}

Moreover, there is an apparently more relaxed formulation of weak Vop\v{e}nka's principle, which has recently been proven to be equivalent to it.

\begin{defn}[$\swvp$]
We say that semi-weak Vop\v{e}nka's principle holds if, given any presentable category $\mathcal{K}$, there is no $\Ord$-indexed sequence of objects of $\mathcal{K}$ such that $\mathcal{K}(K_{\alpha},K_{\beta})$ is empty for $\alpha < \beta$ and non-empty for $\alpha \geq \beta$.
\end{defn}

By \Cref{vopenka}, it is clear that $\vp \Rightarrow \swvp$ and, moreover, it is also obvious that $\swvp \Rightarrow \wvp$. Here is the set-theoretical collocation of the three above statements: the consistency of all three of them follows from the assertion that there exists a huge cardinal (\cite{LocPresAccCat}, Theorem A.18) and they imply the assertion that there are arbitrarily large measurable cardinals (\cite{LocPresAccCat}, Corollary A.7). In particular, this also implies that they are independent of ZFC.\\
For what concerns their interactions with one another, the next two results clarify the whole picture of implications between the three statements.

\begin{prop}[\cite{VP-WVP}, 1.4] \label{wvpswvp}
Weak Vop\v{e}nka's principle is equivalent to semi-weak Vop\v{e}nka's principle.
\end{prop}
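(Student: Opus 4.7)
The direction $\swvp \Rightarrow \wvp$ is immediate from the definitions: given a presentable category $\mathcal{K}$ and a full embedding $i: \Ord^{op} \hookrightarrow \mathcal{K}$, the objects $K_\alpha := i(\alpha)$ satisfy $\mathcal{K}(K_\alpha, K_\beta) = \Ord(\beta, \alpha)$, which is a singleton when $\alpha \geq \beta$ and empty otherwise, providing a particularly sharp witness to the failure of $\swvp$. Hence $\swvp$ rules out such an embedding. So the content of the proposition lies in the reverse direction, which I would handle by contrapositive.

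Assume $\neg \swvp$, so that we are given a presentable $\mathcal{K}$ together with a sequence $(K_\alpha)_{\alpha \in \Ord}$ satisfying the nonemptiness--emptiness pattern from the definition. The aim is to manufacture from these data a presentable category $\mathcal{L}$ equipped with a genuine full embedding of $\Ord^{op}$, contradicting $\wvp$. My plan is to rigidify the (potentially many-morphism) hom sets $\mathcal{K}(K_\alpha,K_\beta)$ for $\alpha \geq \beta$ into singletons by promoting each $K_\alpha$ to a decorated object carrying a chosen cone $\{f_{\alpha,\beta}: K_\alpha \to K_\beta\}_{\beta \leq \alpha}$ of witnessing morphisms, and then packaging these decorations inside a suitable comma or diagram category over $\mathcal{K}$. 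Such categories inherit presentability from $\mathcal{K}$ by standard results on diagram and comma constructions over presentable bases, so $\mathcal{L}$ can be kept inside $\lpres$.

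The principal obstacle is the coherent choice of the morphisms $f_{\alpha,\beta}$: nothing in the $\swvp$ hypothesis forces a naive selection to compose correctly, so one does not automatically get a functor $\Ord^{op} \to \mathcal{K}$. The natural way around this is a transfinite recursion exploiting the presentability rank of each $K_\alpha$ to refine the sequence---possibly by passing to a cofinal subsequence---so that at stage $\alpha$ compatible $f_{\alpha,\beta}$ for all $\beta < \alpha$ can be picked out of the nonempty hom sets. Once such a compatible family is in place, the functor $\alpha \mapsto K_\alpha$ becomes honest, and it remains to verify that the morphisms in the decorated category between the promoted objects collapse to a single arrow when $\alpha \geq \beta$ (because any morphism must commute with the extra cone data, and the composite of two rigid arrows is determined by the decorations) and remain empty when $\alpha < \beta$ (which is automatic, since any such morphism would project to one in $\mathcal{K}$). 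Presentability of $\mathcal{L}$ and fullness of the resulting $\Ord^{op} \hookrightarrow \mathcal{L}$ then deliver the desired contradiction with $\wvp$, and the hard step---by a margin---will be engineering the recursive choice so that both coherence of the $f_{\alpha,\beta}$ and accessibility of the decorated category are simultaneously preserved.
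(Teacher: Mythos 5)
The paper does not prove this proposition at all: it is imported verbatim as \cite{VP-WVP}, Theorem 1.4, so there is no proof inside the paper against which your argument can be matched. You correctly dispose of the direction $\swvp \Rightarrow \wvp$, and you correctly identify that the whole difficulty is the converse. However, what you offer for that converse is a plan with two acknowledged holes, and at least one of them is a genuine obstruction rather than a routine verification.

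First, the coherence problem you flag is not just ``the hard step'': nothing in the $\swvp$ hypothesis gives any control over composition of arbitrarily chosen $f_{\alpha,\beta}\in\mathcal{K}(K_\alpha,K_\beta)$, and passing to a cofinal subsequence or invoking presentability ranks does not obviously help, since the obstruction lives in the failure of the diagrams $K_\alpha\to K_\gamma\to K_\beta$ to commute, which is unrelated to the size of the $K_\alpha$. Second, and more seriously, the ``decorated'' category is not actually constructed. An object of the form $\bigl(K_\alpha,(f_{\alpha,\beta})_{\beta\le\alpha}\bigr)$ carries a cone whose index set varies with $\alpha$, so these objects do not live in a single comma or diagram category over $\mathcal{K}$; each one naturally sits over a different slice $\mathcal{K}/\prod_{\beta\le\alpha}K_\beta$, and there is no presentable category uniformly containing them all. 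The appeal to ``standard results on diagram and comma constructions over presentable bases'' therefore does not apply: those results require a fixed small indexing shape, whereas your decorations are indexed over a proper class. Finally, even granting a fixed ambient category, the reduction of hom-sets to singletons would require the cone $(f_{\alpha,\beta})_{\beta\le\alpha}$ to be jointly monic (or a limit cone), which again is not supplied by $\neg\swvp$. As written, the proposal is a plausible-sounding outline, but the engine that is supposed to produce the full embedding is missing, and the shape of the missing piece suggests the route is not viable without a substantially different idea; the original argument in \cite{VP-WVP} does not proceed by rigidifying cone data in this way.
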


\begin{prop}[\cite{VP-WVP}, 1.2]
Weak Vop\v{e}nka's principle is strictly weaker than Vop\v{e}nka's principle, i.e., there is a model of ZFC in which the former but not the latter is satisfied.
\end{prop}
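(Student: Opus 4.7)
The plan is to separate the two principles by their large-cardinal consistency strengths: $\wvp$ turns out to be strictly weaker in consistency than $\vp$, which yields a model of ZFC $+ \wvp + \neg\vp$ starting from any model of ZFC $+ \vp$. The technical heart of \cite{VP-WVP} is a characterization of $\wvp$ (equivalently, by \Cref{wvpswvp}, of $\swvp$) in terms of a much weaker reflection schema: $\wvp$ is equivalent, over ZFC, to the assertion that the class $\Ord$ is Woodin, that is, for every class function $F: \Ord \to \Ord$ there exist an ordinal $\kappa$ closed under $F$ and an elementary embedding $j: V \to M$ with critical point $\kappa$ and $V_{j(F)(\kappa)} \subseteq M$. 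Establishing this equivalence is the first, and hardest, step of the proof.

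Granted this characterization, the separation proceeds by standard large-cardinal absoluteness. Assuming Con(ZFC $+ \vp$), fix a model $V$ and choose an ordinal $\kappa$ large enough that $V_\kappa$ witnesses ``$\Ord$ is Woodin'' from its own point of view, yet strictly below all extendible cardinals of $V$. The first property yields $V_\kappa \models \wvp$ via the equivalence above; the second, together with the fact (recalled after \Cref{vopenka}) that $\vp$ implies the existence of a proper class of extendible cardinals, yields $V_\kappa \not\models \vp$.

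The main obstacle is the equivalence $\wvp \Leftrightarrow$ ``$\Ord$ is Woodin''. The easier direction uses the embeddings witnessing Woodinness to rule out full embeddings $\Ord^{op} \hookrightarrow \mathcal{K}$ into presentable categories, by reflecting enough of a would-be diagram down to some level closed under an appropriately chosen $F$. The harder direction, given a failure of Woodinness for some $F$, explicitly constructs a presentable category $\mathcal{K}$ and a full $\Ord^{op}$-indexed embedding into it that encodes that failure; this is the delicate work carried out in \cite{VP-WVP}, and it is ultimately what produces the strict inequality in the statement.
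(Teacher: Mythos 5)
The paper does not offer a proof of this proposition at all: it is stated as a cited result, pointing the reader to \cite{VP-WVP}, Proposition 1.2. What you have written is therefore a reconstruction of Wilson's argument, and as such it is broadly faithful. The technical core of \cite{VP-WVP} is indeed the provable equivalence of $\wvp$ (equivalently $\swvp$, cf.\ \Cref{wvpswvp}) with the statement ``$\Ord$ is Woodin,'' and the separation from $\vp$ then comes from standard large-cardinal hierarchy, since $\vp$ is equivalent (Magidor, refined by Bagaria) to the existence of $C^{(n)}$-extendible cardinals for every $n$, which sits far above Woodinness. Your description of the two directions of the equivalence — embeddings from Woodinness reflecting away a putative $\Ord^{op}$-diagram, versus a failure of Woodinness being encoded as an explicit full $\Ord^{op}$-embedding into a presentable category — is the right picture of what Wilson does.

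Two small points of precision in the separation step. First, the ordinal $\kappa$ you want is exactly a Woodin cardinal $\delta$: the condition ``$V_\kappa$ witnesses `$\Ord$ is Woodin' from its own point of view'' is precisely the definition of $\kappa$ being Woodin, and it is cleanest to take $\delta$ to be the \emph{least} Woodin cardinal of $V$, so that $V_\delta$ contains no Woodin (hence no extendible) cardinal and $\vp$ visibly fails there. Second, the phrase ``strictly below all extendible cardinals of $V$'' reads awkwardly in a model of $\vp$, where extendibles form a proper class; what you mean is ``below the least extendible,'' which is automatic for the least Woodin. With those adjustments your account matches the structure of the cited argument, so there is no gap, only a proof that the paper itself chose to delegate entirely to the reference.
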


Now we have a complete picture of the 1-dimensional Vop\v{e}nka's principles, and we are ready to introduce the higher dimensional ones, investigating their relations to one another and to their 1-dimensional versions.

\begin{defn}[$\ivp$]
We say that $\infty$-Vop\v{e}nka's principle holds if for any presentable $\infty$-category $\mathcal{K}$ there is no full embedding $\N(\Ord) \hookrightarrow \mathcal{K}$.
\end{defn}

\begin{defn}[$\iwvp$]
We say that weak $\infty$-Vop\v{e}nka's principle holds if for any presentable $\infty$-category $\mathcal{K}$ there is no full embedding $\N(\Ord)^{op} \hookrightarrow \mathcal{K}$.
\end{defn}

\begin{defn}[$\iswvp$]
We say that semi-weak $\infty$-Vop\v{e}nka's principle holds if, given any presentable $\infty$-category $\mathcal{K}$, there is no $\Ord$-indexed sequence of objects of $\mathcal{K}$ such that $\mathcal{K}(K_{\alpha},K_{\beta})$ is empty for $\alpha < \beta$ and non-empty for $\alpha \geq \beta$.
\end{defn}

We will now show that the relations running between these three statements are exactly the same as for their 1-dimensional versions, that is, $\ivp$ is strictly stronger than $\iwvp$ and $\iswvp$ which are equivalent. Moreover, we will also prove that $\iwvp$ is equivalent to $\wvp$. Unfortunately, the question whether $\ivp$ is equivalent to $\vp$ remains open, although it is easily observed that $\ivp \Rightarrow \vp$, since a counterexample to $\vp$ gives rise to a counterexample of $\ivp$ by taking nerves.

\begin{prop} \label{swvpiswvp}
$\swvp$ is equivalent to $\iswvp$.
\end{prop}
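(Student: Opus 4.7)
The plan is to prove the two implications separately; both reduce to routine translations between $\infty$-categorical data and data coming from a suitable simplicial model category. For $\iswvp \Rightarrow \swvp$, I would take a presentable 1-category $\mathcal{K}$ together with a sequence $(K_\alpha)_{\alpha\in\Ord}$ that would violate $\swvp$, and view the same sequence inside the nerve $\N(\mathcal{K})$. The nerve of a locally presentable 1-category is a presentable $\infty$-category, and its mapping spaces are discrete Kan complexes whose $0$-simplex sets are exactly the hom-sets of $\mathcal{K}$; in particular $\Map_{\N(\mathcal{K})}(K_\alpha,K_\beta)$ is empty if and only if $\mathcal{K}(K_\alpha,K_\beta)$ is empty, so the very same sequence violates $\iswvp$.

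For the converse $\swvp \Rightarrow \iswvp$, the key tool is the fact (see \cite{HTT}, A.3.7.6) that every presentable $\infty$-category is equivalent to the $\infty$-category underlying a combinatorial simplicial model category, whose underlying ordinary category is then locally presentable by definition. Given a presentable $\infty$-category $\mathcal{K}$ and a sequence $(K_\alpha)$ witnessing a violation of $\iswvp$, I would fix such a combinatorial simplicial model category $\mathbf{M}$ representing $\mathcal{K}$ and select a fibrant-cofibrant representative $\tilde{K}_\alpha$ in $\mathbf{M}$ for each $\alpha$. Since each $\tilde{K}_\alpha$ is both cofibrant and fibrant, the simplicial mapping space $\Map_\mathbf{M}(\tilde{K}_\alpha,\tilde{K}_\beta)$ is a Kan complex modeling the derived mapping space $\Map_{\mathcal{K}}(K_\alpha,K_\beta)$, and its $0$-simplices coincide with the ordinary hom-set $\mathbf{M}(\tilde{K}_\alpha,\tilde{K}_\beta)$. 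A Kan complex is empty as a space precisely when it has no $0$-simplices, so the emptiness pattern transfers verbatim to the hom-sets of $\mathbf{M}$, producing a violation of $\swvp$ in the presentable 1-category underlying $\mathbf{M}$.

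The one step that requires genuine attention is the compatibility between simplicial model-categorical and $\infty$-categorical mapping spaces, but this is a standard part of the yoga of simplicial model categories. I would also note that the assignment $\alpha \mapsto \tilde{K}_\alpha$ need not be functorial in any sense, since both $\swvp$ and $\iswvp$ are statements only about the existence of morphisms; pointwise fibrant-cofibrant replacement is therefore sufficient, and no coherent diagram needs to be lifted. I do not anticipate any genuinely hard obstacle in carrying out this plan.
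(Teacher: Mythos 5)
Your proposal is correct and follows essentially the same route as the paper's proof: the implication from $\iswvp$ to $\swvp$ via passing to nerves, and the converse via \cite{HTT}, Proposition A.3.7.6, presenting the presentable $\infty$-category by a combinatorial simplicial model category and reading off the emptiness pattern on the underlying hom-sets of bifibrant objects. The only cosmetic difference is that you phrase the second step in terms of explicitly choosing bifibrant representatives, while the paper works directly in $\mathbf{M}^{\circ}$, which amounts to the same thing.
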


\begin{proof}
Since presentability of ordinary categories and inhabitedness of mapping sets are checked on nerves, if $\Ord^{op} \to \mathcal{C}$ is a counterexample of $\swvp$ then the induced $\N(\Ord^{op}) \to \N(\mathcal{C})$ is a counterexample of $\iswvp$, and we have the implication $\iswvp \Rightarrow \swvp$.\\
Coversely, assume that there is a counterexample of $\iswvp$, that is, a presentable $\infty$-category $\mathcal{K}$ containing objects $(K_{\alpha})_{\alpha \in \Ord}$ such that there is at least one morphism $K_{\alpha} \to K_{\beta}$ whenever $\alpha \geq \beta$, and there are no such morphisms when $\alpha < \beta$. By \cite{HTT}, Proposition A.3.7.6, there is a combinatorial simplicial model category $\mathbf{M}$ such that $\N_{\Delta}(\mathbf{M}^{\circ}) \simeq \mathcal{K}$. In particular, the objects $(K_{\alpha})_{\alpha \in \Ord}$ can be regarded as objects of $\mathbf{M}^{\circ}$, with the property above. Forgetting both the simplicial and the model structure on $\mathbf{M}$ gives a presentable category $\mathcal{M}$ containing objects $(K_{\alpha})$ still having the same property, which contradicts $\swvp$ and thus concludes our proof.
\end{proof}

\begin{prop} \label{wvpiwvp}
$\wvp$ is equivalent to $\iwvp$.
\end{prop}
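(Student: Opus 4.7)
The plan is to close the circle using the equivalences already established, without attempting any direct reduction of an $\infty$-categorical embedding to a 1-categorical one.

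For the easy direction $\iwvp \Rightarrow \wvp$, I would transfer counterexamples through the nerve. Concretely, given a full embedding $\Ord^{op} \hookrightarrow \mathcal{C}$ into a presentable 1-category $\mathcal{C}$, applying $\N$ yields a full embedding $\N(\Ord)^{op} = \N(\Ord^{op}) \hookrightarrow \N(\mathcal{C})$; since $\N$ is fully faithful on 1-categories and carries presentable 1-categories to presentable $\infty$-categories, this contradicts $\iwvp$.

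For the converse $\wvp \Rightarrow \iwvp$, I would route through the semi-weak versions. Concatenating \Cref{wvpswvp} and \Cref{swvpiswvp} gives $\wvp \Leftrightarrow \swvp \Leftrightarrow \iswvp$, so it is enough to deduce $\iwvp$ from $\iswvp$. This last step is immediate from the definitions: a full embedding $\N(\Ord)^{op} \hookrightarrow \mathcal{K}$ produces objects $(K_\alpha)_{\alpha \in \Ord}$ in the presentable $\infty$-category $\mathcal{K}$ whose mapping spaces $\mathcal{K}(K_\alpha,K_\beta)$ are equivalent to the discrete hom-sets $\Ord^{op}(\alpha,\beta)$, hence contractible when $\alpha \geq \beta$ and empty when $\alpha < \beta$, which is precisely a counterexample to $\iswvp$.

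I do not anticipate any real obstacle here: all the heavy lifting has been absorbed into \Cref{wvpswvp} and \Cref{swvpiswvp}, and what remains is formal. The one standard fact worth invoking explicitly is that the nerve of a presentable 1-category is a presentable $\infty$-category, which legitimizes the target of the embedding produced in the first direction.
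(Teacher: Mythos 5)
Your proof is correct and follows essentially the same route as the paper: the easy direction is by taking nerves, and the converse is obtained by chaining $\wvp \Leftrightarrow \swvp$ (\Cref{wvpswvp}), $\swvp \Leftrightarrow \iswvp$ (\Cref{swvpiswvp}), and the trivial observation that a counterexample to $\iwvp$ is already a counterexample to $\iswvp$. The only cosmetic difference is that the paper phrases the last step contrapositively rather than spelling out the mapping spaces.
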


\begin{proof}
As above, a counterexample to $\wvp$ gives rise to a counterexample to $\iwvp$ by taking nerves.\\
Conversely, suppose there is a counterexample to $\iwvp$. In particular, it also contradicts $\iswvp$, therefore by \Cref{swvpiswvp}, there is a counterexample to $\swvp$. Finally, by \Cref{wvpswvp} we can fabricate a counterexample to $\wvp$, which concludes our proof.
\end{proof}

\begin{cor} \label{iwvpiswvp}
$\iwvp$ is equivalent to $\iswvp$.
\end{cor}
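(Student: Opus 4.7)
The plan is to combine the three equivalences that have already been established. One direction is essentially immediate: if $\iwvp$ fails, there is a full embedding $\N(\Ord)^{op} \hookrightarrow \mathcal{K}$ into some presentable $\infty$-category. Since $\Ord^{op}$ is a poset, the mapping spaces in $\N(\Ord)^{op}$ between two objects $\alpha,\beta$ are either empty (if $\alpha < \beta$) or contractible (if $\alpha \geq \beta$). Fullness of the embedding then yields exactly an $\Ord$-indexed family of objects with $\mathcal{K}(K_\alpha, K_\beta)$ empty for $\alpha < \beta$ and (non-empty, in fact contractible) for $\alpha \geq \beta$, contradicting $\iswvp$. Thus $\iswvp \Rightarrow \iwvp$.

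For the converse $\iwvp \Rightarrow \iswvp$, the strategy is to cycle through the 1-categorical statements via the bridges that have already been built. Concretely, I would argue by contrapositive: a counterexample to $\iswvp$ contradicts $\swvp$ by \Cref{swvpiswvp}, which contradicts $\wvp$ by \Cref{wvpswvp}, which contradicts $\iwvp$ by \Cref{wvpiwvp}. Chaining these three equivalences gives the desired implication, and so the corollary follows.

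There is no real obstacle here, as all the substantive work has been done in \Cref{swvpiswvp}, \Cref{wvpiwvp}, and (quoted from \cite{VP-WVP}) \Cref{wvpswvp}; the corollary is literally the composite of these three biconditionals. The only thing worth flagging is that, in contrast to the stronger principles, the passage between the 1-categorical and $\infty$-categorical versions here is painless precisely because $\iswvp$ only records the presence or absence of morphisms (an invariant unchanged by passage to a model or by forgetting simplicial/model structure), so the combinatorial-model-category machinery invoked in \Cref{swvpiswvp} suffices.
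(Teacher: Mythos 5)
Your proof is correct and matches the paper's intended argument: the corollary is exactly the composite of the three established biconditionals $\iwvp \Leftrightarrow \wvp \Leftrightarrow \swvp \Leftrightarrow \iswvp$, and the paper offers no separate proof precisely because this chaining is immediate. Your extra direct argument for $\iswvp \Rightarrow \iwvp$ (reading off that a full embedding of $\N(\Ord)^{op}$ gives an $\Ord$-indexed sequence with the prescribed emptiness/non-emptiness pattern of mapping spaces) is a sound shortcut, mirroring the paper's earlier remark that $\swvp \Rightarrow \wvp$ is obvious, but it is not needed once you invoke the chain of equivalences.
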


Just for the purpose of recording, we also state the following trivial result.

\begin{prop} \label{vpivp}
$\ivp$ implies $\vp$.
\end{prop}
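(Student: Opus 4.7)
The plan is to prove the contrapositive: given a counterexample to $\vp$, I will produce a counterexample to $\ivp$ by passing to nerves, exactly as hinted in the paragraph preceding the statement.

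First I would unpack the failure of $\vp$ via condition (2) of \Cref{vopenka}: there exists a presentable 1-category $\mathcal{K}$ together with a full embedding $F: \Ord \hookrightarrow \mathcal{K}$. Applying the nerve functor yields a map of simplicial sets $\N(F): \N(\Ord) \to \N(\mathcal{K})$. Since the nerve is fully faithful as a functor from 1-categories to $\infty$-categories (it computes the mapping sets on the nose and these coincide with the discrete mapping spaces of $\N(\mathcal{K})$), the induced functor $\N(F)$ is fully faithful as a functor of $\infty$-categories. Thus it is a full embedding $\N(\Ord) \hookrightarrow \N(\mathcal{K})$ in the sense required by the definition of $\ivp$.

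The one remaining verification is that $\N(\mathcal{K})$ is a presentable $\infty$-category whenever $\mathcal{K}$ is a presentable 1-category. This is standard, and the cleanest reference is \cite{HTT}, where it is shown that $\N$ sends locally presentable 1-categories to presentable $\infty$-categories (nerves preserve small limits, and the $\kappa$-compact objects of $\mathcal{K}$ remain $\kappa$-compact in $\N(\mathcal{K})$ since all mapping spaces are discrete, so the characterization of presentability as $\Ind_\kappa$ of a small $\infty$-category applied to the small generator of $\mathcal{K}$ transfers directly).

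I do not expect any real obstacle here; the proposition is essentially a bookkeeping remark recording that $\ivp$, being formulated with the strictly larger class of presentable $\infty$-categories (which contains all nerves of presentable 1-categories), immediately subsumes $\vp$. The only point requiring any care is the presentability of $\N(\mathcal{K})$, and for that the cited result in \cite{HTT} suffices.
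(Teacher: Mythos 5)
Your proof is correct and matches the paper's own approach: the paper gives no proof of \Cref{vpivp} beyond the remark in the preceding paragraph that a counterexample to $\vp$ yields a counterexample to $\ivp$ by taking nerves, which is exactly the contrapositive argument you spell out. The supporting details you fill in (full faithfulness of $\N$ and presentability of $\N(\mathcal{K})$) are standard and correctly cited.
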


\begin{cor}
$\ivp$ implies $\iwvp$.
\end{cor}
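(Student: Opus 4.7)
The proof is essentially a chain of implications, most of which have already been established in the text, so the plan is to assemble them in the right order rather than to do new work. First I would invoke \Cref{vpivp} to pass from $\ivp$ to the 1-categorical $\vp$. Then I would use the fact, observed just before \Cref{wvpswvp}, that in the 1-categorical setting $\vp \Rightarrow \swvp \Rightarrow \wvp$ (the first implication follows from condition (2) in \Cref{vopenka} together with the definition of $\swvp$, the second is immediate from the definitions since a full embedding $\Ord^{op} \hookrightarrow \mathcal{K}$ yields in particular the kind of $\Ord$-indexed sequence that $\swvp$ forbids). Finally, I would apply \Cref{wvpiwvp} to convert $\wvp$ back into $\iwvp$.

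Putting this together, the implication reads
\[
\ivp \xRightarrow{\text{\Cref{vpivp}}} \vp \Rightarrow \swvp \Rightarrow \wvp \xLeftrightarrow{\text{\Cref{wvpiwvp}}} \iwvp,
\]
so the corollary follows.

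There is no real obstacle here; the only thing to be careful about is not to attempt a direct argument (for instance, trying to extract an embedding $\N(\Ord) \hookrightarrow \mathcal{K}$ from an embedding $\N(\Ord)^{op} \hookrightarrow \mathcal{K}$), since this is exactly the inequivalence between $\vp$ and $\wvp$ that \cite{VP-WVP} establishes at the 1-categorical level and would make the statement false in the opposite direction. The proof must therefore route through the already-proven equivalence $\wvp \Leftrightarrow \iwvp$ rather than through a direct higher-categorical construction.
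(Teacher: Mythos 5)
Your proof is correct and matches the route the paper implies: since the corollary is placed immediately after \Cref{vpivp}, the intended argument is exactly the chain $\ivp \Rightarrow \vp \Rightarrow \swvp \Rightarrow \wvp \Leftrightarrow \iwvp$, using the remark following the definitions that $\vp \Rightarrow \swvp \Rightarrow \wvp$ together with \Cref{wvpiwvp}. Your closing caution against attempting a direct $\infty$-categorical conversion between $\N(\Ord)^{op}$ and $\N(\Ord)$ embeddings is sensible, although the cleanest reason the naive ``take opposites'' approach fails is simply that the opposite of a presentable $\infty$-category is not in general presentable, rather than any appeal to \cite{VP-WVP}.
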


Although it is not know if \Cref{vpivp} admits a converse, the formulation of $\vp$ given in \Cref{vopenka}(1) does have an equivalent $\infty$-categorical statement. We will now show this.

\begin{lem} \label{altvopenka}
The following statements are equivalent:

\begin{enumerate}
\item In a presentable category, there can't be a proper class of objects such that the only morphisms between them are identities;
\item In a presentable category, there can't be a proper class of objects such that the only morphisms between them are endomorphisms.
\end{enumerate}
\end{lem}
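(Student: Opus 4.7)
The plan is to piggyback on the equivalence of conditions (1) and (3) already provided by \Cref{vopenka}: condition (3) asserts that any $\Ord$-indexed family of objects in a presentable category admits a morphism $A_i \to A_j$ for some $i < j$, which is tantalizingly close to what (2) forbids.

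The direction $(2) \Rightarrow (1)$ needs no real work: a proper class of objects connected only by identity morphisms is in particular one connected only by endomorphisms, so any configuration violating (1) would automatically violate (2). Contrapositively, (2) implies (1).

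For $(1) \Rightarrow (2)$, I would argue by contradiction. Suppose some presentable category $\mathcal{K}$ contains a proper class of distinct objects whose only morphisms between each other are endomorphisms; reindex them as $(A_\alpha)_{\alpha \in \Ord}$. Then by hypothesis $\mathcal{K}(A_\alpha, A_\beta) = \emptyset$ whenever $\alpha \neq \beta$, since any such morphism would have to be an endomorphism and thus force $A_\alpha = A_\beta$. Now invoking condition (3) of \Cref{vopenka} (which is equivalent to the standing assumption (1)), one produces ordinals $\alpha < \beta$ together with a morphism $A_\alpha \to A_\beta$, contradicting what was just observed.

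No genuine obstacle should arise here: all the difficulty of this kind of statement is concentrated in the equivalence $(1) \Leftrightarrow (3)$, which is being used as a black box. The only mild point to mention is that to apply (3) one needs an honest $\Ord$-indexing, but reindexing a proper class as an $\Ord$-sequence is routine in NBG, so nothing is lost in the reduction.
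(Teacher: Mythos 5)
Your proof is correct and is precisely the argument the paper has in mind: the paper's one-line proof just cites the equivalence $(1) \Leftrightarrow (3)$ of \Cref{vopenka}, and your writeup fills in the routine details (the trivial direction $(2) \Rightarrow (1)$, and the reduction of $(1) \Rightarrow (2)$ to condition (3) via the observation that distinct objects with only endomorphisms between them have empty mapping sets off the diagonal).
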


\begin{proof}
This follows from the equivalence $(1) \Leftrightarrow (3)$ of \Cref{vopenka}.
\end{proof}

\begin{prop} \label{altvopenkaivopenka}
The following statements are equivalent:

\begin{enumerate}
\item In a presentable category $\mathcal{C}$, there can't be a proper class of objects in which $\mathcal{C}(X,Y)$ is empty when $X \neq Y$ and a singleton when $X=Y$;
\item In a presentable $\infty$-category $\mathcal{K}$, there can't be a proper class of objects in which $\mathcal{K}(X,Y)$ is empty when $X \neq Y$ and contractible when $X=Y$.
\end{enumerate}
\end{prop}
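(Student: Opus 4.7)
The plan is to adapt the template used in \Cref{swvpiswvp}. For the easy direction $(2) \Rightarrow (1)$, I would proceed by nerving: any proper class of objects $(X_\alpha)$ in a presentable category $\mathcal{C}$ witnessing the negation of (1) yields an analogous family in the presentable $\infty$-category $\N(\mathcal{C})$, whose mapping spaces are the (discrete) hom-sets of $\mathcal{C}$; singletons become contractible spaces and empty sets remain empty, so (2) is violated as well.

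For $(1) \Rightarrow (2)$, I would argue contrapositively. Assuming a presentable $\infty$-category $\mathcal{K}$ admits a proper class $(K_\alpha)_{\alpha \in \Ord}$ with $\Map_{\mathcal{K}}(K_\alpha, K_\beta) = \emptyset$ for $\alpha \neq \beta$ and contractible for $\alpha = \beta$, I would invoke \cite{HTT}, Proposition A.3.7.6 to write $\mathcal{K} \simeq \N_{\Delta}(\mathbf{M}^{\circ})$ for some combinatorial simplicial model category $\mathbf{M}$ and realize the $K_\alpha$ as fibrant-cofibrant objects with the same simplicial mapping spaces. Forgetting both structures leaves a presentable 1-category $\mathcal{M}$ still containing the $K_\alpha$ as a proper class, and since hom-sets in $\mathcal{M}$ are the $0$-simplices of the simplicial mapping spaces, we have $\mathcal{M}(K_\alpha, K_\beta) = \emptyset$ for all $\alpha \neq \beta$.

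The main subtlety, and where I would not try to close the argument naively, is that the endomorphism sets $\mathcal{M}(K_\alpha, K_\alpha)$ need not be singletons, because a contractible simplicial set can have many $0$-simplices; so the family in $\mathcal{M}$ does not literally witness the negation of (1) in the form stated. I would sidestep this by routing through the equivalent reformulation \Cref{vopenka}(3) instead: the family $(K_\alpha)$ in the presentable category $\mathcal{M}$ admits no morphism $K_i \to K_j$ for any $i < j$, which directly contradicts (3), and hence (1). No further genuine obstacle is expected.
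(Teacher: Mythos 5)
Your proof is correct and follows essentially the same path as the paper's. The easy direction via nerving is identical, the backward direction via \cite{HTT}, Proposition A.3.7.6 and forgetting both structures is identical, and you correctly flag the genuine subtlety that the paper had to engineer around, namely that the $0$-simplices of a contractible mapping space need not form a singleton. The only organizational difference is that you close the argument by invoking \Cref{vopenka}$(3)$ directly, whereas the paper packages the same observation into \Cref{altvopenka}$(2)$ (``only endomorphisms'') and then appeals to that lemma; since \Cref{altvopenka} is itself deduced from the $(1)\Leftrightarrow(3)$ equivalence in \Cref{vopenka}, the two routes are the same argument with the intermediate lemma inlined.
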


\begin{proof}
A counterexample to (1) immediately gives a counterexample to (2) by taking the nerve.\\
Suppose we have a counterexample to (2), that is, we are given a presentable $\infty$-category $\mathcal{K}$ with a proper class of objects in which $\mathcal{K}(X,Y)$ is empty whenever $X \neq Y$ and a contractible space whenever $X=Y$. By \cite{HTT}, Proposition A.3.7.6, there is a combinatorial simplicial model category $\mathbf{M}$ such that $\N_{\Delta}(\mathbf{M}^{\circ}) \simeq \mathcal{K}$. In particular, we have a proper class of objects $(K_i)$ in $\mathbf{M}^{\circ}$ such that $\mathbf{M}(X,Y)$ is empty whenever $X \neq Y$. Forgetting both the simplicial and the model structure on $\mathbf{M}$ gives a presentable category $\mathcal{M}$ having the same property, which contradicts \Cref{altvopenka}(2), therefore also contradicts \Cref{altvopenka}(1), which is the same as (1) in the present proof.
\end{proof}

\section{Internal localizations under Vop\v{e}nka's principle(s)} \label{section 3}
In this section, we will review some results that are already known concerning internal localizations and colocalizations and Bousfield localizations.

\begin{prop}[\cite{LocPresAccCat}, 1.39, 1.40] \label{smallorthogonalityreflective}
Let $S$ be a small set of morphisms in a presentable category $\mathcal{K}$. Then the internal localization of $\mathcal{K}$ at $S$ is a reflective subcategory and it is itself presentable.
\end{prop}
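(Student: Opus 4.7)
The plan is to construct a reflector from $\mathcal{K}$ into $S^{-1} \mathcal{K}$ by a transfinite small object argument. Since $\mathcal{K}$ is presentable, it is locally $\lambda$-presentable for some regular cardinal $\lambda$; enlarging $\lambda$ if necessary, I would assume that every domain and codomain of a morphism in $S$ is $\lambda$-presentable.

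First, for each $X \in \mathcal{K}$ I would construct $\eta_X : X \to r(X)$ by transfinite recursion. Set $X_0 = X$; at successor stages, form the pushout of the canonical span
\[
X_\alpha \longleftarrow \coprod_{s \in S,\, f: A \to X_\alpha} A \longrightarrow \coprod_{s \in S,\, f: A \to X_\alpha} B,
\]
taking $X_{\alpha+1}$ to be the resulting object; at limit stages take the colimit; and set $r(X) = X_\lambda$.

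Then I would verify that $r(X)$ is $S$-local using $\lambda$-presentability: a morphism $A \to r(X)$ factors through some $X_\alpha$ with $\alpha < \lambda$, so the corresponding lifting problem is solved at stage $\alpha + 1$. For uniqueness of the lift I would rerun the same argument against the codiagonal $B \sqcup_A B \to B$. The universal property of $r$ as a reflection is then immediate: given any morphism $X \to Y$ with $Y$ being $S$-local, orthogonality allows it to extend uniquely over each pushout step $X_\alpha \to X_{\alpha+1}$, hence uniquely to $r(X)$.

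Finally, for presentability of $S^{-1} \mathcal{K}$ I would invoke the standard criterion that a reflective subcategory of a presentable category is itself presentable provided the reflector preserves $\mu$-filtered colimits for some regular $\mu$. Here this holds for any $\mu \geq \lambda$, because $\lambda$-filtered colimits commute with $\mathcal{K}(A,-)$ whenever $A$ is $\lambda$-presentable, so $S$-locality is closed under $\lambda$-filtered colimits, which is equivalent to the reflector preserving them. The main obstacle is the first step: controlling the transfinite construction so that the result is actually $S$-local at stage $\lambda$, which is the technical heart of the small object argument and depends crucially on the presentability hypotheses on the domains and codomains of $S$.
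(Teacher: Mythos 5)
Your construction is essentially the standard orthogonal-reflection argument from Adámek and Rosický, which is exactly what the paper cites here (it gives no proof of its own), so the overall strategy matches the source. Two points deserve care. First, when you say ``for uniqueness of the lift I would rerun the same argument against the codiagonal $B \sqcup_A B \to B$,'' the codiagonals must be adjoined to the generating set \emph{before} the transfinite recursion starts, not handled as a second pass afterward: pushing out along codiagonals after the fact need not preserve the lifting property already manufactured against $S$. Since $B \sqcup_A B$ is a finite colimit of $\lambda$-presentable objects, the enlarged set still consists of maps between $\lambda$-presentable objects, so the same termination argument at stage $\lambda$ applies unchanged. Second, the presentability step invokes ``the reflector preserving $\mu$-filtered colimits,'' but the reflector is a left adjoint and preserves all colimits automatically; what your observation about $\mathcal{K}(A,-)$ commuting with $\lambda$-filtered colimits actually delivers is that $S$-local objects are closed under $\lambda$-filtered colimits computed in $\mathcal{K}$, i.e., the \emph{inclusion} $S^{-1}\mathcal{K} \hookrightarrow \mathcal{K}$ preserves them, making $S^{-1}\mathcal{K}$ an accessibly embedded reflective full subcategory and hence locally presentable. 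With those two repairs the argument is the intended one.
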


\begin{prop}[\cite{HTT}, 5.5.4.15] \label{infsmallorthogonalityreflective}
Let $\mathcal{C}$ be a presentable $\infty$-category, and let $S$ be a small set of morphisms in $\mathcal{C}$. Then the internal localization $S^{-1} \mathcal{C}$ is a reflective subcategory, and moreover it is itself presentable.
\end{prop}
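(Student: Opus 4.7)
The plan is to imitate the strategy already employed in \Cref{swvpiswvp} and \Cref{altvopenkaivopenka}, passing through the world of combinatorial simplicial model categories via \cite{HTT}, Proposition A.3.7.6. Concretely, I would choose a combinatorial simplicial model category $\mathbf{M}$ with $\N_{\Delta}(\mathbf{M}^{\circ}) \simeq \mathcal{C}$ and, using functorial cofibrant replacement followed by factorization, lift each morphism of $S$ to a cofibration between cofibrant objects of $\mathbf{M}$; call the resulting small set $\mathbb{S}$.

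The next step is to invoke the existence theorem for left Bousfield localizations of combinatorial simplicial model categories at a small set of morphisms (due essentially to J.~Smith, see for instance Barwick's treatment in \emph{On left and right model categories and left and right Bousfield localizations}) to obtain a combinatorial simplicial model category $L_{\mathbb{S}} \mathbf{M}$. Its fibrant objects are exactly the $\mathbb{S}$-local fibrant objects of $\mathbf{M}$, and the identity on underlying categories gives a left Quillen functor $\mathbf{M} \to L_{\mathbb{S}} \mathbf{M}$ whose right adjoint is a fully faithful inclusion at the level of homotopy categories. Passing to underlying $\infty$-categories, this produces a reflective adjunction between $\mathcal{C}$ and $\N_{\Delta}((L_{\mathbb{S}} \mathbf{M})^{\circ})$, while the latter is again presentable by the converse direction of \cite{HTT}, Proposition A.3.7.6.

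To close the argument one must identify $\N_{\Delta}((L_{\mathbb{S}} \mathbf{M})^{\circ})$, viewed as a reflective subcategory of $\mathcal{C}$, with the internal localization $S^{-1} \mathcal{C}$ as defined earlier in the paper. This is the step I expect to be the main obstacle: it amounts to comparing the model-categorical notion of $\mathbb{S}$-locality (defined through derived simplicial mapping complexes) with the $\infty$-categorical notion of $S$-locality (defined through orthogonality of mapping spaces in $\mathcal{C}$). The comparison is a routine but delicate unwinding that rests on the standard fact that the simplicial mapping spaces between bifibrant objects in $\mathbf{M}$ compute, up to homotopy equivalence, the mapping spaces in $\mathcal{C}$, and that $\mathbb{S}$ was obtained by lifting $S$ between cofibrant objects; once this identification is in place, both the reflectivity of $S^{-1} \mathcal{C}$ in $\mathcal{C}$ and its presentability follow at once from the previous paragraph.
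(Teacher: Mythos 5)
The paper does not give its own proof of this proposition: it is cited directly from \cite{HTT}, Proposition 5.5.4.15, where Lurie proves it by an argument intrinsic to $\infty$-categories (passing to the strongly saturated class $\bar{S}$ generated by $S$, showing it is of small generation, and running an $\infty$-categorical small object argument to produce the localization functor). Your proposal instead detours through combinatorial simplicial model categories and left Bousfield localization, which is a genuinely different route. It is, however, essentially identical to the strategy the paper itself deploys later in the proof of \Cref{reflectivethensmallorth}, which is the generalization of the present statement to arbitrary \emph{classes} of morphisms under $\vp$: pick $\mathbf{M}$ with $\N_{\Delta}(\mathbf{M}^{\circ}) \simeq \mathcal{C}$, transfer $S$ to a set $\mathbb{S}$ of morphisms between bifibrant objects, invoke the Bousfield localization theorem, pull the resulting simplicial Quillen adjunction through $\N_{\Delta}$ to get a reflective adjunction, get presentability from \cite{HTT} A.3.7.6 in the other direction, and then reconcile $\mathbb{S}$-locality with $S$-locality by inspecting mapping spaces. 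What Lurie's intrinsic argument buys is avoidance of model-categorical machinery altogether; what your route buys is uniformity with the proof of the $\vp$-assisted class version, where no intrinsic argument is currently available.

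One point your sketch leaves implicit that should be made explicit: the existence theorem for left Bousfield localization of a combinatorial simplicial model category at a \emph{set} of maps (\Cref{hirschhornleft}, i.e.\ Hirschhorn 4.1.1, or Smith's theorem) requires $\mathbf{M}$ to be left proper. This is not automatic from \cite{HTT} A.3.7.6 as stated, but as the paper observes in the proof of \Cref{reflectivethensmallorth}, if one follows Lurie's construction and uses the injective model structure one can always arrange for $\mathbf{M}$ to be left proper, so the gap is harmless once flagged. Your additional step of factoring each transferred morphism into a cofibration between cofibrant objects is unnecessary (the transferred morphisms already have bifibrant source and target, and Bousfield localization only sees them up to homotopy) but does no harm.
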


The following two propositions express some interesting interaction between the internal and external localizations taken at the same class of morphisms.

\begin{prop}[First half of the proof of \cite{HTT}, 5.5.4.20] \label{intextinf}
Let $\mathcal{C}$ be a presentable $\infty$-category, and let $S$ be a small set of morphisms in $\mathcal{C}$. Then the internal localization of $\mathcal{C}$ at $S$ satisfies the following universal property: for every $\infty$-category $\mathcal{D}$, the reflection functor $\mathcal{C} \to S^{-1} \mathcal{C}$ induces an equivalence of $\infty$-categories

\begin{center}
$\Fun^L(S^{-1} \mathcal{C}, \mathcal{D}) \to \Fun^L_S (\mathcal{C}, \mathcal{D})$
\end{center}

where the superscript $L$ restricts to colimit-preserving functors, and the subscript $S$ restricts to functors sending all elements of $S$ to equivalences.
\end{prop}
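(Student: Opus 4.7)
The plan is to identify the two sides by exhibiting an inverse to the precomposition functor
\[
(-\circ L)\colon \Fun^L(S^{-1}\mathcal{C},\mathcal{D}) \longrightarrow \Fun^L_S(\mathcal{C},\mathcal{D}),
\]
where $L\colon \mathcal{C}\to S^{-1}\mathcal{C}$ is the reflection supplied by \Cref{infsmallorthogonalityreflective}, left adjoint to the fully faithful inclusion $\iota$. That this precomposition is well-defined is immediate: $L$ preserves colimits as a left adjoint, and for each $s\in S$ the unit map $s\to \iota L(s)$ becomes an equivalence on both sides (the target is already $S$-local), forcing $L(s)$ to be an equivalence.

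The candidate inverse is restriction along $\iota$, sending $G\mapsto G\circ\iota$. The crucial step is the claim that \emph{every} $G\in\Fun^L_S(\mathcal{C},\mathcal{D})$ inverts the unit $\eta\colon \id_{\mathcal{C}}\Rightarrow \iota L$ componentwise, yielding a natural equivalence $G\simeq (G\iota)\circ L$. To prove this I would argue that the class $W_G$ of morphisms of $\mathcal{C}$ that $G$ sends to equivalences in $\mathcal{D}$ is strongly saturated: it contains equivalences, satisfies the 2-out-of-3 property, and is stable under small colimits in $\Fun(\Delta^1,\mathcal{C})$ precisely because $G$ preserves colimits. Since $W_G$ contains $S$ by hypothesis, and since by \cite{HTT}, 5.5.4.15 (together with its accompanying discussion) the strongly saturated class generated by $S$ coincides with the class of morphisms inverted by $L$, each component $\eta_X$ lies in $W_G$.

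With the claim in place, both round-trip composites are natural equivalences: starting from $F\in\Fun^L(S^{-1}\mathcal{C},\mathcal{D})$, one obtains $F\circ L\circ \iota \simeq F$ because the counit $L\iota\Rightarrow\id_{S^{-1}\mathcal{C}}$ of the reflective adjunction is invertible (the inclusion $\iota$ is fully faithful); starting from $G\in\Fun^L_S(\mathcal{C},\mathcal{D})$, the claim gives $(G\iota)\circ L\simeq G$. Coherence and functoriality of these equivalences follow formally from the adjunction $L\dashv\iota$. A short additional check is that $G\circ\iota$ really does preserve colimits, which follows from the identification $G\simeq(G\iota)\circ L$ and the fact that colimits in the reflective subcategory $S^{-1}\mathcal{C}$ are computed as $L$ applied to the colimit in $\mathcal{C}$.

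The main obstacle is the strong saturation step: extending inversion of $S$ to inversion of all $S$-local equivalences under the sole hypothesis of colimit preservation. Once this is granted via \cite{HTT}, 5.5.4.15, everything else reduces to routine bookkeeping with the reflective adjunction $L\dashv\iota$.
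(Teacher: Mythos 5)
Your proof is correct and in substance the same as the proof the paper points to, namely the first half of Lurie's proof of HTT 5.5.4.20 (with the corrections noted in the remark that follows the proposition): the decisive ingredient in both is that the class of morphisms inverted by a colimit-preserving functor is strongly saturated, so inverting $S$ forces inverting its strongly saturated closure, which by HTT 5.5.4.16 is exactly the class of $L$-equivalences, hence includes all the unit maps $\eta_X$. The only cosmetic difference is that you package full faithfulness and essential surjectivity at once as an explicit two-sided inverse given by restriction along $\iota$, whereas Lurie first establishes full faithfulness of the unrestricted precomposition $\Fun(S^{-1}\mathcal{C},\mathcal{D})\to\Fun(\mathcal{C},\mathcal{D})$ and then identifies the essential image among colimit-preserving functors.
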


\begin{rem}
There are two minor mistakes in the proof of the cited Proposition 5.5.4.20 of \cite{HTT}. Firstly, the reasoning there only lets us conclude that $\bar{\eta}: \Fun(S^{-1}\mathcal{C}, \mathcal{D}) \to \Fun(\mathcal{C}, \mathcal{D})$ (appearing in the mentioned proof) is fully faithful, rather than an equivalence. However, this is entirely enough for our and Lurie's purpose.\\
Secondly, we need to use Proposition 5.5.4.16 at the end of the proof, instead of 5.5.4.11, which is the reason why we are only looking at colimit-preserving functors.
\end{rem}

\begin{prop}
Let $\mathcal{C}$ be a presentable $\infty$-category, and let $S$ be a small set of morphisms in $\mathcal{C}$. Given the reflection $L: \mathcal{C} \to S^{-1} \mathcal{C}$, let $\bar{S}$ denote the class of morphisms that are taken to equivalences by $L$. Then the internal localization of $\mathcal{C}$ at $\bar{S}$ satisfies the universal property of the corresponding external localization, i.e.

\begin{center}
$\Fun(S^{-1} \mathcal{C}, \mathcal{D}) \simeq \Fun_{\bar{S}} (\mathcal{C}, \mathcal{D})$
\end{center}

for every $\infty$-category $\mathcal{D}$. In particular, there is an equivalence of $\infty$-categories

\begin{center}
$S^{-1} \mathcal{C} \simeq \mathcal{C}[\bar{S}^{-1}]$.
\end{center}
\end{prop}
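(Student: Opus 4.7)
The plan is to exhibit the reflection $L\colon \mathcal{C}\to S^{-1}\mathcal{C}$ as an external localization of $\mathcal{C}$ at $\bar S$ by writing down an explicit inverse to precomposition with $L$, using the adjunction $L\dashv i$ furnished by \Cref{infsmallorthogonalityreflective}.

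First, I would justify the notation by observing that the internal localization of $\mathcal{C}$ at $\bar S$ actually equals $S^{-1}\mathcal{C}$. One inclusion is immediate from $S\subseteq\bar S$. For the converse, if $X\in S^{-1}\mathcal{C}$ then $X\simeq iLX$ by reflectivity, so for any $A\in\mathcal{C}$ one has $\Map(A,X)\simeq \Map(LA, LX)$ by adjunction; if $f\in\bar S$ then $Lf$ is an equivalence, hence so is $f^{*}\colon \Map(B,X)\to\Map(A,X)$, showing that $X$ is $\bar S$-local.

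Next, $L$ sends every morphism of $\bar S$ to an equivalence by the very definition of $\bar S$, so precomposition restricts to a functor $L^{*}\colon \Fun(S^{-1}\mathcal{C},\mathcal{D})\to \Fun_{\bar S}(\mathcal{C},\mathcal{D})$. The candidate inverse is $i^{*}\colon \Fun_{\bar S}(\mathcal{C},\mathcal{D})\to \Fun(S^{-1}\mathcal{C},\mathcal{D})$. The two composites are handled by the unit and counit of the reflection. On the one hand, the counit $\epsilon\colon Li\to \id_{S^{-1}\mathcal{C}}$ is a natural equivalence by reflectivity, so for any $G\colon S^{-1}\mathcal{C}\to\mathcal{D}$ the whiskering $G\ast\epsilon$ yields $G\circ L\circ i\simeq G$ pointwise, giving $i^{*}L^{*}\simeq\id$. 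On the other hand, each component $\eta_{X}\colon X\to iLX$ of the unit lies in $\bar S$: by the triangle identity $(\epsilon L)\circ(L\eta)=\id_{L}$ together with $\epsilon$ being invertible, we have $L\eta_{X}$ an equivalence. Consequently, for any $F\in \Fun_{\bar S}(\mathcal{C},\mathcal{D})$, the whiskering $F\ast\eta\colon F\to F\circ iL$ is a pointwise equivalence, whence $L^{*}i^{*}\simeq\id$. The equivalence $S^{-1}\mathcal{C}\simeq \mathcal{C}[\bar S^{-1}]$ then follows by comparing universal properties, since both exhibit $\Fun_{\bar S}(\mathcal{C},-)$ as representable.

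The main obstacle I anticipate is the $\infty$-categorical bookkeeping needed to promote the pointwise construction just sketched to honest natural equivalences in the functor $\infty$-categories $\Fun(\mathcal{C},\mathcal{D})$ and $\Fun(S^{-1}\mathcal{C},\mathcal{D})$; the content is the classical triangle-identity argument, but producing the required $\infty$-natural transformations rigorously rests on the adjunction formalism of \cite{HTT}, Chapter~5, and on the fact that an equivalence in a functor $\infty$-category is detected pointwise.
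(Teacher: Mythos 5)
Your proof is correct and follows essentially the same route as the paper, which explicitly says it retraces Lurie's proof of \cite{HTT}, Proposition 5.5.4.20: both use the unit $\eta$ and counit $\epsilon$ of the reflection $L\dashv i$ to identify $i^{*}$ as a two-sided inverse to $L^{*}$, the key observation being that each $\eta_X$ lies in $\bar S$ (so $F\eta$ is a pointwise equivalence for any $F\in\Fun_{\bar S}(\mathcal{C},\mathcal{D})$). Your write-up is a bit more self-contained than the paper's, which delegates the details to \cite{HTT} and invokes strong saturation of $\bar S$ via \cite{HTT} 5.5.4.11, whereas you read $\eta_X\in\bar S$ directly off the triangle identity and the definition of $\bar S$.
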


\begin{proof}
Observe first that the internal localization $S^{-1} \mathcal{C}$ is equivalent to $\bar{S}^{-1} \mathcal{C}$, as it readily follows by the definitions. The proof now retraces the proof of \cite{HTT}, Proposition 5.5.4.20, except that it is slightly simplified in the end. Namely, instead of $S$ we use $\bar{S}$, which is always strongly saturated by \cite{HTT}, Proposition 5.5.4.11, so that we don't need to restrict ourselves to colimit-preserving functors, and the statement holds more in general for the $\infty$-category of all functors $\mathcal{C} \to \mathcal{D}$.
\end{proof}

\begin{rem}
Since presentability of ordinary categories and adjunctions between them are detected on their nerves, we can alternatively obtain \Cref{smallorthogonalityreflective} as a corollary of \Cref{infsmallorthogonalityreflective}. Moreover, the next additional result immediately follows as well.
\end{rem}

\begin{cor} \label{intext}
Let $\mathcal{C}$ be a presentable category and let $\bar{S}$ be a strongly saturated class of morphisms in $\mathcal{C}$ that is of small generation. Then the internal localization at $\bar{S}$ is equivalent to the corresponding external localization.
\end{cor}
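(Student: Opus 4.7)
The plan is to deduce this from the preceding proposition by passing through the nerve, as suggested in the remark just above. Since $\bar{S}$ is of small generation, choose a small set $S$ of morphisms in $\mathcal{C}$ whose strongly saturated closure is $\bar{S}$. Next, recall that the nerve $\N(\mathcal{C})$ of a presentable 1-category is itself a presentable $\infty$-category, so $S$ constitutes a small set of morphisms in the presentable $\infty$-category $\N(\mathcal{C})$ to which the preceding proposition applies.

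Applying that proposition to $\N(\mathcal{C})$ and $S$, let $L$ denote the reflection onto $S^{-1} \N(\mathcal{C})$ and $\bar{S}'$ the class of morphisms inverted by $L$; one obtains an equivalence of $\infty$-categories
\begin{equation*}
S^{-1} \N(\mathcal{C}) \;\simeq\; \N(\mathcal{C})\bigl[(\bar{S}')^{-1}\bigr].
\end{equation*}
The key point to check is that $\bar{S}'$ coincides with $\bar{S}$, viewed as morphisms in $\N(\mathcal{C})$. Since mapping spaces in $\N(\mathcal{C})$ are discrete, $\infty$-orthogonality against any object reduces to ordinary orthogonality, so $S^{-1} \N(\mathcal{C})$ is precisely the nerve of the 1-categorical internal localization $S^{-1} \mathcal{C} = \bar{S}^{-1} \mathcal{C}$ of \Cref{smallorthogonalityreflective}, and $L$ is the nerve of the 1-categorical reflection. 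Consequently $\bar{S}'$ is the strongly saturated closure of $S$, that is, exactly $\bar{S}$.

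It remains to translate the equivalence back into 1-categorical terms. The left-hand side $\bar{S}^{-1} \N(\mathcal{C})$ is, by the identification above, the nerve $\N(\bar{S}^{-1} \mathcal{C})$ of a 1-category. Hence $\N(\mathcal{C})[\bar{S}^{-1}]$ is also (equivalent to) the nerve of some 1-category $\mathcal{C}'$; restricting its $\infty$-categorical universal property to 1-categorical targets $\mathcal{D}$ (where all relevant functor $\infty$-categories are themselves nerves of 1-categories) yields precisely the 1-categorical universal property defining $\mathcal{C}[\bar{S}^{-1}]$, so $\mathcal{C}' \simeq \mathcal{C}[\bar{S}^{-1}]$. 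Combining the two identifications gives the asserted equivalence $\bar{S}^{-1} \mathcal{C} \simeq \mathcal{C}[\bar{S}^{-1}]$.

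The only non-cosmetic step is the identification of $\bar{S}'$ with $\bar{S}$ on the nerve; but this reduces to the remark that 1-categorical and $\infty$-categorical orthogonality agree for 1-categories, so the main work has already been done in the $\infty$-categorical proposition.
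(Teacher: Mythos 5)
Your proposal is correct and matches the route the paper intends: the paper gives no explicit argument, merely noting in the preceding remark that the result ``immediately follows'' by taking nerves, and your write-up supplies exactly the details of that reduction — choosing a small generating set $S$, applying the preceding $\infty$-categorical proposition to $\N(\mathcal{C})$, observing that discreteness of mapping spaces in a nerve collapses $\infty$-orthogonality to ordinary orthogonality (so $S^{-1}\N(\mathcal{C}) = \N(S^{-1}\mathcal{C})$ and $\bar S' = \bar S$), and then restricting the $\infty$-categorical universal property of $\N(\mathcal{C})[\bar S^{-1}]$ to 1-categorical targets. So this is the same approach, just spelled out.
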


The following are known results that we will try to transfer to the $\infty$-world.

\begin{prop}[\cite{LocPresAccCat}, 6.24] \label{localizationreflective}
Assuming $\vp$, let $\mathcal{C}$ be a presentable category, and let $\mathcal{A}$ be a full subcategory. Then the following statements are equivalent:

\begin{enumerate}
\item $\mathcal{A}$ is closed under small limits in $\mathcal{C}$;
\item $\mathcal{A}$ is the internal localization at a class of morphisms $S$;
\item $\mathcal{A}$ is the internal localization at a set of morphisms $S$;
\item $\mathcal{A}$ is reflective in $\mathcal{C}$.
\end{enumerate}

Moreover, each of these properties implies presentability of $\mathcal{A}$.
\end{prop}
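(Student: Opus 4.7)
My plan is to close the diamond of implications by dispatching the easy ones without $\vp$ and then proving $(1) \Rightarrow (3)$ in two $\vp$-requiring sub-steps $(1) \Rightarrow (4) \Rightarrow (3)$. The easy implications are as follows: $(3) \Rightarrow (2)$ is tautological; $(3) \Rightarrow (4)$ is exactly \Cref{smallorthogonalityreflective}; $(4) \Rightarrow (1)$ is the classical fact that a reflective full inclusion is a right adjoint, hence preserves limits; and $(2) \Rightarrow (1)$ follows from the computation
\[
\mathcal{C}(B, \lim_i C_i) \cong \lim_i \mathcal{C}(B, C_i) \xrightarrow{\sim} \lim_i \mathcal{C}(A, C_i) \cong \mathcal{C}(A, \lim_i C_i),
\]
valid for any $f: A \to B$ in $S$ and any diagram $(C_i)$ of $S$-local objects. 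The final presentability assertion for $\mathcal{A}$ will then fall out of (3) by a further appeal to \Cref{smallorthogonalityreflective}.

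For the first substantive step, $(1) \Rightarrow (4)$, I would invoke Freyd's Adjoint Functor Theorem: the inclusion $\mathcal{A} \hookrightarrow \mathcal{C}$ already preserves small limits by hypothesis, so it suffices to verify the solution set condition at every $X \in \mathcal{C}$. Were it to fail at some $X$, then by transfinite recursion I could choose an $\Ord$-indexed family of morphisms $f_\alpha: X \to A_\alpha$ into $\mathcal{A}$ no one of which factors through any predecessor. Regarding the $f_\alpha$ as objects of the presentable slice $X \downarrow \mathcal{C}$, this rigidity would yield a proper class with only identity morphisms between its members inside a presentable category, in direct contradiction with \Cref{vopenka}(1).

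For the second step, $(4) \Rightarrow (3)$, let $L \dashv \iota$ be the reflection and let $\bar{S}$ be the class of all morphisms inverted by $L$; then $\mathcal{A} = \bar{S}^{-1}\mathcal{C}$ tautologically, and my task is to extract a small $S \subseteq \bar{S}$ with $S^{-1}\mathcal{C} = \mathcal{A}$. If no such $S$ existed, then for each set $S_0 \subseteq \bar{S}$ there would be a witness $C_{S_0} \in S_0^{-1}\mathcal{C} \setminus \mathcal{A}$, and indexing such witnesses along an increasing $\Ord$-chain of such sets and using the universal property of the reflection unit to preclude morphisms between these witnesses would again produce a rigid proper class in a presentable category, contradicting \Cref{vopenka}(1) in its reformulation \Cref{altvopenka}. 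The principal obstacle is making these two rigidity arguments truly work: in each case the inductive step must genuinely rule out the existence of morphisms from predecessors to the newly chosen object, and this rigidity must be extracted from the hypothesized failure of the solution set or small-generation condition rather than being built in by hand. Once that point is secured, the $\vp$ contradictions are immediate.
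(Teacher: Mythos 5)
The paper does not prove this proposition; it cites \cite{LocPresAccCat}, Theorem 6.24, and only sorts out in the following remark which implications are set-theory-free ($(3) \Rightarrow (2) \Rightarrow (1)$ trivially, $(3) \Rightarrow (4)$ by \Cref{smallorthogonalityreflective}, $(4) \Rightarrow (2)$ via the class of all $L$-inverted morphisms) and which are not ($(1) \Rightarrow (4)$, in fact only a $\wvp$ matter, and $(4) \Rightarrow (3)$, the route taken in \cite{LocPresAccCat}). Your decomposition agrees with this, and your handling of the easy implications is fine. In $(1) \Rightarrow (4)$, however, the class you build is not rigid: the recursion only ensures that $f_\alpha$ does not factor through $f_\beta$ when $\beta < \alpha$, i.e.\ that $(X \downarrow \mathcal{C})(f_\beta, f_\alpha)$ is empty for $\beta < \alpha$; nothing rules out morphisms $f_\alpha \to f_\beta$ with $\alpha > \beta$ or non-identity endomorphisms. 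What you produce therefore contradicts \Cref{vopenka}(3), not \Cref{vopenka}(1) as you claim. That fixes the step with a single changed reference, and it remains a full-$\vp$ argument; to obtain the sharper $\wvp$-only statement the paper alludes to, one must also arrange that morphisms $f_\alpha \to f_\beta$ exist for $\alpha \geq \beta$, which is precisely what passing to product-closures accomplishes in \Cref{prodweaklyreflective}.

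The real gap is $(4) \Rightarrow (3)$, and you flag it yourself. Given a witness $C_\alpha \in S_\alpha^{-1}\mathcal{C} \setminus \mathcal{A}$ at each stage of an increasing $\Ord$-chain $S_\alpha \subseteq \bar{S}$, nothing in ``the universal property of the reflection unit'' forecloses morphisms between distinct witnesses: the unit $\eta_{C_\alpha}$ is a non-invertible element of $\bar{S}$, but that places no constraint on the hom-sets $\mathcal{C}(C_\alpha, C_\beta)$, and there is no apparent way to choose the witnesses so as to make the resulting class match any of the forbidden configurations in \Cref{vopenka} or \Cref{altvopenka}. This is not a deferred detail; it is the heart of the theorem. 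The argument in \cite{LocPresAccCat} has to control the size of the data encoding orthogonality, using a full embedding of $\mathcal{K}$ into a combinatorial ambient category such as $\catname{Gra}$ (cf.\ \Cref{2.65} and the discussion under the paper's open question (1)$'$), and no analogue of that control appears in your sketch. As written, the proposed proof of $(4) \Rightarrow (3)$ does not go through.
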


\begin{rem}
In the above proposition, the implications $(3) \Rightarrow (2) \Rightarrow (1)$ are trivial and always true regardless of any set-theoretical assumptions. The implication $(3) \Rightarrow (4)$ is taken care of by \Cref{smallorthogonalityreflective}, and the implication $(4) \Rightarrow (2)$ is also verified by taking the class $S$ of all reflections. The results really depending on set-theory are then $(1) \Rightarrow (4)$, in fact only depending on $\wvp$, and $(2) \Rightarrow (3)$ (or, alternatively, $(4) \Rightarrow (3)$, which is the strategy adopted in \cite{LocPresAccCat}).
\end{rem}

Our next goal will be to generalize \Cref{localizationreflective} to $\infty$-categories. Observe that the implications $(3) \Rightarrow (2) \Rightarrow (1)$ are likewise trivial, and $(3) \Rightarrow (4)$ is precisely \Cref{infsmallorthogonalityreflective}. Therefore, it only remains to show $(1) \Rightarrow (4)$ and $(2) \Rightarrow (3)$. 

\begin{lem} \label{smallundercategory}
Let $\mathcal{C}$ be a locally small $\infty$-category, and $\mathcal{A} \subseteq \mathcal{C}$ an essentially small full subcategory. Then for each object $C \in \mathcal{C}$ the undercategory $\mathcal{A}_{C/}$ is essentially small.
\end{lem}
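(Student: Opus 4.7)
The plan is to verify the two characterizing conditions of essential smallness for $\mathcal{A}_{C/}$, namely that it has a small set of equivalence classes of objects and that all of its mapping spaces are essentially small.

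First I would replace $\mathcal{A}$, up to equivalence, by an $\infty$-category whose set of objects is literally small (this is possible by hypothesis on $\mathcal{A}$ and passes to the equivalent undercategory). The objects of $\mathcal{A}_{C/}$ are then pairs $(A,f)$ with $A \in \mathcal{A}$ and $f$ a vertex of $\Map_{\mathcal{C}}(C,A)$, and the set of equivalence classes is bounded by the union, over the small set of objects $A$ of $\mathcal{A}$, of $\pi_0 \Map_{\mathcal{C}}(C,A)$. Since $\mathcal{C}$ is locally small each $\Map_{\mathcal{C}}(C,A)$ is essentially small, so each $\pi_0$ is a small set, and a small union of small sets is small.

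Second, I would use the standard identification of mapping spaces in an undercategory (as in \cite{HTT} 1.2.9 and the pullback description following it), which exhibits $\Map_{\mathcal{A}_{C/}}((A,f),(A',f'))$ as the homotopy fiber of the precomposition map
\begin{center}
$f^{\ast}: \Map_{\mathcal{A}}(A,A') \to \Map_{\mathcal{C}}(C,A')$
\end{center}
over the point $f'$. Since $\mathcal{A}$ is a full subcategory of the locally small $\mathcal{C}$, both the source and target of this map are essentially small spaces, and hence any homotopy fiber is also essentially small.

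The main obstacle, such as it is, is just to invoke the correct model for mapping spaces in the undercategory; once the pullback/fiber description is in hand, the verification is purely a matter of bookkeeping with small collections.
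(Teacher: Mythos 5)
Your argument is correct, but it takes a different and more hands-on route than the paper's. You verify essential smallness directly, by bounding the set of equivalence classes of objects and then identifying each mapping space of $\mathcal{A}_{C/}$ as a homotopy fiber of a precomposition map between essentially small spaces (this is the characterization of essential smallness from \cite{HTT}, Proposition 5.4.1.2, combined with the standard fiber description of mapping spaces in an undercategory). The paper instead works one level up, in $\biginfcat$: it exhibits $\mathcal{A}_{C/}$ as the pullback $\mathcal{A} \times_{\mathcal{S}} \mathcal{S}_{\ast/}$ along the classifying functor $D \mapsto \mathcal{C}(C,D)$ of the left fibration $\mathcal{C}_{C/} \to \mathcal{C}$, observes that local smallness of $\mathcal{C}$ and essential smallness of $\mathcal{A}$ let this functor factor through the essentially small $\mathcal{S}_\kappa$, and concludes by the pasting law and closure of essentially small $\infty$-categories under pullback. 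Your version is more elementary and avoids the fibration formalism; the paper's version is slicker and shorter once one is comfortable manipulating pullback squares of $\infty$-categories. The one piece of bookkeeping your writeup should make explicit is that fullness of $\mathcal{A}$ in $\mathcal{C}$ is what identifies $\Map_{\mathcal{A}}(A,A')$ with $\Map_{\mathcal{C}}(A,A')$, so that local smallness of $\mathcal{C}$ applies to the source of $f^{\ast}$.
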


\begin{proof}
Observe that there is a coherently commutative diagram

\begin{center}
\begin{tikzcd}
\mathcal{A}_{C/} \ar[d] \ar[r] & \mathcal{C}_{C/} \ar[d] \ar[r] & \mathcal{S}_{\ast /} \ar[d]\\
\mathcal{A} \ar[r] & \mathcal{C} \ar[r] & \mathcal{S}
\end{tikzcd}
\end{center}

in $\biginfcat$, where the left square is a pullback by definition, and the right square is so by the structure theory of fibrations (for a treatment thereof, see for example \cite{HTT}, Section 3.3, and in particular Proposition 3.3.2.5). Moreover, the classifying functor $\mathcal{C} \to \mathcal{S}$ sends each object $D$ to the mapping space $\mathcal{C}(C,D)$. By local smallness of $\mathcal{C}$ and essential smallness of $\mathcal{A}$, we may choose a cardinal $\kappa$ such that $\mathcal{C}(C,D)$ is essentially $\kappa$-small whenever $D \in \mathcal{A}$. It follows that the lower composition in the diagram factors through $\mathcal{S}_{\kappa}$, and therefore there is another diagram

\begin{center}
\begin{tikzcd}
\mathcal{A}_{C/} \ar[d] \ar[r] & (\mathcal{S}_{\kappa})_{\ast /} \ar[d] \ar[r] & \mathcal{S}_{\ast /} \ar[d]\\
\mathcal{A} \ar[r] & \mathcal{S}_{\kappa} \ar[r] & \mathcal{S}.
\end{tikzcd}
\end{center}

Here the right square is clearly a pullback, and so is the outer square by the diagram above. By the pasting law, it follows that the left square is a pullback. Since $\mathcal{A}_{C/}$ is now seen to be a pullback of essentially small $\infty$-categories, it is itself essentially small.
\end{proof}

For the next lemma, recall that a subcategory $\mathcal{A} \subseteq \mathcal{C}$ is said to be weakly reflective if for every $C \in \mathcal{C}$ the undercategory $\mathcal{A}_{C/}$ has a weakly initial set.

\begin{lem} \label{prodweaklyreflective}
Assuming $\wvp$, then a full subcategory of a presentable $\infty$-category which is closed under products is weakly reflective.
\end{lem}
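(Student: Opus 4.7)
The plan is to argue by contradiction, exploiting the equivalence chain $\wvp \Leftrightarrow \iswvp$ established in \Cref{iwvpiswvp} and \Cref{wvpiwvp}. Suppose that for some $C \in \mathcal{C}$ the undercategory $\mathcal{A}_{C/}$ admits no weakly initial set. I will construct by transfinite induction on $\alpha \in \Ord$ a family of objects $g_\alpha: C \to A_\alpha$ in $\mathcal{A}_{C/}$, together with a coherent tower of projections $A_\alpha \to A_\beta$ under $C$ for $\alpha \geq \beta$, and such that no morphism from $g_\beta$ to $g_\alpha$ exists in $\mathcal{A}_{C/}$ whenever $\beta < \alpha$. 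Since $\mathcal{A}_{C/} \hookrightarrow \mathcal{C}_{C/}$ is fully faithful and $\mathcal{C}_{C/}$ is presentable (see for instance \cite{HTT}, 5.5.3.11), this will produce a counterexample to $\iswvp$ in $\mathcal{C}_{C/}$.

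At a successor stage $\alpha = \gamma + 1$: the small set $\{g_\beta\}_{\beta \leq \gamma}$ of objects of $\mathcal{A}_{C/}$ is not weakly initial by hypothesis, so there exists some $g: C \to A$ in $\mathcal{A}_{C/}$ receiving no morphism from any $g_\beta$ with $\beta \leq \gamma$. I take $A_{\gamma+1} := A_\gamma \times A$, a binary product formed in $\mathcal{C}$ and landing in $\mathcal{A}$ by closure, and let $g_{\gamma+1}$ be the pairing of $g_\gamma$ with $g$; the projection $A_{\gamma+1} \to A_\gamma$, composed with the tower already constructed, extends the projection system, while projecting to the $A$-factor shows that any morphism $g_\beta \to g_{\gamma+1}$ would yield a morphism $g_\beta \to g$, contradicting the choice of $g$.

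At a limit stage $\lambda$: I take $A_\lambda := \prod_{\beta < \lambda} A_\beta$, a $\lambda$-indexed (hence small) product formed in $\mathcal{C}$ and landing in $\mathcal{A}$ by closure, and let $g_\lambda$ be the essentially unique pairing of the $g_\beta$'s. A hypothetical morphism $g_\beta \to g_\lambda$ in $\mathcal{A}_{C/}$ would, upon projection to $A_\delta$ for any $\beta < \delta < \lambda$, produce a morphism $g_\beta \to g_\delta$, contradicting the inductive hypothesis. The resulting family $(g_\alpha)_{\alpha \in \Ord}$ then satisfies $\mathcal{C}_{C/}(g_\alpha, g_\beta) = \emptyset$ whenever $\alpha < \beta$ (by fullness of $\mathcal{A}_{C/} \hookrightarrow \mathcal{C}_{C/}$) and non-empty whenever $\alpha \geq \beta$ (witnessed by the projection $A_\alpha \to A_\beta$ under $C$), which contradicts $\iswvp$ and hence $\wvp$.

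The main delicacy is that in the $\infty$-categorical setting products and pairings are characterized only up to contractible choice, so the tower of projections and the pairing maps $g_\lambda = (g_\beta)_{\beta < \lambda}$ must be chosen coherently stage by stage. However, the entire argument is driven purely by the empty/non-empty dichotomy for mapping spaces, so coherence issues reduce to choosing, at each stage, a single representative in the nonempty space of pairings; no global coherence data is needed. The combinatorial heart of the proof is formally the same as in the 1-categorical case under $\swvp$, with the $\infty$-categorical generality absorbed entirely by the equivalence $\swvp \Leftrightarrow \iswvp$.
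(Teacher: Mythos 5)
Your proof is correct, but it takes a genuinely different route from the paper's. The paper first writes $\mathcal{A}$ as a $\Ord$-indexed union of small subcategories $\mathcal{A}_\alpha$, invokes \Cref{smallundercategory} to see that each $(\mathcal{A}_\alpha)_{C/}$ is essentially small, and forms explicit candidate weak reflections $C_\alpha = \prod_{A \in ((\mathcal{A}_\alpha)_{C/})_0} A$ lying in the product closure $\overline{\mathcal{A}_\alpha}$. It then proves each $C \to C_\alpha$ is weakly initial in $(\overline{\mathcal{A}_\alpha})_{C/}$, and argues by contradiction that some $C_\alpha$ must already be weakly initial in $\mathcal{A}_{C/}$: otherwise one extracts a proper class $\mathbf{B}$ of ordinals giving a rigid chain in $\mathcal{C}_{C/}$ contradicting $\iswvp$. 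Your argument skips the filtration and the intermediate weak reflectivity entirely: assuming for contradiction that no weakly initial set exists, you use that assumption directly at each successor stage to obtain a witness $g$ not under any previously constructed object, fold it into the chain by a binary product, and aggregate at limit stages by a small product; the closure of $\mathcal{A}$ under (small) products is exactly what keeps the chain inside $\mathcal{A}$, and the projections supply the backward morphisms needed for the $\iswvp$ contradiction in $\mathcal{C}_{C/}$. Both routes rest on $\wvp \Leftrightarrow \iswvp$, closure under products, and presentability of $\mathcal{C}_{C/}$ (HTT 5.5.3.11), but your transfinite recursion is more economical: it avoids \Cref{smallundercategory} and the detour through product closures of an exhaustion, at the cost of being slightly less constructive (the paper's proof identifies the weak reflection as one of the explicit $C_\alpha$'s, while yours is a pure existence-by-contradiction argument). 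Your remark about coherence is also apt — only the inhabitedness of mapping spaces matters, so no higher coherence data needs to be tracked.
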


\begin{proof}
By \Cref{swvpiswvp} and \Cref{wvpswvp}, $\wvp$ is equivalent to $\iswvp$. Let $\mathcal{C}$ be a presentable $\infty$-category and let $\mathcal{A} \subseteq \mathcal{C}$ be a full subcategory that is closed under products. Express $\mathcal{A}$ as a union of a chain of small subcategories

\begin{center}
$\mathcal{A} = \bigcup_{\alpha \in \Ord} \mathcal{A}_{\alpha}$.
\end{center}

For each $\alpha$, denote with $\overline{\mathcal{A}_{\alpha}}$ the closure of $\mathcal{A}_{\alpha}$ under products. Note that $\mathcal{A} = \bigcup_{\alpha \in \Ord} \overline{\mathcal{A}_{\alpha}}$. Let us first construct weak reflections of an object $C \in \mathcal{C}$ in $\overline{\mathcal{A}_{\alpha}}$. In other words, we need a weakly initial object in $(\overline{\mathcal{A}_{\alpha}})_{C/}$. Observe that by \Cref{smallundercategory} the $\infty$-category $(\mathcal{A}_{\alpha})_{C/}$ is essentially small. Without loss of generality, let us assume that it is strictly small. In particular, its set of objects will be small. Now we compute the product

\begin{center}
$C_{\alpha} = \prod_{A \in ((\mathcal{A}_{\alpha})_{C/})_0} A$.
\end{center}

There is a natural map $C \to C_{\alpha}$ which is induced by the maps $C \to A$ constituting the product itself. Moreover, $C_{\alpha} \in \overline{\mathcal{A}_{\alpha}}$ by construction, therefore the map $C \to C_{\alpha}$ may be regarded as an object of $(\overline{\mathcal{A}_{\alpha}})_{C/}$. We will show that it is weakly initial in there.\\
First, for $A \in (\mathcal{A}_{\alpha})_{C/}$, the projection from the product $C_{\alpha} \to A$ is compatible with $C \to C_{\alpha}$. It remains to show that the class of objects $C \to B$ such that there exists a factorization $C_{\alpha} \to B$ is closed under products.\\
If $B = \prod_{i \in I} B_i$ is a product of objects living under $C$, and each $C \to B_i$ factors through $C \to C_{\alpha}$, then the universal property of products produces a map $C_{\alpha} \to B$ and a 2-cell

\begin{center}
\begin{tikzcd}
& C \ar[dr] \ar[dl]\\
C_{\alpha} \ar[rr] && B.
\end{tikzcd}
\end{center}

So far, we have proven that for every ordinal $\alpha$ the subcategory $\overline{\mathcal{A}_{\alpha}}$ is weakly reflective. Now, if we can find an ordinal $\alpha$ such that for every $\beta \geq \alpha$ the map $C \to C_{\beta}$ factors through $C \to C_{\alpha}$, we will have obtained a weak reflection in all $\mathcal{A}$. Suppose by contradiction that this is not true. In that case, for every $\alpha$ we can find a $\beta > \alpha$ such that $C \to C_{\beta}$ does not factor through $C \to C_{\alpha}$. Now, for $\gamma \leq \alpha$ we have a factorization $C_{\alpha} \to C_{\gamma}$ by weak reflectivity, so we also have that $C \to C_{\beta}$ does not factor through $C \to C_{\gamma}$, because otherwise a composite $C_{\alpha} \to C_{\gamma} \to C_{\beta}$ would produce a factorization of $C \to C_{\beta}$ through $C \to C_{\alpha}$. Likewise, if $\alpha < \gamma < \beta$, the map $C \to C_{\beta}$ doesn't factor through $C \to C_{\gamma}$. To see this, observe that by assumption there is a $\beta' > \beta$ with the same property that $\beta$ has with respect to $\alpha$, so we may just repeat the argument above after replacing $\alpha$ by $\beta$ and $\beta$ by $\beta'$. To summarize, $C \to C_{\beta}$ does not factor through $C \to C_{\gamma}$ for any $\gamma < \beta$. Since we can choose such a $\beta$ for any $\alpha$, there is a proper class of ordinals $\mathbf{B}$ such that none of the arrows $C \to C_{\beta}$ factors through any of the $C \to C_{\alpha}$ for a lower $\alpha$. In particular, we have produced a class of objects $(C_{\alpha})_{\alpha \in \mathbf{B}}$ in $\mathcal{C}_{C/}$ (which is presentable by \cite{HTT}, Proposition 5.5.3.11) such that $\mathcal{C}_{C/}(C_{\alpha}, C_{\beta})$ is non-empty for $\alpha \geq \beta$ by weak reflectivity, and empty for $\alpha < \beta$ by construction. Since $\mathbf{B} \cong \Ord$, this contradicts $\iswvp$.
\end{proof}

\begin{prop} \label{limclosurereflective}
Assuming $\wvp$, given a presentable $\infty$-category $\mathcal{C}$ and full subcategory $\mathcal{D} \subseteq \mathcal{C}$, if $\mathcal{D}$ is closed under limits then it is reflective.
\end{prop}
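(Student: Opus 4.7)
The plan is to combine \Cref{prodweaklyreflective} with the stronger hypothesis (closure under all small limits, not just products) in order to upgrade weak reflections to genuine ones, following the classical Freyd-style strategy used for Theorem 4.5 in \cite{LocPresAccCat}. Since closure under arbitrary small limits entails closure under products, \Cref{prodweaklyreflective} immediately furnishes, for each $C \in \mathcal{C}$, a weakly initial object $\eta_C : C \to R_C$ of the undercategory $\mathcal{D}_{C/}$; moreover $\mathcal{D}_{C/}$ inherits all small limits from $\mathcal{D}$, because the forgetful functor $\mathcal{D}_{C/} \to \mathcal{D}$ creates them (cf.\ \cite{HTT}, Proposition 1.2.13.8). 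The whole task thus reduces to promoting each weakly initial $\eta_C$ to a genuinely initial object of $\mathcal{D}_{C/}$.

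The 1-categorical recipe is to replace $\eta_C$ by the joint equalizer of all its self-maps. In the $\infty$-setting I would instead form a single limit encompassing the whole mapping space of self-maps at once: let $X = \Map_{\mathcal{D}_{C/}}(\eta_C, \eta_C)$, which is essentially small by local smallness of $\mathcal{C}$, and consider the evident diagram $X \to \mathcal{D}_{C/}$ constant on $\eta_C$ whose $1$-simplices act by the tautological self-maps they classify. Its limit $\eta_C' \in \mathcal{D}_{C/}$ exists by limit-closure of $\mathcal{D}$ and comes equipped with a canonical comparison map $\eta_C' \to \eta_C$, postcomposition with which forces all self-endomorphisms of $\eta_C$ to coincide up to coherent homotopy. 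Weak initiality of $\eta_C'$ is then inherited from that of $\eta_C$ by postcomposing with $\eta_C' \to \eta_C$; contractibility of each mapping space $\Map_{\mathcal{D}_{C/}}(\eta_C', Y)$ is obtained by feeding pairs of parallel maps $\eta_C' \rightrightarrows Y$ back into the equifier property of $\eta_C'$, combined once more with weak initiality of $\eta_C$. The assignment $C \mapsto \eta_C'$ then defines a reflection of $\mathcal{C}$ onto $\mathcal{D}$.

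The main obstacle I foresee is the one common to adaptations of Freyd-style arguments to the $\infty$-world: ensuring that not merely parallel $1$-morphisms, but also parallel $2$-cells, $3$-cells, and so on are coherently equified. Indexing the diagram by the entire mapping space $X$, rather than by its set of connected components only, is precisely the device designed to absorb all these higher coherences in one stroke; verifying that this single $\infty$-categorical limit genuinely replaces the possibly transfinite iteration of equalizers employed classically is where the technical heart of the proof lies. It may be cleanest to isolate this step as a standalone auxiliary lemma asserting that any small-complete $\infty$-category admitting a weakly initial object also admits an initial one, and then invoke it inside $\mathcal{D}_{C/}$ for each fixed $C$.
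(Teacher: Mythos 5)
Your first step coincides exactly with the paper's: closure under limits implies closure under products, so \Cref{prodweaklyreflective} gives a weakly initial object in each undercategory $\mathcal{D}_{C/}$. The divergence is in the second step. The paper does \emph{not} attempt to upgrade a weak reflection to a genuine one by hand; it simply observes that $\mathcal{D} \hookrightarrow \mathcal{C}$ preserves limits and then invokes the $\infty$-categorical General Adjoint Functor Theorem of Nguyen--Raptis--Schrade (\cite{AFT}, Theorem 3.2.5), for which the solution-set condition is precisely the existence of weakly initial sets. Everything you are trying to prove inline is already packaged there.

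As written, your inline argument has a genuine gap. The ``evident diagram $X \to \mathcal{D}_{C/}$ constant on $\eta_C$ whose $1$-simplices act by the tautological self-maps'' is not a well-defined functor: the mapping space $X = \Map_{\mathcal{D}_{C/}}(\eta_C,\eta_C)$ is a Kan complex, so any functor out of $X$ sends every edge to an equivalence of $\mathcal{D}_{C/}$, whereas the self-maps of $\eta_C$ need not be invertible. Thus the only constant-on-$\eta_C$ functor $X \to \mathcal{D}_{C/}$ is the genuinely constant one, whose limit is the cotensor $\eta_C^X$ and does not equify anything. The correct $\infty$-categorical replacement for the classical ``joint equalizer of all endomorphisms'' construction is exactly what makes the proof of the AFT nontrivial; \cite{AFT} handles it with a careful treatment of higher coherences and a suitable local-smallness hypothesis. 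Your closing suggestion to isolate ``any small-complete, locally small $\infty$-category with a weakly initial object has an initial one'' as a standalone lemma is sound strategy, but proving that lemma is the hard part, and it is essentially (a piece of) \cite{AFT}, Theorem 3.2.5 rather than something that follows from a single limit indexed by a mapping space. Replacing your ad hoc equifier with a citation to that theorem repairs the argument and recovers the paper's proof.
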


This statement is precisely the transposition of $(1) \Rightarrow (4)$ of \Cref{localizationreflective} to $\infty$-categories.

\begin{proof}
Since the inclusion functor $\mathcal{D} \hookrightarrow \mathcal{C}$ tautologically preserves small limits, by \cite{AFT}, Theorem 3.2.5 we are reduced to proving that for every object $C \in \mathcal{C}$ the undercategory $\mathcal{D}_{C/}$ has a weakly initial set. Now $\mathcal{D}$ is obviously closed under products, therefore the desired conclusion follows from \Cref{prodweaklyreflective}.
\end{proof}

\begin{cor} \label{smalllimclosurereflective}
Assuming $\wvp$, given a presentable $\infty$-category $\mathcal{C}$ and a full subcategory $\mathcal{D} \subseteq \mathcal{C}$, then the limit closure of $\mathcal{D}$ in $\mathcal{C}$ is reflective.
\end{cor}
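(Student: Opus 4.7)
The strategy is to apply \Cref{limclosurereflective} directly to the limit closure of $\mathcal{D}$ in $\mathcal{C}$. Concretely, I would first define $\bar{\mathcal{D}} \subseteq \mathcal{C}$ as the intersection of all full subcategories of $\mathcal{C}$ that contain $\mathcal{D}$ and are closed under small limits. This collection is non-empty because presentability of $\mathcal{C}$ guarantees that $\mathcal{C}$ itself has all small limits and thus qualifies as one such subcategory; and an intersection of full subcategories of $\mathcal{C}$ is again a full subcategory.

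Next, I would verify that $\bar{\mathcal{D}}$ is itself closed under small limits in $\mathcal{C}$. Given any small diagram $F \colon K \to \bar{\mathcal{D}}$, let $\lim F$ be its limit computed in $\mathcal{C}$. For every full subcategory $\mathcal{E} \subseteq \mathcal{C}$ participating in the intersection defining $\bar{\mathcal{D}}$, the diagram $F$ factors through $\mathcal{E}$, and by closure of $\mathcal{E}$ under small limits we have $\lim F \in \mathcal{E}$. Since this holds for every such $\mathcal{E}$, we conclude $\lim F \in \bar{\mathcal{D}}$, so $\bar{\mathcal{D}}$ is closed under small limits.

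Finally, assuming $\wvp$, I invoke \Cref{limclosurereflective} applied to the full subcategory $\bar{\mathcal{D}} \subseteq \mathcal{C}$: as a full subcategory of a presentable $\infty$-category closed under small limits, it is reflective, which is exactly the conclusion we want. There is no substantial obstacle here: the statement is essentially a formal consequence of the preceding proposition, the only point requiring verification being the routine observation that the intersection-style definition of the limit closure actually produces a subcategory stable under small limits. The more interesting content has already been done in \Cref{prodweaklyreflective} and \Cref{limclosurereflective}.
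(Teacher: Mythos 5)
Your proof is correct and is precisely the argument the paper leaves implicit in labeling the statement a corollary: the limit closure, being by construction closed under small limits, satisfies the hypothesis of \Cref{limclosurereflective}. The verification that the intersection-style definition yields a subcategory closed under small limits is routine and you carry it out correctly.
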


\begin{rem} \label{smalllimclosureremark}
Note that in the first part of the proof of \cite{LocPresAccCat}, Theorem 6.22 it is proven that the limit closure of a \emph{small} subcategory is reflective, without assuming $\wvp$. However, I didn't find a way to adapt that part of the proof to the $\infty$-world, so the proof of my \Cref{limclosurereflective} rather relies on \Cref{prodweaklyreflective}, which needs $\wvp$ anyway. Therefore, it remains unknown if it is possible to drop $\wvp$ from the statement of \Cref{smalllimclosurereflective} in the case where $\mathcal{D}$ is small.
\end{rem}

We now complete our first task by proving that the implication $(2) \Rightarrow (3)$ also holds true in $\infty$-categories. First, we want to remind a well-known result about the existence of left Bousfield localizations of combinatorial model categories, and an expansion thereof under the assumption that $\vp$ be true.

\begin{thm}[\cite{HirschhornModelCategories}, 4.1.1] \label{hirschhornleft}
Let $\mathbf{M}$ be a left proper combinatorial model category, and let $\mathbb{S}$ be a set of morphisms in $\mathbf{M}$. Then the left Bousfield localization of $\mathbf{M}$ at $\mathbb{S}$ exists and it is combinatorial and left proper. Moreover, if $\mathbf{M}$ is simplicial, then so is such localization.
\end{thm}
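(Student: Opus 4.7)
The plan is to apply Jeff Smith's recognition theorem for combinatorial model structures on a locally presentable category. The underlying category of $L_{\mathbb{S}}\mathbf{M}$ is the same as $\mathbf{M}$, hence locally presentable, and the generating cofibrations are forced to be the same set $I$ as for $\mathbf{M}$, so that the cofibrations of $L_{\mathbb{S}}\mathbf{M}$ automatically coincide with those of $\mathbf{M}$. The weak equivalences are prescribed to be the class $\mathcal{W}_{\mathbb{S}}$ of $\mathbb{S}$-local equivalences. The real work is to exhibit a \emph{set} $J_{\mathbb{S}}$ of generating trivial cofibrations and verify the remaining hypotheses of Smith's theorem.

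To construct $J_{\mathbb{S}}$, for each $s \colon A \to B$ in $\mathbb{S}$ I would first replace $s$ by a cofibration $A \hookrightarrow \tilde{B}$ followed by a trivial fibration $\tilde{B} \to B$, using the small object argument on $I$; detecting $\mathbb{S}$-local objects is then equivalent to the analogous problem for the resulting cofibrations $\{A \hookrightarrow \tilde{B}\}_{s \in \mathbb{S}}$. Let $\Lambda(\mathbb{S})$ be the set of pushout-products of these cofibrations with the boundary inclusions $\partial \Delta^n \hookrightarrow \Delta^n$ in the simplicial case, or, in the general case, with one end of a chosen cylinder object; put $J_{\mathbb{S}} := J \cup \Lambda(\mathbb{S})$. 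Every element of $J_{\mathbb{S}}$ is a cofibration, and every element of $\Lambda(\mathbb{S})$ should be an $\mathbb{S}$-local equivalence, and it is here that \emph{left properness} first enters, since the analysis of local equivalences at the level of mapping spaces requires that pushouts along cofibrations preserve local equivalences.

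I would then verify the remaining Smith criteria in turn. The two-out-of-three property for $\mathcal{W}_{\mathbb{S}}$ is immediate from the definition; accessibility of $\mathcal{W}_{\mathbb{S}}$ as a subcategory of $\Fun([1], \mathbf{M})$ follows because being $\mathbb{S}$-local is an accessibility-preserving orthogonality condition, and being an $\mathbb{S}$-local equivalence is in turn an orthogonality condition against the accessible class of $\mathbb{S}$-local objects. The decisive step, and the principal technical obstacle, is showing that any morphism with the right lifting property against the entirety of $J_{\mathbb{S}}$ lies in $\mathcal{W}_{\mathbb{S}}$: lifting against $J$ already gives that such a map is a trivial fibration in $\mathbf{M}$, while lifting against $\Lambda(\mathbb{S})$ combined with left properness is what forces its homotopy fibers to be $\mathbb{S}$-local, promoting the map to a genuine $\mathbb{S}$-local equivalence. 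Once Smith's theorem delivers the model structure, combinatoriality is automatic; left properness of $L_{\mathbb{S}}\mathbf{M}$ follows from the unchanged cofibrations together with the preservation of $\mathbb{S}$-local equivalences under pushout along cofibrations in $\mathbf{M}$; and in the simplicial case the pushout-product axiom can be checked on the generators $I$ and $J_{\mathbb{S}}$, reducing to the known axiom for $\mathbf{M}$ plus a direct verification on $\Lambda(\mathbb{S})$, which holds by construction.
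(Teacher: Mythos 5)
The paper does not prove this theorem; it is cited directly from Hirschhorn (and in fact Hirschhorn's 4.1.1 is stated for \emph{cellular} model categories, while the combinatorial case is customarily attributed to Jeff Smith, with written accounts in Barwick's paper or Lurie's HTT A.3.7.3). So there is no in-paper proof to compare against, and the question is whether your sketch stands on its own.

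There is an internal tension in your plan. You announce that you will apply Smith's recognition theorem, but Smith's theorem takes as input only the locally presentable underlying category, a generating set $I$ of cofibrations, and an accessible class $\mathcal{W}$ of weak equivalences satisfying 2-out-of-3, $\mathrm{inj}(I)\subseteq\mathcal{W}$, and closure of $\mathrm{cof}(I)\cap\mathcal{W}$ under pushout and transfinite composition; the set $J$ is \emph{produced} by the theorem, not supplied to it. Explicitly constructing $J_{\mathbb{S}} = J\cup\Lambda(\mathbb{S})$ out of horns and then checking the cofibrantly-generated axioms directly is the Bousfield--Smith cardinality strategy (Hirschhorn's route); it does not use Smith's recognition theorem, and the hard step in that route is the cardinality bound showing $\Lambda(\mathbb{S})$-cell complexes suffice, which your sketch does not confront. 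You should choose one route and follow it.

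There is also a concrete error in the "decisive step." You assert that any morphism with RLP against all of $J_{\mathbb{S}}$ lies in $\mathcal{W}_{\mathbb{S}}$, and in support of this you say that lifting against $J$ already makes such a map a trivial fibration in $\mathbf{M}$. Neither claim is right: RLP against $J$ gives a \emph{fibration} in $\mathbf{M}$, not a trivial fibration (trivial fibrations lift against $I$), and the class $\mathrm{inj}(J_{\mathbb{S}})$ is the class of \emph{fibrations} of the localized structure, which of course contains many non-equivalences. What one actually needs to show in the direct approach is, roughly, that $\mathrm{cof}(I)\cap\mathcal{W}_{\mathbb{S}}\subseteq\mathrm{cof}(J_{\mathbb{S}})$, or in Smith-theorem language that $\mathrm{cof}(I)\cap\mathcal{W}_{\mathbb{S}}$ is closed under pushouts and transfinite composition (this is where left properness genuinely enters) and that $\mathcal{W}_{\mathbb{S}}$ is accessible. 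As written, the sketch misstates the criterion it is trying to verify, so it has a genuine gap at its core.
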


\begin{lem}[\cite{homotopyorthogonalsubcategory}, 1.3] \label{orthogonal1.3}
Assuming $\vp$, let $\mathcal{D}$ be any class of objects in a combinatorial simplicial model category $\mathbf{M}$, and let $\mathcal{S} = \mathcal{D}^{\bot}$ be the class of morphisms that are homotopy orthogonal to $\mathcal{D}$. Then there exists a set of maps $\mathcal{X}$ such that $\mathcal{X}^{\bot} = \mathcal{S}^{\bot}$.
\end{lem}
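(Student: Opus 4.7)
The plan is to argue by contradiction, via a transfinite construction in the spirit of the proof of \Cref{prodweaklyreflective}.

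Assume there is no set $\mathcal{X}$ satisfying $\mathcal{X}^{\bot} = \mathcal{S}^{\bot}$. I would build transfinitely a sequence of morphisms $(f_{\alpha})_{\alpha \in \Ord} \subseteq \mathcal{S}$ and a sequence of objects $(Z_{\alpha})_{\alpha \in \Ord}$ of $\mathbf{M}^{\circ}$ as follows. At stage $\alpha$, set $\mathcal{X}_{\alpha} := \{f_{\beta} : \beta < \alpha\}$; since $\mathcal{X}_{\alpha}$ is a set, the failure assumption forces $\mathcal{X}_{\alpha}^{\bot} \supsetneq \mathcal{S}^{\bot}$, so I pick $Z_{\alpha} \in \mathcal{X}_{\alpha}^{\bot} \setminus \mathcal{S}^{\bot}$ together with some $f_{\alpha} \in \mathcal{S}$ to which $Z_{\alpha}$ fails to be homotopy orthogonal. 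By construction each $Z_{\alpha}$ is homotopy orthogonal to $f_{\beta}$ for every $\beta < \alpha$ but not to $f_{\alpha}$.

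The key is then to convert this asymmetric orthogonality pattern into a configuration contradicting $\vp$, most naturally via its $\infty$-categorical reformulation from \Cref{altvopenkaivopenka}. The path I would follow is to replace $\mathbf{M}$ by its pointed variant $\mathbf{M}_{\ast}$ (still combinatorial and simplicial, with presentable underlying $\infty$-category by \cite{HTT}, Proposition A.3.7.6) so that homotopy cofibers $C_{\alpha} := \textup{hocofib}(f_{\alpha})$ are available and the condition ``$Z$ is $f_{\alpha}$-local'' becomes contractibility of $\Map(C_{\alpha}, Z)$. One then combines the $Z_{\alpha}$'s and $C_{\alpha}$'s into composite witness objects $W_{\alpha}$ in an auxiliary presentable $\infty$-category (such as $\mathcal{K}^{\Delta^{1}} \times \mathcal{K}$, or a comma construction encoding the non-orthogonality datum) whose mapping-space pattern reflects the asymmetry $\Map(C_{\beta}, Z_{\alpha}) \simeq \ast$ for $\beta < \alpha$ together with its failure at $\beta = \alpha$.

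This last recombination step is the main obstacle. A priori the ambient presentable $\infty$-category has many morphisms between arbitrary pairs of objects, so it is not automatic that the $W_{\alpha}$ will realize the isolated-objects pattern required by \Cref{altvopenkaivopenka}. Carrying this through will likely require using the combinatoriality of $\mathbf{M}$ in an essential way, namely to bound the presentation size of all the $Z_{\alpha}$'s and $C_{\alpha}$'s by a common regular cardinal $\kappa$, confining the transfinite induction to an essentially small fragment of $\mathbf{M}^{\circ}$ where the mapping spaces between the $W_{\alpha}$'s can be computed directly and shown to enforce the required rigidity. The delicate cardinal-arithmetic bookkeeping involved is where I expect the bulk of the technical work of \cite{homotopyorthogonalsubcategory}, 1.3 lies.
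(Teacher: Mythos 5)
Your high-level plan---build a transfinite chain $(f_{\alpha}, Z_{\alpha})$ with $Z_{\alpha}$ orthogonal to $f_{\beta}$ for $\beta < \alpha$ but not to $f_{\alpha}$, then derive a contradiction with Vop\v{e}nka's principle---is a reasonable first instinct and captures the general flavor of many VP-based arguments. But the step you flag as the ``main obstacle'' is in fact a genuine gap, not a routine technicality, and it cannot be closed in the way you indicate. The difficulty is that failure of homotopy orthogonality is a statement about mapping spaces not being equivalences; it does not translate into the \emph{existence} or \emph{non-existence} of a morphism between encoded objects $W_{\alpha}$ and $W_{\beta}$. In particular, your suggestion to pass to a product $\mathcal{K}^{\Delta^{1}} \times \mathcal{K}$ or a comma construction cannot work as stated: mapping spaces in such auxiliary categories are (homotopy) products or pullbacks of mapping spaces of $\mathcal{K}$, which will generally be non-empty regardless of the orthogonality data. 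Moreover, your chain only controls the pattern $Z_{\alpha}\perp f_{\beta}$ for $\beta<\alpha$ and $Z_{\alpha}\not\perp f_{\alpha}$, i.e.\ it gives a one-sided asymmetry; a contradiction via \Cref{altvopenkaivopenka} needs genuine two-sided rigidity (both that certain mapping spaces are empty \emph{and} that self-mapping spaces are contractible), which the construction does not supply.

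The paper does not give its own proof of this lemma (it cites \cite{homotopyorthogonalsubcategory}), but the proof of the dual \Cref{coorthogonality1.3} makes clear what the key technical ingredient is, and it is the one your sketch is missing: the class of $\mathcal{S}$-local objects is closed under $\lambda$-filtered colimits in the underlying locally presentable category (\cite{homotopyorthogonalsubcategory}, Lemma 1.2; compare \Cref{coorthogonality1.2} here). This is not cardinal bookkeeping but the structural fact that carries the argument. Combined with closure under products and with Vop\v{e}nka's principle, closure under $\lambda$-filtered colimits puts $\mathcal{S}^{\bot}$ in the range of the Ad\'{a}mek--Rosick\'{y} machinery for full subcategories of locally presentable categories, from which one concludes accessibility (indeed reflectivity), and the desired set $\mathcal{X}$ can then be read off, e.g.\ from reflection units on a generating set. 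Your vague ``confine the transfinite induction to an essentially small fragment'' is really a shadow of this filtered-colimit-closure lemma; without isolating and proving that closure statement, the transfinite construction has no lever with which to terminate, and the intended VP contradiction cannot be assembled.
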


The next theorem appears, with two different proof strategies, as \cite{homotopyorthogonalsubcategory}, Theorem 1.4, and \cite{RosickyTholenleft-determined}, Theorem 2.3. We will spell out the former proof, based on \Cref{orthogonal1.3}, because we will later on resort to an analogous strategy in the dual case.

\begin{thm} \label{leftbousfieldvopenka}
Assuming $\vp$, then a left Bousfield localization at any class of morphisms $\mathbb{S}$ in a left proper combinatorial model category $\mathbf{M}$ exists and it is combinatorial and left proper. Moreover, if $\mathbf{M}$ is simplicial, then so is such localization.
\end{thm}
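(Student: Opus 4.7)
The plan is to use Vop\v{e}nka's principle, via \Cref{orthogonal1.3}, to replace the possibly proper class $\mathbb{S}$ by a small set of morphisms $\mathcal{X}$ having the same class of local objects, and then invoke Hirschhorn's existence result \Cref{hirschhornleft} applied to $\mathcal{X}$.

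Concretely, I would first denote by $\mathcal{D}$ the class of $\mathbb{S}$-local objects in $\mathbf{M}$ and by $\mathcal{S} = \mathcal{D}^{\bot}$ the class of $\mathbb{S}$-local equivalences. Every morphism in $\mathbb{S}$ is tautologically homotopy orthogonal to every object in $\mathcal{D}$, so $\mathbb{S} \subseteq \mathcal{S}$; passing to orthogonals reverses inclusions, giving an inclusion of the objects homotopy orthogonal to $\mathcal{S}$ into the objects homotopy orthogonal to $\mathbb{S}$, which is $\mathcal{D}$ by definition. The reverse inclusion is immediate from the definition of $\mathbb{S}$-local equivalence, hence the class of objects homotopy orthogonal to $\mathcal{S}$ coincides with $\mathcal{D}$. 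Applying \Cref{orthogonal1.3} to $\mathcal{D}$ then yields a \emph{set} of morphisms $\mathcal{X}$ such that the objects homotopy orthogonal to $\mathcal{X}$ coincide with those homotopy orthogonal to $\mathcal{S}$, i.e.\ with $\mathcal{D}$. Since the class of local equivalences is determined by the class of local objects, it follows that the $\mathcal{X}$-local equivalences agree with the $\mathbb{S}$-local equivalences.

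Once this reduction is in place, I would apply \Cref{hirschhornleft} directly to the set $\mathcal{X}$. It produces a combinatorial, left proper model structure $L_{\mathcal{X}} \mathbf{M}$ on the underlying category of $\mathbf{M}$, with the same cofibrations as $\mathbf{M}$, with the $\mathcal{X}$-local equivalences as weak equivalences, and simplicial whenever $\mathbf{M}$ is. By the identification above, these weak equivalences are precisely the $\mathbb{S}$-local equivalences, so $L_{\mathcal{X}} \mathbf{M}$ simultaneously satisfies the definition of a left Bousfield localization of $\mathbf{M}$ at $\mathbb{S}$, and inherits the desired structural properties from Hirschhorn's theorem.

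The whole set-theoretic content of the argument is concentrated in the appeal to \Cref{orthogonal1.3}, and this is where I expect the only real obstacle to lie: without $\vp$, a proper class $\mathbb{S}$ need not admit any small set of morphisms with the same orthogonal, so Hirschhorn's machinery has nothing to act on. The passage from the set $\mathcal{X}$ back to $\mathbb{S}$, by contrast, is purely a bookkeeping exercise about classes of local objects and local equivalences.
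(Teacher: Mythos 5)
Your proposal is correct and takes essentially the same route as the paper: define $\mathcal{D} = \mathbb{S}^{\bot}$ and $\mathcal{S} = \mathcal{D}^{\bot}$, use the triple-orthogonal identity to show that \Cref{orthogonal1.3} yields a small set $\mathcal{X}$ with $\mathcal{X}^{\bot} = \mathbb{S}^{\bot}$, and feed $\mathcal{X}$ into \Cref{hirschhornleft}. The only cosmetic difference is that you spell out the standard argument for $\mathcal{S}^{\bot} = \mathbb{S}^{\bot}$ that the paper merely states as the chain $((\mathbb{S}^{\bot})^{\bot})^{\bot} = \mathbb{S}^{\bot}$.
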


\begin{proof}
Let $\mathcal{D} := \mathbb{S}^{\bot}$, and $\mathcal{S} := \mathcal{D}^{\bot} = (\mathbb{S}^{\bot})^{\bot}$. Then we have

\begin{center}
$\mathcal{S}^{\bot} = (\mathcal{D}^{\bot})^{\bot} = ((\mathbb{S}^{\bot})^{\bot})^{\bot} = \mathbb{S}^{\bot}$.
\end{center}

By \Cref{orthogonal1.3}, there is a set of maps $\mathcal{X}$ such that $\mathcal{X}^{\bot} = \mathcal{S}^{\bot} = \mathbb{S}^{\bot}$. In other words, the left Bousfield localization generated by $\mathbb{S}$ corresponds with that generated by $\mathcal{X}$, which exists and is combinatorial, simplicial and left proper by \Cref{hirschhornleft}.
\end{proof}

\begin{prop} \label{reflectivethensmallorth}
Assume $\vp$, and let $\mathcal{C}$ be a presentable $\infty$-category. Then every internal localization of $\mathcal{C}$ at a class of morphisms can be expressed as the internal localization at a set of morphisms.
\end{prop}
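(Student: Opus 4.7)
The plan is to transport the problem from $\mathcal{C}$ to a combinatorial simplicial model presentation of it, apply \Cref{orthogonal1.3}, and then transport back, in a manner analogous to the proof of \Cref{leftbousfieldvopenka}. The target is to produce, given a class $S$ of morphisms in $\mathcal{C}$, a set $X$ of morphisms with the same class of local objects, since the internal localization depends only on that class.

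First, by \cite{HTT}, Proposition A.3.7.6, I choose a combinatorial simplicial model category $\mathbf{M}$ with $\N_{\Delta}(\mathbf{M}^{\circ}) \simeq \mathcal{C}$. Each morphism in $S$ can be represented, up to equivalence, by a morphism in $\mathbf{M}^{\circ}$ between cofibrant-fibrant objects, yielding a class $\mathbb{S}$ of morphisms in $\mathbf{M}$. The key observation, which I will verify, is that the simplicial mapping spaces $\mathbf{M}(QX,PY)$ model the $\infty$-categorical mapping spaces of $\mathcal{C}$, so that homotopy orthogonality in $\mathbf{M}$ corresponds bijectively to $\infty$-categorical orthogonality in $\mathcal{C}$. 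In particular, an object of $\mathbf{M}^{\circ}$ is $\mathbb{S}$-local in the model-theoretic sense if and only if, viewed as an object of $\mathcal{C}$, it is $S$-local in the sense of internal localizations.

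Next, I set $\mathcal{D} := \mathbb{S}^{\bot}$, the class of $\mathbb{S}$-local objects of $\mathbf{M}$, and invoke \Cref{orthogonal1.3} (which requires $\vp$) to obtain a set $\mathcal{X}$ of morphisms in $\mathbf{M}$ with $\mathcal{X}^{\bot} = \mathcal{D}^{\bot}$. The same chain of identities exploited in the proof of \Cref{leftbousfieldvopenka},
\begin{center}
$\mathcal{X}^{\bot} = \mathcal{D}^{\bot} = ((\mathbb{S}^{\bot})^{\bot})^{\bot} = \mathbb{S}^{\bot}$,
\end{center}
shows that $\mathcal{X}$ and $\mathbb{S}$ have exactly the same class of local objects in $\mathbf{M}$.

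Finally, let $X$ be the set of morphisms of $\mathcal{C}$ corresponding to $\mathcal{X}$ under the identification $\N_{\Delta}(\mathbf{M}^{\circ}) \simeq \mathcal{C}$. By the correspondence between homotopy orthogonality and $\infty$-categorical orthogonality recalled above, the class of $X$-local objects in $\mathcal{C}$ equals the class of $S$-local objects, and hence $X^{-1}\mathcal{C} = S^{-1}\mathcal{C}$ as full subcategories of $\mathcal{C}$, which is what is required. The main obstacle is purely the bookkeeping in this last translation: one must verify that working with cofibrant-fibrant representatives in $\mathbf{M}$ faithfully captures the $\infty$-categorical notion of orthogonality, which ultimately rests on the standard identification of Dwyer-Kan mapping spaces in $\mathbf{M}^{\circ}$ with the hom-spaces of $\mathcal{C}$; the rest of the argument is a direct application of \Cref{orthogonal1.3}.
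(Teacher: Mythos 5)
Your proof is correct in spirit and takes a genuinely more direct route than the paper. The paper's argument constructs the left Bousfield localization $L_{\mathbb{S}}\mathbf{M}$ (via \Cref{leftbousfieldvopenka}, which requires left properness and needs to be verified for the chosen presentation $\mathbf{M}$), passes it through the simplicial nerve to obtain a reflective, presentable subcategory, identifies that subcategory with $S^{-1}\mathcal{C}$, and only then appeals to a characterization in \cite{HTT}, Proposition 5.5.4.2 to conclude it is a small orthogonality class. You instead apply \Cref{orthogonal1.3} directly, which makes no properness demand, then transport the resulting set back along $\mathcal{C} \simeq \N_{\Delta}(\mathbf{M}^{\circ})$. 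This is shorter and avoids the properness bookkeeping entirely; what you give up is the additional information the paper extracts along the way (reflectivity and presentability of $S^{-1}\mathcal{C}$, which also reproves the implication $(2) \Rightarrow (4)$ of \Cref{localizationreflective}).

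Two small points worth fixing. First, your application of \Cref{orthogonal1.3} is misstated: the lemma takes as input a class of \emph{objects} $\mathcal{D}$, forms $\mathcal{S} := \mathcal{D}^{\bot}$, and outputs a set of maps $\mathcal{X}$ with $\mathcal{X}^{\bot} = \mathcal{S}^{\bot}$, i.e.\ $\mathcal{X}^{\bot} = (\mathcal{D}^{\bot})^{\bot}$, not $\mathcal{X}^{\bot} = \mathcal{D}^{\bot}$ (which would be a type mismatch, since $\mathcal{D}^{\bot}$ is a class of morphisms while $\mathcal{X}^{\bot}$ is a class of objects). With $\mathcal{D} := \mathbb{S}^{\bot}$ the correct chain is $\mathcal{X}^{\bot} = (\mathcal{D}^{\bot})^{\bot} = ((\mathbb{S}^{\bot})^{\bot})^{\bot} = \mathbb{S}^{\bot}$, which is what you wanted, but your displayed chain drops a $\bot$ in two places. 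Second, the set $\mathcal{X}$ produced by \Cref{orthogonal1.3} need not consist of morphisms between bifibrant objects, so the phrase ``let $X$ be the set of morphisms of $\mathcal{C}$ corresponding to $\mathcal{X}$'' requires one more sentence: replace each $f \in \mathcal{X}$ by a weakly equivalent morphism in $\mathbf{M}^{\circ}$ (homotopy orthogonality is invariant under weak equivalence in both arguments), and then transport. Neither of these affects the validity of the approach.
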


We would like to use \cite{HTT}, Proposition 5.5.4.2(3), in particular we want to use (i) to obtain (iii). We will prove the stronger result that the internal localization is presentable. Unfortunately, in order to do so, we will have to reconstruct the whole $\mathcal{C}$ and its localization in a way that makes presentability of the latter referrable to previously known results. In particular we will exhibit the localization as a reflective subcategory, so incidentally this also constitutes an alternative proof of $(2) \Rightarrow (4)$ in the $\infty$-categorical version of \Cref{localizationreflective}.

\begin{proof}
Let $S$ be a class of morphisms in $\mathcal{C}$. Our goal, as mentioned right above, is to prove that $S^{-1} \mathcal{C}$ is presentable. As a side consequence of the strategy adopted, it will be automatically clear that it is a reflective subcategory, too.\\
By \cite{HTT}, Proposition A.3.7.6, there exists a combinatorial simplicial model category $\mathbf{M}$ and an equivalence $\mathcal{C} \simeq \N_{\Delta}(\mathbf{M}^{\circ})$. Moreover, if we follow the constrution given there, and observe that the injective model structure is left proper, we obtain that $\mathbf{M}$ is left proper, too (by \cite{HTT}, Proposition A.3.7.3). By \Cref{leftbousfieldvopenka}, if $\mathbb{S} \subseteq \mathbf{M}$ is the class of morphisms that give rise to morphisms of $S$ through the equivalence above, then the left Bousfield localization of $\mathbf{M}$ at $\mathbb{S}$ exists and it is combinatorial and simplicial. Since the underlying category of $L_{\mathbb{S}} \mathbf{M}$ is the same as the underlying category of $\mathbf{M}$, the adjunction (consisting of identities in both directions) presenting it as a Bousfield localization of $\mathbf{M}$ is simplicial. This adjunction then induces one between $\infty$-categories after being pulled through the simplicial nerve (\cite{HTT}, Proposition 5.2.4.6), so that we obtain an adjunction

\begin{center}
$\N_{\Delta} (L_{\mathbb{S}} \mathbf{M}^{\circ}) \leftrightarrows \N_{\Delta} (\mathbf{M}^{\circ}) \simeq \mathcal{C}$.
\end{center}

An inspection of mapping spaces reveals that the right adjoint is fully faithful. Moreover, since $L_{\mathbb{S}} \mathbf{M}$ is combinatorial and simplicial, a further application of \cite{HTT}, Proposition A.3.7.6 shows that $\N_{\Delta} (L_{\mathbb{S}} \mathbf{M}^{\circ})$ is presentable, therefore the only point left to prove is that $\N_{\Delta} (L_{\mathbb{S}} \mathbf{M}^{\circ}) \simeq S^{-1} \mathcal{C}$ or, equivalently, that an object $X \in \mathbf{M}^{\circ}$ is fibrant in the localized model structure if and only if it is $S$-local as an object of $\mathcal{C}$.\\
For this, observe that the former condition is equivalent to $X$ being $\mathbb{S}$-local by \cite{HTT}, Proposition A.3.7.3. Observe now that the domains and codomains of all morphisms in $\mathbb{S}$ are bifibrant by construction, therefore the morphisms in $S$ exhaust all $\mathbb{S}$, so that another simple inspection of mapping spaces yields that, for bifibrant objects of $\mathbf{M}$, $S$-localness is equivalent to $\mathbb{S}$-localness.
\end{proof}

\begin{rem} \label{vpandivp}
Although the proposition above concerns $\infty$-categorical objects, the 1-dimensional version of Vop\v{e}nka's principle is a sufficient hypothesis to prove it. Considering that, in the 1-dimensional world, the coimplication $(2) \Leftrightarrow (3)$ is equivalent to $\vp$, this seems to be a good reason to believe that in fact $\vp \Leftrightarrow \ivp$, although for the moment it remains conjectural.
\end{rem}

\section{Cellularizations under Vop\v{e}nka's principle(s)} \label{sectioncellularizations}
In this section we will focus on the dual notion of internal localizations, that is, internal colocalizations. First, we recall a 1-categorical result stated in \cite{LocPresAccCat}, Corollary 6.29, making it a partial analog to \Cref{localizationreflective}.

\begin{prop} \label{colocalizationcoreflective}
Assuming $\vp$, let $\mathcal{C}$ be a presentable category, and let $\mathcal{A}$ be a full subcategory. Then the following statements are equivalent:

\begin{enumerate}
\item $\mathcal{A}$ is closed under small colimits in $\mathcal{C}$;
\item $\mathcal{A}$ is the colocalization at a class of morphisms $S$;
\item $\mathcal{A}$ is coreflective in $\mathcal{K}$.
\end{enumerate}

Moreover, each of these properties implies presentability of $\mathcal{A}$.
\end{prop}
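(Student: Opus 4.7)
The plan is to handle the three easy implications first and then to focus all of the work on (1) $\Rightarrow$ (3). Without invoking any set-theoretic hypothesis: if $\mathcal{A}$ is coreflective then the inclusion is the left adjoint and so preserves colimits, yielding (3) $\Rightarrow$ (1); choosing $S$ to be the class $\{RC \to C \mid C \in \mathcal{C}\}$ of coreflection counits realizes $\mathcal{A}$ as a colocalization, giving (3) $\Rightarrow$ (2); and (2) $\Rightarrow$ (1) follows from the isomorphism $\mathcal{C}(\colim X_i, -) \cong \lim \mathcal{C}(X_i, -)$, which transports the co-orthogonality bijections through small colimits in the first argument.

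For the substantive implication (1) $\Rightarrow$ (3) I would apply the general adjoint functor theorem to the inclusion $\iota \colon \mathcal{A} \hookrightarrow \mathcal{C}$, which is colimit-preserving by (1); it then suffices to produce, for each $C \in \mathcal{C}$, a weakly terminal object in the slice $\mathcal{A}_{/C}$. This is the formal dual of the construction in \Cref{prodweaklyreflective}. Writing $\mathcal{A} = \bigcup_{\alpha \in \Ord} \mathcal{A}_\alpha$ as an ascending chain of small full subcategories, each slice $(\mathcal{A}_\alpha)_{/C}$ is essentially small by local smallness of $\mathcal{C}$, so one may form
\[
c_\alpha \colon C_\alpha := \coprod_{(A \to C) \in (\mathcal{A}_\alpha)_{/C}} A \longrightarrow C,
\]
with $C_\alpha \in \mathcal{A}$ by closure under small coproducts. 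If some $c_\alpha$ absorbs every arrow from $\mathcal{A}$ to $C$ it is the desired weak coreflection; otherwise a diagonal extraction produces a proper class $\mathbf{B}$ of ordinals such that $c_\beta$ does not factor through $c_\alpha$ whenever $\alpha < \beta$ in $\mathbf{B}$, while the inclusions of summands still supply morphisms $C_\alpha \to C_\beta$ in the presentable slice $\mathcal{C}_{/C}$. Once the weak coreflection is secured, it is promoted to an actual coreflection by coequalizing its (set-many, by local smallness) endomorphisms over $C$, which remains in $\mathcal{A}$ by closure under colimits; presentability of $\mathcal{A}$ then follows by accessible generation from the coreflections of a set of generators of $\mathcal{C}$.

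The hard part is extracting a contradiction with $\vp$ from the failure configuration above. In the dual reflective case of \Cref{prodweaklyreflective} the analogous chain carries morphisms from large to small ordinals, via the projections from products, which fits the shape of $\Ord^{op}$ that appears in $\wvp$ and $\swvp$; here the coproduct inclusions $C_\alpha \to C_\beta$ run from small to large, so the bare chain does not manifestly violate $\wvp$. To produce the full embedding of $\Ord$ forbidden by \Cref{vopenka}(2) one must rigidify the chain, killing the surplus morphisms $C_\alpha \to C_\beta$ beyond the inclusions, or alternatively quotient by non-identity endomorphisms to produce the discrete proper class forbidden by \Cref{vopenka}(1). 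This orientation asymmetry is, I expect, precisely what forces the hypothesis to be $\vp$ and not merely $\wvp$.
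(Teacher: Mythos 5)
The easy implications are fine, but your attempt at $(1) \Rightarrow (3)$ has a genuine gap, and you say so yourself: the dualization of \Cref{prodweaklyreflective} produces a chain whose natural morphisms (coproduct inclusions $C_\alpha \to C_\beta$ for $\alpha < \beta$) run \emph{up} the ordinal chain, which is exactly the wrong direction to produce a contradiction with any of the three formulations in \Cref{vopenka}. Formulation (3) of \Cref{vopenka} asks for the \emph{existence} of some $A_i \to A_j$ with $i < j$, which your configuration supplies in abundance, so no contradiction; formulation (2) asks for a \emph{full} embedding of $\Ord$, and your chain is manifestly not full; and formulation (1) asks for a rigid class, which your chain also is not. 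The passage ``one must rigidify the chain, killing the surplus morphisms \ldots or alternatively quotient by non-identity endomorphisms'' is precisely the missing content, and it is not at all clear that either rigidification can be carried out here. (Contrast with \Cref{prodweaklyreflective}, where the product construction gives arrows from large to small, matching $\Ord^{op}$ and hence $\swvp$ — that asymmetry is exactly why the reflective case needs only $\wvp$ while the coreflective one needs full $\vp$.) You correctly diagnosed the obstruction but left it as a conjecture rather than resolving it.

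Note also that the proof this paper is appealing to (Adámek--Rosický, Corollary 6.29) does not follow your route at all. As the paper itself remarks in its list of open questions, that proof goes through the special adjoint functor theorem: under $\vp$, every full subcategory of a locally presentable category is accessible and accessibly embedded, so closure under colimits makes $\mathcal{A}$ locally presentable, and then the colimit-preserving inclusion $\mathcal{A} \hookrightarrow \mathcal{C}$ has a right adjoint by SAFT (using that $\mathcal{C}$ is well-powered with a cogenerator). That is precisely the argument the paper says resists transfer to $\infty$-categories, and it bypasses the weak-coreflection/$\Ord$-chain machinery entirely. Your attempt is methodologically interesting — a successful dualization of \Cref{prodweaklyreflective} would also partially answer the paper's open question (3) — but as written it does not constitute a proof of $(1) \Rightarrow (3)$.
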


\begin{ex}
Even under $\vp$, it is not true that every colocalization of a presentable category can be expressed as the colocalization at a small set of morphisms, as demonstrated by the following example, provided to me by Ji\v{r}\'{i} Rosick\'{y} through personal communication.\\
Consider the class of all morphisms in $\Grp$. An object $G$ in its localization is co-orthogonal to the map $\mathbf{0} \to G$, so the identity $\id_G$ is equal to the zero morphism, which means that $G = \mathbf{0}$, and the colocalization at issue is simply $\{ \mathbf{0} \}$. We want to show that there is no set of morphisms exhibiting it as the corresponding co-orthogonality class.\\
Suppose by contradiction that such a set $S$ exists, then take $\kappa$ to be a cardinal bigger than the cardinalities of all domains and codomains in it. We can construct simple groups of arbitrarily large cardinality, so consider such a group $H$. For each diagram

\begin{center}
\begin{tikzcd}
& A \ar[d]\\
H \ar[ur, dotted] \ar[r] & B
\end{tikzcd}
\end{center}

the horizontal map is necessarily the zero morphism, because $H$ is simple and larger than $B$. Therefore, we can choose the zero morphism as the dotted arrow, and this choice is unique because $H$ is also bigger than $A$. So $H$ belongs to our small co-orthogonality class, which is a contradiction.
\end{ex}

Our strategy for investigating colocalizations in the $\infty$-world will be to make use of the dual notion of left Bousfield localizations, that is, of course, right Bousfield localizations. Although there is a well known existence result for right Bousfield localizations in combinatorial model categories, the fact that the definition of these is not self-dual entails a disparity between this and the existence result for left Bousfield localizations. Namely, while we normally left localize simply at small sets of morphisms, the following theorem tells us that the classes at which we can right localize are slightly more complicated: they are the classes of co-local morphisms with respect to small sets of \emph{objects}. In this case, the resulting localization is also known as the cellularization at a set of objects. Below, we will first show that under $\vp$ we can perform cellularization at arbitrarily large classes of objects, and this leads to the further consequence that cellularizations are not different from general right Bousfield localizations.

\begin{thm}[\cite{HirschhornModelCategories}, 5.1.1]
Let $\mathbf{M}$ be a right proper combinatorial model category, and let $\mathbf{K}$ be a set of objects in $\mathbf{M}$. Calling $\mathbb{S}_{\mathbf{K}}$ the class of morphisms that are homotopy co-orthogonal to $\mathbf{K}$, then the right Bousfield localization of $\mathbf{M}$ at $\mathbb{S}_{\mathbf{K}}$ exists and it is combinatorial and right proper. Moreover, if $\mathbf{M}$ is simplicial, then so is such localization.
\end{thm}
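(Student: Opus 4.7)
The plan is to construct the putative model structure $R_{\mathbf{K}}\mathbf{M}$ explicitly by specifying three classes and then invoking the recognition theorem for cofibrantly generated (and indeed combinatorial) model categories. The fibrations are those of $\mathbf{M}$, the weak equivalences are the morphisms in $\mathbb{S}_{\mathbf{K}}$, and the cofibrations are forced: they are the maps having the left lifting property with respect to every map that is both a fibration and an $\mathbb{S}_{\mathbf{K}}$-equivalence. To make all of this workable, everything has to be presented cofibrantly through explicit generating sets $I_{\mathbf{K}}$ and $J_{\mathbf{K}}$ built from $\mathbf{K}$.

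First I would fix a functorial cofibrant replacement $Q$ in $\mathbf{M}$ and form the set $\widetilde{\mathbf{K}} := \{QK : K \in \mathbf{K}\}$. In the simplicial case I would take
\[ I_{\mathbf{K}} := \bigl\{ \widetilde{K} \otimes \partial\Delta^n \hookrightarrow \widetilde{K} \otimes \Delta^n : \widetilde{K} \in \widetilde{\mathbf{K}},\, n \geq 0\bigr\} \]
as generating cofibrations, and for $J_{\mathbf{K}}$ the union of the original generating trivial cofibrations of $\mathbf{M}$ with the horn-type maps $\widetilde{K} \otimes \Lambda^n_i \hookrightarrow \widetilde{K} \otimes \Delta^n$. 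In the non-simplicial case, the same idea is carried out via Reedy-cofibrant cosimplicial framings $\widetilde{K}^{\bullet}$, replacing $\widetilde{K} \otimes -$ by the framing-tensor. Because $\mathbf{M}$ is combinatorial, every domain that appears is small relative to the generated cellular maps, so the small object argument applies and provides the two functorial factorizations.

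Next I would verify the model-category axioms. Two-out-of-three and closure under retracts for $\mathbb{S}_{\mathbf{K}}$ follow essentially from the fact that $\mathbb{S}_{\mathbf{K}}$ is defined via homotopy orthogonality against a fixed class of objects, together with the observation that fibrant replacement in $\mathbf{M}$ detects $\mathbb{S}_{\mathbf{K}}$. Factorizations come from the small object argument. The lifting axioms reduce, by the usual retract-of-pushout arguments, to proving that a relative $J_{\mathbf{K}}$-cell complex is both a cofibration and an $\mathbb{S}_{\mathbf{K}}$-equivalence, and dually that maps with the right lifting property against $I_{\mathbf{K}}$ are precisely the $\mathbb{S}_{\mathbf{K}}$-local fibrations. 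Right properness of the resulting structure, and preservation of the simplicial enrichment when $\mathbf{M}$ is simplicial, follow formally from right properness and the simplicial axiom of $\mathbf{M}$ because $I_{\mathbf{K}}$ is built from tensors of $\widetilde{\mathbf{K}}$ with boundary inclusions.

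The principal obstacle is the interaction between $J_{\mathbf{K}}$-cell complexes and the class $\mathbb{S}_{\mathbf{K}}$: one has to show that transfinite compositions of pushouts of the horn-type generators remain $\mathbf{K}$-colocal equivalences, and that the class $\mathbb{S}_{\mathbf{K}}$ is stable under pullback along fibrations. This is exactly where right properness of $\mathbf{M}$ is used in an essential way, and where the combinatoriality hypothesis, which guarantees the existence of a regular cardinal bounding the smallness of the relevant domains, prevents the cellular recognition argument from breaking down. Once this is secured, the model structure so produced is, by construction, universal among right Quillen functors out of $R_{\mathbf{K}}\mathbf{M}$ inverting $\mathbb{S}_{\mathbf{K}}$, so it deserves the name of right Bousfield localization.
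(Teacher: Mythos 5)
Note first that the paper does not supply a proof of this theorem: it is quoted from Hirschhorn (who in fact states it for cellular rather than combinatorial model categories) and is used as a black box, so the only useful comparison is with Hirschhorn's own argument.

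Your sketch has a genuine gap in the choice of generating cofibrations. You declare $I_{\mathbf{K}}$ to be the ``horn set'' $\{\widetilde{K}\otimes\partial\Delta^n\hookrightarrow\widetilde{K}\otimes\Delta^n\}$ and then claim that its right-lifting class is exactly the class of colocal trivial fibrations, i.e.\ the fibrations of $\mathbf{M}$ that are $\mathbf{K}$-colocal equivalences. This fails in both directions. By the simplicial adjunction, $p$ lies in $I_{\mathbf{K}}$-inj precisely when each $\Map(\widetilde{K},p)$ is a trivial Kan fibration, but nothing forces such a $p$ to be a fibration of $\mathbf{M}$; conversely, a fibration $p\colon E\to B$ that is a colocal equivalence need only have $\Map(\widetilde{K},p)$ a weak equivalence when $B$ is fibrant, since homotopy function complexes are computed on fibrant targets. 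So $I_{\mathbf{K}}$-inj and the colocal trivial fibrations are incomparable in general, and the recognition theorem cannot be launched from these data (unioning with $J$ does not repair this). The essential content of Hirschhorn's proof is precisely that the generating set for the colocal cofibrations cannot be written down explicitly in this way: one defines the colocal cofibrations abstractly, as maps with LLP against colocal trivial fibrations, shows they form a subclass of the cofibrations of $\mathbf{M}$, and then uses a boundedness/cardinality argument — this is exactly where the cellularity (or, in the statement here, combinatoriality) hypothesis is spent — to extract a set of generators among the colocal cofibrations with suitably small domain and codomain. The horn set $\Lambda(\mathbf{K})$ enters only as a device to recognize colocal objects and colocal equivalences, not as a generating set. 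Correspondingly, the hard part of the argument is the (cofibration, trivial fibration) factorization, not the $J$-cell step you single out; since the fibrations are unchanged one may simply keep $J_{\mathbf{K}}=J$, and enlarging it with horn-type maps is harmless but unnecessary.
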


Unfortunately, the assumption of being right proper is an unreasonable one for our purposes, because every presentable $\infty$-category can be shown to underlie a combinatorial simplicial model category which is left, but not right proper. It is for this reason that we need the technology of semi-model categories: Bousfield localizations are still possible even without any properness assumption, as it is made explicit by the following theorem. We will only formulate it for right semi-model categories, but a proof in the case of weak model categories is given in \cite{Combinatorialweakmodel}, Remark 7.10 + Theorem 7.11. We need one preliminary definition, a generalization to homotopy (co-)orthogonality to semi-model categories. It is given in \cite{Barwickleftright}, Notation 3.61 in terms of hammock localizations, but any rephrasing in terms of mapping spaces in the underlying $\infty$-category yields of course an equivalent definition.

\begin{defn}
Let $\mathbf{M}$ be a weak or semi-model category, let $X$ be an object and $f: A \to B$ a morphism in $\mathbf{M}$. We say that $X$ and $f$ are homotopy co-orthogonal if the induced map

\begin{center}
$\mathbf{M}^{\Delta}(X,A) \to \mathbf{M}^{\Delta}(X,B)$
\end{center}

is a homotopy equivalence of simplicial sets, where $\mathbf{M}^{\Delta}(-,-)$ denotes the mapping space in the underlying $\infty$-category.\\
We also say that $X$ is $f$-colocal, or that $f$ is an $X$-colocal equivalence.
\end{defn}

\begin{thm}[\cite{Barwickleftright}, 5.6, 5.13, 5.16, 5.19] \label{weakrightbousfield}
Let $\mathbf{M}$ be a combinatorial right semi-model category, and let $\mathbf{K}$ be a set of objects in $\mathbf{M}$. Calling $\mathbb{S}_{\mathbf{K}}$ the class of morphisms that are homotopy co-orthogonal to $\mathbf{K}$, then the right Bousfield localization $R_{\mathbb{S}_{\mathbf{K}}} \mathbf{M}$ of $\mathbf{M}$ at $\mathbb{S}_{\mathbf{K}}$ exists and it is combinatorial. Moreover, a bifibrant object of $\mathbf{M}$ is cofibrant in $R_{\mathbb{S}_{\mathbf{M}}} \mathbf{M}$ if and only if it is $\mathbf{K}$-colocal.
\end{thm}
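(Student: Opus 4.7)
The plan is to construct $R_{\mathbb{S}_{\mathbf{K}}} \mathbf{M}$ as a combinatorial right semi-model structure on the underlying category of $\mathbf{M}$, by declaring the weak equivalences to be the $\mathbb{S}_{\mathbf{K}}$-colocal equivalences, the fibrations to be the fibrations of $\mathbf{M}$, and the cofibrations to be those morphisms with the left lifting property against the trivial fibrations. Once these three classes are fixed, the content of the theorem is that they assemble into a combinatorial semi-model structure and that, on bifibrant objects, cofibrancy in the localization agrees with $\mathbf{K}$-colocality.

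First, I would establish a workable mapping-space characterization of the new weak equivalences: for bifibrant $X,Y$, a morphism $f: X \to Y$ is a colocal equivalence iff $\mathbf{M}^{\Delta}(K,f)$ is a weak equivalence of simplicial sets for every $K \in \mathbf{K}$. This follows because $\mathbf{K}$-colocal objects and $\mathbb{S}_{\mathbf{K}}$-colocal objects (when restricted to the bifibrant locus) define the same class, by the very definition of $\mathbb{S}_{\mathbf{K}}$. From this characterization, 2-out-of-3 and closure under retracts are inherited directly from the corresponding properties in $\sSet$, and in particular every weak equivalence of $\mathbf{M}$ remains a weak equivalence. Next, I would produce a small generating set of trivial cofibrations by combining the generating cofibrations $I$ of $\mathbf{M}$ with boxes of the form $K \otimes (\Lambda^n_i \hookrightarrow \Delta^n)$, where $K$ ranges over $\mathbf{K}$ and $\otimes$ uses a fixed cosimplicial resolution of $K$. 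Applying the small object argument to this set provides functorial factorizations, and the mapping-space criterion above lets us verify that the left factor is indeed a colocal equivalence with the correct lifting property.

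The main obstacle is verifying the factorization and lifting axioms in the absence of right properness. Hirschhorn's argument for right Bousfield localization relies heavily on right properness to propagate colocal information through pullbacks along fibrations; without this assumption the factorization of an arbitrary morphism as a trivial cofibration followed by a fibration cannot be guaranteed. This is precisely the reason one drops down to a right semi-model structure: the factorization and lifting axioms are then only required for morphisms whose source is cofibrant in $\mathbf{M}$, and on this restricted class the mapping-space characterization is well-behaved enough that the small object argument produces acceptable factorizations. Finally, the equivalence between bifibrant cofibrant objects of $R_{\mathbb{S}_{\mathbf{K}}} \mathbf{M}$ and bifibrant $\mathbf{K}$-colocal objects is formal: cofibrancy in the localization is detected by the LLP against trivial fibrations, and on bifibrant objects this translates, via the mapping-space criterion and the defining property of $\mathbb{S}_{\mathbf{K}}$, to homotopy co-orthogonality with every element of $\mathbf{K}$.
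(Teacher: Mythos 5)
This theorem is cited from Barwick's paper rather than proved in the present one, so there is no internal proof to compare against; judged on its own terms, your overall plan (keep the fibrations, enlarge the weak equivalences to the colocal equivalences, define cofibrations by lifting, produce a small generating set, run the small object argument, and relax the factorization axiom to a semi-model one where right properness fails) is the right shape. Two of the details, however, are genuinely reversed, and each would derail the argument rather than merely misstate it.

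First, the generating set. You propose a ``small generating set of trivial cofibrations by combining the generating cofibrations $I$ of $\mathbf{M}$ with boxes of the form $K \otimes (\Lambda^n_i \hookrightarrow \Delta^n)$.'' In a right Bousfield localization the fibrations do not change, hence the trivial cofibrations (the left class of the $(\textup{trivial cofibration}, \textup{fibration})$ weak factorization system) do not change either; the generating trivial cofibrations $J$ of $\mathbf{M}$ still serve, and there is nothing new to generate there. What does change, and shrinks, is the class of \emph{cofibrations}, and it is for these that one needs a new generating set. The standard choice (Hirschhorn for the proper case, Barwick in the semi-model case) is $J$ together with boundary boxes $\widetilde{K}\otimes\partial\Delta^n \hookrightarrow \widetilde{K}\otimes\Delta^n$ built from cosimplicial resolutions $\widetilde{K}$ of the objects $K\in\mathbf{K}$. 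Using $I$ rather than $J$, horns rather than boundaries, and aiming at trivial cofibrations rather than cofibrations are all symptoms of the left-localization picture, where it is the trivial cofibrations that grow, and the small-object-argument step would produce the wrong class if carried out as you describe.

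Second, the handedness of the semi-model weakening. You write that in the absence of right properness ``the factorization and lifting axioms are then only required for morphisms whose source is cofibrant in $\mathbf{M}$.'' That is the \emph{left} semi-model convention. In a right semi-model category, as defined in this paper and in Barwick, the weakened axiom is that cofibrations and trivial fibrations with \emph{fibrant codomain} form a weak factorization system. This is exactly the hypothesis that rescues the construction without right properness: the argument that the map having the right lifting property against the generating set above is a colocal equivalence goes through precisely when the target is fibrant. With those two corrections, your remaining steps --- the mapping-space characterization of colocal equivalences on the bifibrant locus, 2-out-of-3 and retract closure inherited from $\sSet$, and the identification of cofibrant bifibrant objects in $R_{\mathbb{S}_{\mathbf{K}}}\mathbf{M}$ with bifibrant $\mathbf{K}$-colocal objects --- line up with the cited Barwick results.
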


\begin{lem} \label{coorthogonality1.2}
Given a combinatorial right semi-model category $\mathbf{M}$ such that the underlying category of $\mathbf{M}$ is $\lambda$-presentable and there are a generating sets of cofibrations and trivial cofibrations between $\lambda$-compact objects, then the full subcategory of $\mathbf{M}^{[\mathbf{1}]}$ spanned by all weak equivalences between fibrant objects is closed under $\lambda$-filtered colimits.
\end{lem}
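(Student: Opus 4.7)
The plan is to apply the small object argument to the generating set $J$ of trivial cofibrations in order to factor morphisms, and to exploit the $\lambda$-accessibility of lifting conditions against generating sets whose elements have $\lambda$-compact source and target. The two immediate consequences of the hypothesis that I would use are: first, that being fibrant is the right lifting property against $J$, so fibrant objects are closed under $\lambda$-filtered colimits in $\mathbf{M}$; second, that trivial fibrations (characterized by the right lifting property against $I$) are likewise closed under $\lambda$-filtered colimits in the arrow category $\mathbf{M}^{[\mathbf{1}]}$.

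Next, I would take the functorial factorization $f \mapsto (\eta_f, \pi_f)$ produced by the small object argument applied to $J$, where $\eta_f$ is a relative $J$-cell complex (hence a trivial cofibration) and $\pi_f$ is a fibration. Because the sources and targets of maps in $J$ are $\lambda$-compact, the transfinite iteration producing this factorization converges by stage $\lambda$, and at each stage is a pushout along a coproduct of maps with $\lambda$-compact source; it follows that the whole factorization functor commutes with $\lambda$-filtered colimits in $\mathbf{M}^{[\mathbf{1}]}$.

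With these two ingredients in place, the conclusion goes as follows. Given a $\lambda$-filtered diagram $\{f_i: X_i \to Y_i\}$ of weak equivalences between fibrant objects, apply the factorization to each to obtain $f_i = \pi_i \circ \eta_i$. Each $\eta_i$ is a weak equivalence, so by 2-out-of-3 each $\pi_i$ is one as well, and being a fibration it is in fact a trivial fibration. Passing to the colimit, $X = \colim_i X_i$ and $Y = \colim_i Y_i$ are fibrant by the first observation; since the factorization commutes with $\lambda$-filtered colimits, $f = \pi \circ \eta$ with $\pi = \colim_i \pi_i$ a trivial fibration (by the second observation), and $\eta = \colim_i \eta_i$, being a $\lambda$-filtered colimit of relative $J$-cell complexes, itself such a complex and hence a trivial cofibration. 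A final application of 2-out-of-3 then yields that $f$ is a weak equivalence, as required.

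The main obstacle I expect is carefully verifying that the small object argument produces a functorial factorization that actually commutes with $\lambda$-filtered colimits for the prescribed $\lambda$, and handling the semi-model technicality that relative $J$-cell complexes are genuine weak equivalences (and not merely cofibrations) in this setting; both points are standard in the combinatorial semi-model framework of \cite{Barwickleftright}, but they are precisely where the hypotheses of $\lambda$-presentability of the underlying category and $\lambda$-compactness of the generating sets really enter.
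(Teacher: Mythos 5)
Your proof is correct and follows the same strategy as the paper: closure of fibrant objects from the $\lambda$-compact generating trivial cofibrations, closure of trivial fibrations between fibrant objects via the $\lambda$-accessibility of the lifting condition against the $\lambda$-compact generating set $I$, a $\lambda$-filtered-colimit-preserving functorial factorization produced by the small object argument, and 2-out-of-3. The one substantive difference concerns which factorization is used, and here your version is the one that actually works: you apply the small object argument to $J$, producing the (trivial cofibration, fibration) factorization, so that the second factor is a trivial fibration if and only if the original map is a weak equivalence. The paper's proof as written applies the small object argument to $I$ instead; but for a map with fibrant codomain the second factor of the $I$-factorization is automatically a trivial fibration (and its domain automatically fibrant), so the paper's stated characterization of the weak equivalences as the preimage under $T$ of the trivial fibrations between fibrant objects would be vacuous. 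Your $J$-factorization is almost certainly what the paper intended. One small inaccuracy in your writeup: a $\lambda$-filtered colimit of relative $J$-cell complexes need not itself be a relative $J$-cell complex; the correct justification for $\eta = \colim_i \eta_i$ being a trivial cofibration is simply that, since the factorization functor preserves $\lambda$-filtered colimits, $\eta$ coincides up to isomorphism with the left factor $\eta_f$ of the factorization of $f = \colim_i f_i$, which is a trivial cofibration by construction.
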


\begin{proof}
Since there is a generating set of trivial cofibrations between $\lambda$-compact objects, the class of fibrant objects is stable under $\lambda$-filtered colimits.\\
Let $I$ be a set of generating cofibrations between $\lambda$-compact objects. For $i: A \to B$ in $I$, define a functor $F_i: \mathbf{M}^{[\mathbf{1}]} \to \Set^{[\mathbf{1}]}$ sending a map $f: X \to Y$ to the map

\begin{center}
$\mathbf{M}(Y,A) \to \mathbf{M}(X,A) \times_{\mathbf{M}(X,B)} \mathbf{M}(Y,B)$
\end{center}

induced by $i$. Since $A$ and $B$ are $\lambda$-compact, this functor preserves $\lambda$-filtered colimits.\\
The full subcategory of $\mathbf{M}^{[\mathbf{1}]}$ spanned by all morphisms that have the right liting property against $i$ is precisely $F_i^{-1}(\catname{Epi})$, hence it is closed under $\lambda$-filtered colimits. It follows that the class of morphisms with fibrant codomain that lift against $i$ is closed under $\lambda$-filtered colimits.\\
The class of all trivial fibrations between fibrant objects is precisely the intersection of all these classes, ranging over $i \in I$, and is therefore itself closed under $\lambda$-filtered colimits.\\
To conclude, observe that the small object argument applied to $I$ yields a factorization of every morphism between fibrant objects as a cofibration followed by a trivial fibration, and this factorization can moreover be made into a $\lambda$-filtered colimit preserving functor. Calling $T$ the latter component of this functor, the class of weak equivalences between fibrant objects is precisely the preimage under $T$ of the class of trivial fibrations between fibrant objects. Since this latter class is closed under $\lambda$-filtered colimits, this concludes the proof.
\end{proof}




The following is an analog of \Cref{orthogonal1.3}, dealing with co-orthogonality instead of orthogonality, and exchanging the roles of maps and objects. It appears as \cite{Chornycellularization}, Lemma 1.3 for model categories, the present version is adapted for right semi-model categories.

\begin{lem} \label{coorthogonality1.3}
Assuming $\vp$, let $\mathcal{S}$ be any class of morphisms in a combinatorial right semi-model category $\mathbf{M}$, and let $\mathcal{D} = \mathcal{S}^{\top}$ be the class of objects that are homotopy co-orthogonal to $\mathcal{S}$. Then there exists a set of objects $\mathcal{X}$ such that $\mathcal{X}^{\top} = \mathcal{D}^{\top}$.
\end{lem}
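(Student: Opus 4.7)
The plan is to follow the strategy of \cite{Chornycellularization}, Lemma 1.3, in a manner parallel to the way \Cref{leftbousfieldvopenka} was handled, making the adjustments needed to work in a right semi-model category rather than a full model category. The inclusion $\mathcal{X}^{\top} \supseteq \mathcal{D}^{\top}$ is automatic for any $\mathcal{X} \subseteq \mathcal{D}$, so the substance lies in producing $\mathcal{X}$ with $\mathcal{X}^{\top} \subseteq \mathcal{D}^{\top}$.

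First, I would fix a regular cardinal $\lambda$ witnessing combinatoriality of $\mathbf{M}$ and let $\mathcal{X}_0$ be a set of representatives of bifibrant $\lambda$-compact objects lying in $\mathcal{D}$. I would then construct an increasing transfinite chain $(\mathcal{X}_\alpha)_{\alpha \in \Ord}$ of subsets of $\mathcal{D}$ as follows. At successor stages, if $\mathcal{X}_\alpha^{\top}$ has already reached $\mathcal{D}^{\top}$, we halt and set $\mathcal{X} := \mathcal{X}_\alpha$. Otherwise, pick a morphism $f_\alpha \in \mathcal{X}_\alpha^{\top} \setminus \mathcal{D}^{\top}$ and use the failure $f_\alpha \notin \mathcal{D}^{\top}$ to extract an object $D_\alpha \in \mathcal{D}$ that is not homotopy co-orthogonal to $f_\alpha$; set $\mathcal{X}_{\alpha+1} := \mathcal{X}_\alpha \cup \{D_\alpha\}$ and take unions at limit ordinals. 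If the recursion ever halts, the lemma is proved.

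Suppose for contradiction that the recursion never terminates. We then obtain a proper class $(D_\alpha)_{\alpha \in \Ord}$ in $\mathcal{D}$ and an accompanying family $(f_\alpha)$ with the following rigidity pattern: whenever $\alpha < \beta$ we have $D_\alpha \in \mathcal{X}_\beta$, so $f_\beta$ is homotopy co-orthogonal to $D_\alpha$, while $f_\beta$ fails to be co-orthogonal to $D_\beta$ itself. The remaining step is to convert this into a violation of $\vp$. I would pass to the presentable $\infty$-category $\mathcal{C} = \N_\Delta(\mathbf{M}^{\circ})$ and, from each pair $(D_\alpha, f_\alpha \colon A_\alpha \to B_\alpha)$, build an auxiliary object in $\mathcal{C}$ (for instance a homotopy fiber of $\mathbf{M}^{\Delta}(-, A_\alpha) \to \mathbf{M}^{\Delta}(-,B_\alpha)$ made representable via presentability) whose mapping spaces from $D_\gamma$ are contractible precisely when $f_\alpha$ is co-orthogonal to $D_\gamma$. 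The pattern above should then translate into a proper class of objects in the presentable $\infty$-category $\mathcal{C}$ laid out incompatibly with \Cref{altvopenkaivopenka}(2), producing the desired contradiction.

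The main obstacle is precisely this last encoding step: the raw data records failures of equivalences between homotopy mapping spaces rather than absences of morphisms, so a direct appeal to a rigidity formulation of $\vp$ is unavailable and the detour through a presentable $\infty$-categorical construction is unavoidable. A secondary, more technical difficulty, particular to the semi-model setting, is to ensure that the transfinite recursion behaves well at limit stages: one needs that homotopy co-orthogonality of objects in $\mathcal{D}$ to the morphisms $f_\alpha$ is preserved through $\lambda$-filtered colimits of weak equivalences between fibrant objects, which is exactly what \Cref{coorthogonality1.2} is designed to guarantee and which replaces the use of properness in Chorny's original argument.
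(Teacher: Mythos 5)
Your proposal takes a genuinely different route from the paper, and the gap you flag at the end is a real one. The paper's proof is short: it observes that the argument of Chorny's Lemma 1.3 (\cite{Chornycellularization}) carries over to combinatorial right semi-model categories once one knows that $\mathcal{D}$ and $(\mathcal{X}^{\top})^{\top}$ are closed under $\lambda$-filtered colimits, and that is exactly what \Cref{coorthogonality1.2} supplies (together with the remark that it suffices to treat fibrant objects, homotopy co-orthogonality being invariant under weak equivalence). Chorny's argument runs through accessibility: $\mathcal{D}$ is a full subcategory of a presentable category closed under $\lambda$-filtered colimits, hence under $\vp$ it is accessible, and a set $\mathcal{X}$ of presentable generators of $\mathcal{D}$ does the job because co-orthogonality to a fixed morphism passes along the filtered colimit presentations of objects of $\mathcal{D}$. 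No rigid class has to be manufactured by hand.

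Your proposal instead runs a transfinite recursion and then tries to convert the resulting pattern into a direct violation of a rigidity form of $\vp$. That conversion is precisely where the argument stops being a proof, and you say so yourself. What the recursion hands you is a family of pairs $(D_\alpha, f_\alpha)$ with $f_\beta \perp D_\alpha$ for $\alpha < \beta$ and $f_\alpha \not\perp D_\alpha$; this is a statement about equivalences of mapping spaces, not about the presence or absence of morphisms, and you have no control at all for $\alpha > \beta$. The ``homotopy fiber of $\mathbf{M}^{\Delta}(-,A_\alpha) \to \mathbf{M}^{\Delta}(-,B_\alpha)$ made representable'' is only a sketch; for it to feed into \Cref{altvopenkaivopenka}(2) you would need mapping spaces from $D_\gamma$ to be empty exactly when $\gamma \neq \alpha$ and contractible when $\gamma = \alpha$, which the raw data does not give. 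In effect you are attempting to re-derive from scratch the consequence of $\vp$ that full subcategories closed under $\lambda$-filtered colimits are accessible, which is what the citation to Chorny (and ultimately to Ad\'amek--Rosick\'y) is there to avoid. Your closing observation about the role of \Cref{coorthogonality1.2} in replacing properness is correct and agrees with the paper, but the rest of the paper's proof is carried by the imported accessibility machinery, not by a recursion, and your substitute for that machinery is left unfinished.
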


\begin{proof}
The proof of \cite{Chornycellularization}, Lemma 1.3 adapts step by step, the only non-trivial point being verifying that $\mathcal{D}$ and $(\mathcal{X}^{\top})^{\top}$ are closed under $\lambda$-filtered colimits. For this, observe that the proof of \cite{Chornycellularization}, Lemma 1.2 readily adapts when all the objects in consideration are fibrant (weak equivalences between fibrant objects are closed under $\lambda$-filtered colimits by \Cref{coorthogonality1.2}). For the general case, observe that every object in a right semi-model category is weakly equivalent to a fibrant object, and our definition of homotopy co-orthogonality is clearly invariant under weak equivalence.
\end{proof}

\begin{prop} \label{cellularizationvopenka}
Assuming $\vp$, then the cellularization at any class of objects $\mathbf{K}$ in a combinatorial right semi-model category $\mathbf{M}$ exists and it is combinatorial.
\end{prop}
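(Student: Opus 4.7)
The plan is to dualize the strategy of the proof of \Cref{leftbousfieldvopenka}, replacing \Cref{orthogonal1.3} by its co-orthogonal analog \Cref{coorthogonality1.3} and substituting the semi-model existence result \Cref{weakrightbousfield} for \Cref{hirschhornleft}. The key observation is that, under $\vp$, the class $\mathbb{S}_{\mathbf{K}}$ of morphisms homotopy co-orthogonal to the possibly proper class $\mathbf{K}$ coincides with $\mathbb{S}_{\mathcal{X}}$ for some small set $\mathcal{X}$ of objects, so that the small-data existence theorem applies directly.

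First I would recall that, by \Cref{cellularization}, the cellularization of $\mathbf{M}$ at $\mathbf{K}$ is \emph{defined} as the right Bousfield localization of $\mathbf{M}$ at $\mathbb{S}_{\mathbf{K}}$, so it suffices to exhibit the latter as combinatorial. Let me denote by $\top$ the Galois connection provided by homotopy co-orthogonality: for a class $\mathbf{A}$ of objects, $\mathbf{A}^{\top}$ is the class of morphisms co-orthogonal to every object of $\mathbf{A}$, and for a class $\mathcal{S}$ of morphisms, $\mathcal{S}^{\top}$ is the class of objects co-orthogonal to every morphism in $\mathcal{S}$. In this notation, $\mathbb{S}_{\mathbf{K}} = \mathbf{K}^{\top}$.

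Next I would apply \Cref{coorthogonality1.3} to the class of morphisms $\mathcal{S} := \mathbf{K}^{\top}$. This produces a small set $\mathcal{X}$ of objects with
\[
\mathcal{X}^{\top} \;=\; \mathcal{S}^{\top\top} \;=\; \mathbf{K}^{\top\top\top}.
\]
The standard Galois-connection identity $\mathbf{A}^{\top\top\top} = \mathbf{A}^{\top}$ (obtained from $\mathbf{A} \subseteq \mathbf{A}^{\top\top}$ by applying the order-reversing operator $\top$, and conversely from $\mathbf{A}^{\top} \subseteq (\mathbf{A}^{\top})^{\top\top}$) then gives $\mathcal{X}^{\top} = \mathbf{K}^{\top} = \mathbb{S}_{\mathbf{K}}$, i.e., $\mathbb{S}_{\mathcal{X}} = \mathbb{S}_{\mathbf{K}}$.

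Finally, since $\mathcal{X}$ is now a genuine small set of objects, \Cref{weakrightbousfield} yields the existence of the right Bousfield localization $R_{\mathbb{S}_{\mathcal{X}}} \mathbf{M}$ and asserts its combinatoriality. By the equality $\mathbb{S}_{\mathcal{X}} = \mathbb{S}_{\mathbf{K}}$, this is exactly the cellularization of $\mathbf{M}$ at $\mathbf{K}$. The only substantive step -- replacing the class $\mathbf{K}$ by an equivalent set inducing the same class of colocal equivalences -- has been entirely packaged into \Cref{coorthogonality1.3}, which is also where $\vp$ is actually invoked; the present proposition then reduces to unwinding definitions and applying the Galois identity above.
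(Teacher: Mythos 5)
Your proof is correct and follows essentially the same route as the paper's own proof: both apply \Cref{coorthogonality1.3} to $\mathbb{S}_{\mathbf{K}} = \mathbf{K}^{\top}$, extract a small set $\mathcal{X}$ with $\mathcal{X}^{\top} = \mathbb{S}_{\mathbf{K}}$ via the Galois-connection identity $\mathbf{K}^{\top\top\top} = \mathbf{K}^{\top}$, and then invoke \Cref{weakrightbousfield} for the small set. The paper phrases the computation in terms of $\mathcal{D} := \mathbb{S}_{\mathbf{K}}^{\top}$ rather than writing $\mathcal{S}^{\top\top}$ directly, but this is only a notational difference.
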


This theorem is a version of \cite{Chornycellularization}, Theorem 1.4 that dispenses with the assumption of right properness. The proof is essentially unchanged.

\begin{proof}
Let $\mathbb{S}_{\mathbf{K}}:= \mathbf{K}^{\top}$ be the class of morphisms that are homotopy co-orthogonal to $\mathbf{K}$. We have to show that the right Bousfield localization at $\mathbb{S}_{\mathbf{K}}$ exists as a weak model category.\\
Let us define $\mathcal{D} := \mathbb{S}_{\mathbf{K}}^{\top}$, then compute

\begin{center}
$\mathcal{D}^{\top} = (\mathbb{S}_{\mathbf{K}}^{\top})^{\top} = ((\mathbf{K}^{\top})^{\top})^{\top} = \mathbf{K}^{\top} = \mathbb{S}_{\mathbf{K}}$.
\end{center}

By \Cref{coorthogonality1.3}, there is a set of objects $\mathcal{X}$ such that $\mathcal{X}^{\top} = \mathbb{S}_{\mathbf{K}}$, so that the desired right Bousfield localization exists by \Cref{weakrightbousfield}.
\end{proof}

Under the assumption of $\vp$, cellularizations are not different from general right Bousfield localizations, or colocalizations, as the following statement should clarify.

\begin{cor} \label{rightbousfieldvopenka}
Assuming $\vp$, then the right Bousfield localization at any class of morphisms $\mathcal{S}$ in a combinatorial right semi-model category $\mathbf{M}$ exists and it is combinatorial. Moreover, if $\mathbf{M}$ is simplicial, then so is the localized weak model structure.
\end{cor}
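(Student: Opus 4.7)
The plan is to mirror the proof of \Cref{leftbousfieldvopenka}, with \Cref{cellularizationvopenka} playing the role of \Cref{hirschhornleft}. The crux is the tautology that a right Bousfield localization at a class of morphisms is, in a precise sense, the same thing as a cellularization at the associated class of colocal objects, so the work is essentially a bookkeeping exercise in the formal properties of the Galois connection $(-)^{\top}$.

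More explicitly, given $\mathcal{S}$ I would set $\mathbf{D} := \mathcal{S}^{\top}$, the class of objects homotopy co-orthogonal to $\mathcal{S}$, and argue that $R_{\mathcal{S}} \mathbf{M}$ and the cellularization of $\mathbf{M}$ at $\mathbf{D}$ coincide as model structures. Both live on the underlying category of $\mathbf{M}$ and retain the same fibrations; their weak equivalences are, respectively, $(\mathcal{S}^{\top})^{\top} = \mathbf{D}^{\top}$ and $((\mathbf{D}^{\top})^{\top})^{\top} = \mathbf{D}^{\top \top \top}$, and the standard Galois-connection identity $\mathbf{D}^{\top \top \top} = \mathbf{D}^{\top}$ closes the identification. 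Since the cellularization at $\mathbf{D}$ exists and is combinatorial under $\vp$ by \Cref{cellularizationvopenka}, the existence and combinatoriality of $R_{\mathcal{S}} \mathbf{M}$ follow immediately, exhibiting it also as a right semi-model structure.

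For the simplicial refinement, the same reduction lets me appeal to the simplicial clause at the set level: \Cref{coorthogonality1.3}, which is the set-generation step underlying \Cref{cellularizationvopenka}, produces a small set $\mathcal{X}$ of objects with $\mathcal{X}^{\top} = \mathbf{D}^{\top}$, so that the cellularization in question is delivered by \Cref{weakrightbousfield} applied to $\mathcal{X}$. The main obstacle in writing the argument up carefully is verifying that the set-level right Bousfield localization coming from \Cref{weakrightbousfield} really does preserve the simplicial enrichment of $\mathbf{M}$: this is the natural analog of the simplicial clause stated in \Cref{hirschhornleft} and is implicit in Barwick's construction, but it is not spelled out in the statement as cited, and so needs to be verified by a direct inspection of the generating (trivial) cofibrations produced there. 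Once that compatibility is in hand, the simplicial part of the corollary is automatic.
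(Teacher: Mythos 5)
Your proposal reproduces the paper's argument: set $\mathbf{D} := \mathcal{S}^{\top}$, invoke \Cref{cellularizationvopenka} for existence and combinatoriality, and use the Galois identity $\mathbf{D}^{\top\top\top} = \mathbf{D}^{\top}$ to identify $R_{\mathcal{S}}\mathbf{M}$ with the cellularization at $\mathbf{D}$. Your caution about the simplicial clause is warranted: as stated, \Cref{weakrightbousfield} and \Cref{cellularizationvopenka} carry no simplicial assertion, and the paper's own proof of this corollary likewise does not address simpliciality, leaving that part implicit in the underlying Barwick references.
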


\begin{proof}
Let $\mathbf{K} = \mathcal{S}^{\top}$. By \Cref{cellularizationvopenka}, the cellularization of $\mathbf{M}$ at $\mathbf{K}$ exists and it is combinatorial. We also know that it is computed as the right Bousfield localization at $\mathbb{S}_{\mathbf{K}} = \mathbf{K}^{\top}$. It remains to show that it corresponds with the right Bousfield localization at $\mathcal{S}$. Let us compute

\begin{center}
$\mathbb{S}_{\mathbf{K}}^{\top} = (\mathbf{K}^{\top})^{\top} = ((\mathcal{S}^{\top})^{\top})^{\top} = \mathcal{S}^{\top}$.
\end{center}

In particular, $\mathbb{S}_{\mathbf{K}}$-colocal objects correspond to $\mathcal{S}$-colocal objects, so that in turn $\mathbb{S}_{\mathbf{K}}$-colocal equivalences correspond to $\mathcal{S}$-colocal equivalences, which concludes the proof.
\end{proof}

\begin{prop} \label{infcolocalization}
Assume $\vp$, and let $\mathcal{C}$ be a presentable $\infty$-category. Then for every class of maps $S$ in $\mathcal{C}$, the colocalization of $\mathcal{C}$ at $S$ is a coreflective subcategory, and it is itself presentable.
\end{prop}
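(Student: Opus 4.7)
The plan is to mirror the strategy of \Cref{reflectivethensmallorth}, trading left Bousfield localizations for their right-handed counterparts developed in this section. By \cite{HTT}, Proposition A.3.7.6 we present $\mathcal{C} \simeq \N_\Delta(\mathbf{M}^\circ)$ for a suitable combinatorial simplicial model category $\mathbf{M}$, and we realize $S$ as a class $\mathbb{S}$ of morphisms between bifibrant objects in $\mathbf{M}$. Since every model category is in particular a right semi-model category, \Cref{rightbousfieldvopenka} (which is where $\vp$ enters) applies and produces the right Bousfield localization $R_\mathbb{S}\mathbf{M}$ as a combinatorial simplicial right semi-model category on the same underlying category, with the same fibrations as $\mathbf{M}$.

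Next, the identity on underlying categories yields a simplicial adjunction $R_\mathbb{S}\mathbf{M} \rightleftarrows \mathbf{M}$ in which $\id: \mathbf{M} \to R_\mathbb{S}\mathbf{M}$ is right Quillen: fibrations are preserved by construction, and old trivial fibrations remain trivial fibrations in the localization because $R_\mathbb{S}\mathbf{M}$ has strictly more weak equivalences. Pulling this through the simplicial nerve via the natural analog of \cite{HTT}, Proposition 5.2.4.6 for right semi-model categories, one obtains an adjunction of $\infty$-categories

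\begin{center}
$\N_\Delta((R_\mathbb{S}\mathbf{M})^\circ) \rightleftarrows \N_\Delta(\mathbf{M}^\circ) \simeq \mathcal{C}$
\end{center}

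whose left adjoint heads into $\mathcal{C}$. Because the simplicial enrichment of $R_\mathbb{S}\mathbf{M}$ is unchanged, an inspection of mapping spaces shows this left adjoint is fully faithful. Moreover, by \Cref{weakrightbousfield}, a bifibrant object of $\mathbf{M}$ is cofibrant in $R_\mathbb{S}\mathbf{M}$ precisely when it is $\mathbb{S}$-colocal; combining this with the fact that fibrations agree, one identifies $(R_\mathbb{S}\mathbf{M})^\circ$ with the bifibrant $S$-colocal objects of $\mathbf{M}$, and hence the essential image of the left adjoint with the colocalization of $\mathcal{C}$ at $S$. This exhibits the latter as a coreflective subcategory.

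Presentability of the colocalization then reduces to presentability of $\N_\Delta((R_\mathbb{S}\mathbf{M})^\circ)$, which should follow from the combinatoriality of $R_\mathbb{S}\mathbf{M}$. The main obstacle will be that \cite{HTT}, Proposition A.3.7.6 is only formulated for genuine combinatorial simplicial model categories, and the same holds for \cite{HTT}, Proposition 5.2.4.6 invoked above; both need to be upgraded to the right semi-model setting. This can be verified directly along the lines of Lurie's arguments, or imported from Barwick's treatment of combinatorial semi-model categories. Granted such an extension, the plan above goes through unchanged and yields both coreflectivity and presentability of $S^{-1}\mathcal{C}$.
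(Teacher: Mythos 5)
Your outline tracks the paper's proof closely: both present $\mathcal{C}$ via \cite{HTT}, A.3.7.6, invoke \Cref{rightbousfieldvopenka} to produce $R_{\mathbb{S}}\mathbf{M}$ as a combinatorial right semi-model category, pass the identity Quillen adjunction to underlying $\infty$-categories, check that the left adjoint is fully faithful, and identify $\mathbb{S}$-colocal objects with $S$-colocal ones. You also correctly put your finger on the real difficulty, namely that $R_{\mathbb{S}}\mathbf{M}$ is only a semi-model category, so neither \cite{HTT}, 5.2.4.6 nor A.3.7.6 applies to it as stated.

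Where you diverge from the paper is in how you propose to close that gap. You suggest extending Lurie's machinery for simplicial model categories — taking $\N_{\Delta}((R_{\mathbb{S}}\mathbf{M})^{\circ})$ and a semi-model analogue of 5.2.4.6 — to the semi-model setting. The paper instead avoids that route entirely: it works with the general (Dwyer--Kan style) underlying $\infty$-category $(-)^{\infty}$ of a relative category, and proves exactly the two transfer results it needs in Appendix A: \Cref{adjunctionweakmodel} (a Quillen adjunction between weak model categories induces an $\infty$-adjunction, proven directly from the homotopy triangle identities) and \Cref{combpres} (the underlying $\infty$-category of a combinatorial weak model category is presentable, proven by a chain of Quillen equivalences to a combinatorial simplicial Quillen model category and only then invoking A.3.7.6). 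This is a leaner package than an all-out generalization of 5.2.4.6 and A.3.7.6, and it sidesteps the question of whether $\N_{\Delta}((-)^{\circ})$ is even the right model for the underlying $\infty$-category of a semi-model category. So your plan is sound as a plan, but the hard work you defer is precisely what the paper's appendix supplies, and it supplies it in a somewhat different — and probably easier — form than the one you sketch.

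One small inaccuracy worth flagging: you justify that $\id: \mathbf{M} \to R_{\mathbb{S}}\mathbf{M}$ is right Quillen by observing that it preserves fibrations and trivial fibrations. But as \Cref{beforedklocalizations} warns, for right semi-model categories this condition is \emph{not} equivalent to the defining one, which is that the left adjoint $\id: R_{\mathbb{S}}\mathbf{M} \to \mathbf{M}$ preserve cofibrations and trivial cofibrations. The latter does hold here (cofibrations of the localization form a subclass of those of $\mathbf{M}$, and trivial cofibrations are unchanged since fibrations are), so the conclusion stands, but the verification should be run on that side of the adjunction.
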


\begin{rem}
The following proof requires $\vp$ even when $S$ is a set.
\end{rem}

\begin{proof}
By \cite{HTT}, Proposition A.3.7.6, there is a combinatorial model category $\mathbf{M}$ such that $\N_{\Delta}(\mathbf{M}^{\circ}) \simeq \mathcal{C}$. Let $\mathbb{S}$ be the class of morphisms in $\mathbf{M}^{\circ}$ that give rise to morphisms in $S$. By \Cref{rightbousfieldvopenka}, the right Bousfield localization $R_{\mathbb{S}} \mathbf{M}$ exists and it is combinatorial. Moreover, since there is a Quillen adjunction

\begin{center}
$\mathbf{M} \leftrightarrows R_{\mathbb{S}} \mathbf{M}$
\end{center}

where the two functors are both identities on the underlying categories, using \Cref{adjunctionweakmodel} we obtain an adjunction of the underlying $\infty$-categories

\begin{center}
$\mathcal{C} \leftrightarrows (R_{\mathbb{S}} \mathbf{M})^{\infty}$
\end{center}

where the left adjoint is easily seen to be fully faithful after an inspection of mapping spaces. \Cref{combpres} says that $(R_{\mathbb{S}} \mathbf{M})^{\infty}$ is presentable. It only remains to show that it is equivalent to the colocalization at $S$.\\
For this, we are reduced to prove that an object of $\mathbf{M}$ is $\mathbb{S}$-colocal if and only if it is $S$-colocal as an object of $\mathcal{C}$. Since homotopy co-orthogonality is detected on underlying $\infty$-categories, this is true by definition.
\end{proof}

\section{Open questions}
To conclude this treatment, we briefly list some of the questions that are strictly related to what we have been discussing but that have to this point remained unanswered.

\begin{enumerate}
\item Is $\vp$ equivalent to $\ivp$? As pointed out in \Cref{vpandivp}, there is reason to believe that this is the case, but as of now we still lack a mathematical proof. The interest of this lies in the fact that, if true, it would add one more item to the list of manifestations of akinness between local presentability in the 1-dimensional world and in the $\infty$-dimensional one, constituting one more instance of the former being  enough to also cover at least a significant part of the theory of the latter.

\item[(1)$'$] Is $\ivp$ equivalent to a formulation given in terms of rigid classes in presentable $\infty$-categories? In other words, is there an $\infty$-categorical version of \Cref{vopenka}$(1) \Leftrightarrow (2)$? Of course, since we have the mentioned proposition and \Cref{altvopenkaivopenka}, an affirmative answer to this is equivalent to an affirmative answer to question (1). In the proof of \Cref{vopenka}, a central role is played by the existence of a full embedding $\mathcal{K} \hookrightarrow \catname{Gra}$ of a presentable category into the category of graphs, which in particular lands in the subcategory of graphs for which the map $(s,t): E \to V \times V$ is monic. This latter property is used to rest assured that there is a bounded amount of such graphs with a fixed set of vertices. The existence of such embeddings for $\infty$-categories is proven in \Cref{univembedding}, but unfortunately we don't know that the monic property is verified. If this were true, it would allow an adaptation of \Cref{vopenka} to $\infty$-categories, and hence an affirmative answer to (1) and (1)$'$.

\item Is there any compact way to characterize co-orthogonality classes in presentable ($\infty$-)categories that can be expressed as small co-orthogonality classes under $\vp$? The main reason for asking this is that many of the attempted and failed strategies aimed to find such a characterization fell rather short of proving that \emph{all} co-orthogonality classes are small, with only relatively minor details here and there rowing against it, which makes it a natural question to ask if there are any minimal conditions that would make such a proof actually work.

\item Can the implication $(1) \Rightarrow (3)$ of \Cref{colocalizationcoreflective} be generalized to $\infty$-categories? The original proof in \cite{LocPresAccCat}, Corollary 6.29 seems to defy an $\infty$-categorical adaptation, in that it depends on the special adjoint functor theorem, which in turns seems unwilling to be generalized, due to the much harder recognition of monomorphisms in $\infty$-categories as compared to 1-categories.

\item Weak Vop\v{e}nka's principle can be made considerably looser by replacing a fully faithful functor $\Ord^{op} \to \mathcal{K}$ with an assignment $\Ord^{op} \to \mathcal{K}$ which is surjective but not necessarily injective on morphisms, and not even necessarily functorial. This is precisely the statement of $\swvp$. Is there a similar rephrasing of Vop\v{e}nka's principle? Its existence would have very nice consequences, such as almost immediately positive answers to questions (1) and (3), the latter because it would give a way to obviate the need of SAFT.

\item As already mentioned in \Cref{smalllimclosureremark}, we know without set-theoretical assumptions that the limit closure of a small full subcategory of a presentable category is reflective. It remains open whether this is also true for $\infty$-categories, since our only proof so far relies on $\wvp$.
\end{enumerate}

\appendix
\section{Weak model and semi-model categories}
This appendix is devoted to working out some results that are needed towards the end of \Cref{sectioncellularizations}. I would like to thank Simon Henry who pointed me to his own articles and clarified several subtle differences between Quillen model categories and weak model categories.\\
The full theory of weak model categories is not relevant to our scopes, but a treatment is given in \cite{Weakmodel}, and further in \cite{Combinatorialweakmodel} for the accessible and combinatorial case. The idea of a weak model category is requiring the same axioms as for a Quillen model category, except that only maps with cofibrant domain and fibrant codomain are required to factor, and fibrations (cofibrations) are only required to lift against acyclic cofibrations between cofibrant objects (acyclic fibrations between fibrant objects). In particular, there are in general no replacements for all objects, but all fibrant objects have a cofibrant replacement and all cofibrant objects have a fibrant replacement, which can be chosen to be functorial in the combinatorial setting.\\
In practice, we will actually need the stronger notion of semi-model category, coming in the two dual forms of left and right semi-model categories and sitting in between Quillen and weak model categories. A treatment is given in \cite{Barwickleftright}, which we will refer to for some results that we need. For the sake of easier track-keeping, we will spell out the main definitions concerning right semi-model category.

\begin{defn}
A right semi-model category consists of a (finitely, depending on taste) complete and cocomplete category $\mathcal{C}$ and three classes of morphisms, called weak equivalences, fibrations and cofibrations, such that:

\begin{enumerate}
\item All three classes are closed under composition and retracts;
\item the class of weak equivalences satisfies the 2-out-of-3 property;
\item the final object is fibrant;
\item the classes of cofibrations and trivial fibrations with fibrant codomain form a weak factorization system;
\item the classes of trivial cofibrations and fibrations form a weak factorization system.
\end{enumerate}
\end{defn}

\begin{defn}
An adjunction between right semi-model categories is called a Quillen adjunction if the left adjoint preserves cofibrations and trivial cofibrations.
\end{defn}

\begin{defn}
A right semi-model category is called combinatorial if its underlying category is presentable and there are two small sets of morphisms $I$ and $J$ such that:

\begin{itemize}
\item a morphism with fibrant codomain is a trivial fibration if and only if it has the right lifting property against $I$;
\item a morphism is a fibration if and only if it satisfies the right lifting property against $J$.
\end{itemize}
\end{defn}

\begin{rem} \label{beforedklocalizations}
Unlike for Quillen model categories, the definition of Quillen adjunction is not equivalent to the right adjoint preserving fibrations and trivial fibrations. Dually, a Quillen adjunction between \emph{left} semi-model categories has to be defined as the right adjoint preserving those two classes.\\
Furthermore, the definition of combinatorial includes a version of cofibrant generation. In general, if we are dealing with a Quillen model category, its cofibrant generation as a right semi-model category does \emph{not} imply cofibrant generation as a model category.
\end{rem}

In the following, we will freely refer to the underlying $\infty$-categories of relative categories. Depending on the specific classes of relative categories and flavors of model categories that one is handling, there may be various equivalent ways of constructing such $\infty$-categories. However, our discussion is not bound to any specific one of them, so we will only use the general properties that all of them must satisfy. The most general way of constructing the underlying $\infty$-category is the following: given a relative category $(\mathcal{C}, \mathcal{W})$, take the pushout

\begin{center}
\begin{tikzcd}
\N (\mathcal{W}) \ar[d] \ar[r] & \N (\mathcal{C}) \ar[d]\\
\textup{Ex}^{\infty} \N (\mathcal{W}) \ar[r] & \textup{Loc}'(\mathcal{C},\mathcal{W})
\end{tikzcd}
\end{center}

in $\sSet$, then $\textup{Loc}(\mathcal{C},\mathcal{W})$ will be a Joyal-fibrant replacement of $\textup{Loc}' (\mathcal{C},\mathcal{W})$.\\
It will be useful to know that, whenever $\underline{\mathbf{M}}$ is a simplicial model category, with $\mathcal{W}$ the class of weak equivalences, and $\mathbf{M}$ is its underlying ordinary category, then there is an equivalence of $\infty$-categories

\begin{center}
$\N_{\Delta}(\underline{\mathbf{M}}^{bif}) \simeq \textup{Loc}(\mathbf{M},\mathcal{W})$
\end{center}

which provides another model of the underlying $\infty$-category for this specific class of categories. This result was first formulated in \cite{SimpLoc3}, Proposition 4.8. An alternative approach is taken in \cite{SimpLocrevisited}, Corollary 1.4.3, and yet another proof, taking place directly in $\sSet$ instead of going through $\sCat$, appears in \cite{SimpLocCisinski}.\\
When not referring to a specific model for the underlying $\infty$-category of a relative, or model category $\mathcal{C}$, we will simply denote it by $\mathcal{C}^{\infty}$. Moreover, a functor $F$ between such categories induces a functor $F^{\infty}$ between underlying $\infty$-categories.\\

Before stating the main results of the section, we need some preparation.

\begin{lem} \label{adj1}
Given two morphisms $u: X \to Y$ and $u': X' \to Y'$ in an $\infty$-category $\mathcal{C}$, then there is a homotopy commutative square

\begin{center}
\begin{tikzcd}
\mathcal{C}^{\Delta^1}(u,u') \ar[r] \ar[d] & \mathcal{C}(Y,Y') \ar[d, "u^{\ast}"]\\
\mathcal{C}(X,X') \ar[r, "u'_{\ast}", swap] & \mathcal{C}(X,Y')
\end{tikzcd}
\end{center}

where the unmarked maps are induced by the two projections $\mathcal{C}^{\Delta^1} \to \mathcal{C}$.
\end{lem}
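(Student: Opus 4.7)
The plan is to reinterpret $\mathcal{C}^{\Delta^1}$ as the arrow $\infty$-category $\Fun(\Delta^1, \mathcal{C})$, so that $\mathcal{C}^{\Delta^1}(u, u')$ becomes $\Map_{\Fun(\Delta^1, \mathcal{C})}(u, u')$, and then to exhibit this mapping space as a homotopy pullback whose existence forces the commutativity of the square in the statement.

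First, the endpoint evaluations $(\textup{ev}_0, \textup{ev}_1)\colon \Fun(\Delta^1, \mathcal{C}) \to \mathcal{C} \times \mathcal{C}$ induce on mapping spaces a canonical map
\[
\Map_{\Fun(\Delta^1, \mathcal{C})}(u, u') \to \mathcal{C}(X, X') \times \mathcal{C}(Y, Y'),
\]
whose two components are precisely the two unmarked arrows in the statement. The core step is to identify the domain with the homotopy pullback $\mathcal{C}(X, X') \times^h_{\mathcal{C}(X, Y')} \mathcal{C}(Y, Y')$ taken along the postcomposition $u'_*$ and the precomposition $u^*$. This is the mapping-space analogue of the end formula for functor $\infty$-categories, $\Map_{\Fun(K, \mathcal{C})}(F, G) \simeq \int_{K} \mathcal{C}(F(-), G(-))$, specialized to $K = \Delta^1$; in our concrete case, the formula is also transparent from a quasi-categorical perspective, since a vertex of $\Map_{\Fun(\Delta^1, \mathcal{C})}(u, u')$ is by definition a homotopy-commutative square with $u$ and $u'$ as vertical edges, i.e.\ a pair $(f, g) \in \mathcal{C}(X, X') \times \mathcal{C}(Y, Y')$ together with a homotopy $u' \circ f \simeq g \circ u$ in $\mathcal{C}(X, Y')$.

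Once this identification is in hand, the square in the statement is simply the outer square of the resulting pullback diagram, and its homotopy commutativity is built in, witnessed by the canonical homotopy coming from the pullback cone. The main obstacle is the technical verification of the homotopy pullback formula itself, which can be cleanly dispatched either by invoking the end description of functor-category mapping spaces, or by direct manipulation with Joyal's slice and join constructions in the chosen quasi-categorical model of $\mathcal{C}$; once this identification is secured, the claim is entirely formal.
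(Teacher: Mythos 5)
Your approach is correct but genuinely different from the paper's. You reduce the claim to the identification of $\Map_{\Fun(\Delta^1,\mathcal{C})}(u,u')$ with the homotopy pullback $\mathcal{C}(X,X') \times^h_{\mathcal{C}(X,Y')} \mathcal{C}(Y,Y')$, which is the specialization to $K=\Delta^1$ of the end formula for mapping spaces in $\Fun(K,\mathcal{C})$; the square in the statement then comes for free, together with the stronger fact that it is a homotopy pullback. The paper instead proves the weaker statement (commutativity only) by a hands-on diagram chase: it assembles a commutative diagram of functor $\infty$-categories $\mathcal{C}^{\Delta^1\times\Delta^1}$, $\mathcal{C}^{\Delta^2}$, $\mathcal{C}^{\Delta^1}$ over a corresponding diagram of products, takes fibers over the vertex $(u,u')$ using HTT Corollary 4.2.1.8 to identify the resulting mapping spaces, and uses HTT Proposition 2.1.2.5 to show the two outer horizontal maps are trivial Kan fibrations, so that the central square is the one required. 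Your route is conceptually crisper and buys more (the pullback structure), but leans on the end formula as a black box or defers to a "direct manipulation with slice and join constructions" that, if carried out, would essentially reproduce the paper's fiber chase. The paper's route is longer and proves less, but is self-contained modulo two explicitly cited results from HTT and stays closer to the quasi-categorical primitives. Either is an acceptable proof; if you wish to use the end formula you should give a precise reference for it (it does not appear in HTT as stated), since that identification is doing all the work in your argument.
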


\begin{proof}
Let us consider the two commutative diagrams, living one on top of the other



\begin{center}
\begin{tikzcd}
&& \mathcal{C}^{\Delta^1 \times \Delta^1} \ar[dll] \ar[dl] \ar[dr] \ar[drr]\\
\mathcal{C}^{0 \times \Delta^1} & \mathcal{C}^{\Delta^2} \ar[l, "01"] \ar[dr, "02", swap] && \mathcal{C}^{\Delta^2} \ar[r, "12", swap] \ar[dl, "02"] & \mathcal{C}^{1 \times \Delta^1}\\
&& \mathcal{C}^{\Delta^1} \ar[d, Rightarrow, shorten <= 5pt, shorten >= 5pt]\\
&& \mathcal{C}^{\Delta^1} \times \mathcal{C}^{\Delta^1} \ar[dll, "p_0 \times p_0", swap] \ar[dl, "p_0 \times \id"] \ar[dr, "\id \times p_1", swap] \ar[drr, "p_1 \times p_1"]\\
\mathcal{C} \times \mathcal{C} & \mathcal{C} \times \mathcal{C}^{\Delta^1} \ar[l, "\id \times p_0"] \ar[dr, "\id \times p_1", swap] && \mathcal{C}^{\Delta^1} \times \mathcal{C} \ar[r, "p_1 \times \id", swap] \ar[dl, "p_0 \times \id"] & \mathcal{C} \times \mathcal{C}\\
&& \mathcal{C} \times \mathcal{C}
\end{tikzcd}
\end{center}

where: the two maps $\mathcal{C}^{\Delta^1 \times \Delta^1} \to \mathcal{C}^{\Delta^2}$ restrict to the upper and lower triangle respectively, so that the composite $\mathcal{C}^{\Delta^1 \times \Delta^1} \to \mathcal{C}^{\Delta^1}$ selects the diagonal; the map $\mathcal{C}^{\Delta^1 \times \Delta^1} \to \mathcal{C}^{\Delta^1} \times \mathcal{C}^{\Delta^1}$ is induced by the inclusion $\Delta^1 \times 0 \cup \Delta^1 \times 1 \hookrightarrow \Delta^1 \times \Delta^1$; the map $\mathcal{C}^{\Delta^2} \to \mathcal{C} \times \mathcal{C}^{\Delta^1}$ is induced by the inclusion $0 \times 0 \cup 1 \times \Delta^1 \hookrightarrow \Delta^2$; the map $\mathcal{C}^{\Delta^2} \to \mathcal{C}^{\Delta^1} \times \mathcal{C}$ is induced by the inclusion $\Delta^1 \times 0 \cup 1 \times 1 \hookrightarrow \Delta^2$; all three maps $\mathcal{C}^{\Delta^1} \to \mathcal{C} \times \mathcal{C}$ are induced by the inclusion $0 \cup 1 \hookrightarrow \Delta^1$.\\
Now consider the point $(u,u') \in \mathcal{C}^{\Delta^1} \times \mathcal{C}^{\Delta^1}$, and observe that it is transported to the following points in the base diagram

\begin{center}
\begin{tikzcd}
&& (u,u') \ar[dll, mapsto] \ar[dl, mapsto] \ar[dr, mapsto] \ar[drr, mapsto]\\
(X,X') & (X,u') \ar[l, mapsto] \ar[dr, mapsto] && (u,Y') \ar[r, mapsto] \ar[dl, mapsto] & (Y,Y')\\
&& (X,Y').
\end{tikzcd}
\end{center}

Therefore, we have an induced diagram of fibers, which, by a repeated use of \cite{HTT}, Corollary 4.2.1.8, looks like this

\begin{center}
\begin{tikzcd}
&& \mathcal{C}^{\Delta^1}(u,u') \ar[dll] \ar[dl] \ar[dr] \ar[drr]\\
\mathcal{C}(X,X') & \mathcal{C}_{/u'} \times_{\mathcal{C}} \{ X \} \ar[l] \ar[dr] && \mathcal{C}_{u/} \times_{\mathcal{C}} \{ Y' \} \ar[dl] \ar[r] & \mathcal{C}(Y,Y')\\
&& \mathcal{C}(X,Y').
\end{tikzcd}
\end{center}

Now, by \cite{HTT}, Proposition 2.1.2.5, the projection $\mathcal{C}_{u/} \to \mathcal{C}_{Y/}$ is a trivial Kan fibration, therefore its fiber on the point $\{ Y' \}$, which is precisely the right horizontal map in the diagram, is also a trivial Kan fibration, and in particular an equivalence. Dually, the left horizontal map is an equivalence. The central square represents thus the square in the statement, and the proof is complete.
\end{proof}

\begin{lem} \label{adj1bis}
Let $f: X' \to X$ and $g: Y \to Y'$ be two morphisms in an $\infty$-category $\mathcal{C}$. Then there is a homotopy commutative square

\begin{center}
\begin{tikzcd}
\mathcal{C}(X,Y) \ar[d, "f^{\ast}"] \ar[r, "g_{\ast}"] & \mathcal{C}(X,Y') \ar[d, "f^{\ast}"]\\
\mathcal{C}(X',Y) \ar[r, "g_{\ast}"] & \mathcal{C}(X',Y').
\end{tikzcd}
\end{center}
\end{lem}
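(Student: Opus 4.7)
The plan is to reduce the statement to \Cref{adj1} by constructing a natural morphism $\sigma \colon \mathcal{C}(X,Y) \to \mathcal{C}^{\Delta^1}(f,g)$ that, intuitively, sends each $\phi \colon X \to Y$ to the commutative square

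\begin{center}
\begin{tikzcd}
X' \ar[r, "\phi \circ f"] \ar[d, "f"'] & Y \ar[d, "g"] \\
X \ar[r, "g \circ \phi"'] & Y'
\end{tikzcd}
\end{center}

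with diagonal $g \circ \phi \circ f$. Once $\sigma$ is in place, the square in the statement follows from \Cref{adj1} applied to $u = f$ and $u' = g$: the top projection $\mathcal{C}^{\Delta^1}(f,g) \to \mathcal{C}(X,Y')$ and the left projection $\mathcal{C}^{\Delta^1}(f,g) \to \mathcal{C}(X',Y)$ of that lemma, precomposed with $\sigma$, compute to $g_*$ and $f^*$ respectively by the explicit description of $\sigma(\phi)$. Hence precomposing the homotopy commutative square of \Cref{adj1} with $\sigma$ recovers exactly the claimed square.

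The construction of $\sigma$ proceeds as follows. I would exploit the standard identification $\Delta^1 \times \Delta^1 \cong \Delta^2 \cup_{\Delta^{\{0,2\}}} \Delta^2$, obtained by gluing the lower-left and upper-right 2-simplices along the diagonal, together with the well-known fact that the spine inclusion $\mathrm{Sp}(3) = \Delta^{\{0,1\}} \cup \Delta^{\{1,2\}} \cup \Delta^{\{2,3\}} \hookrightarrow \Delta^3$ is inner anodyne. For each $\phi \in \mathcal{C}(X,Y)$, the triple $(f,\phi,g)$ specifies a 3-spine $X' \xrightarrow{f} X \xrightarrow{\phi} Y \xrightarrow{g} Y'$ in $\mathcal{C}$, and as $\phi$ varies this organizes into a morphism $\mathcal{C}(X,Y) \to \mathcal{C}^{\mathrm{Sp}(3)}$ landing in the fiber with the outer vertices and outer edges fixed. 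Since $\mathcal{C}^{\Delta^3} \to \mathcal{C}^{\mathrm{Sp}(3)}$ is a trivial Kan fibration, this map lifts to a map $\mathcal{C}(X,Y) \to \mathcal{C}^{\Delta^3}$ by the right lifting property against $\varnothing \hookrightarrow \mathcal{C}(X,Y)$. The resulting 3-simplex has $d_1$-face realizing $X' \xrightarrow{\phi f} Y \xrightarrow{g} Y'$ and $d_2$-face realizing $X' \xrightarrow{f} X \xrightarrow{g\phi} Y'$, both with last edge $g \circ \phi \circ f$. By the product decomposition above, these two 2-simplices glue into a map $\Delta^1 \times \Delta^1 \to \mathcal{C}$, i.e., the desired square, which manifestly lies in $\mathcal{C}^{\Delta^1}(f,g)$.

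The main obstacle is ensuring that the lift is constructed naturally in $\phi$, but this is automatic from the universal lifting property of trivial fibrations against arbitrary monomorphisms of simplicial sets; beyond that the argument is essentially formal and mirrors the strategy of \Cref{adj1}.
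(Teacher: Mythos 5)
Your proposal is correct in substance, and the underlying simplicial geometry is the same as in the paper: both arguments rest on producing a $3$-simplex with spine $X' \xrightarrow{f} X \xrightarrow{\phi} Y \xrightarrow{g} Y'$ and reading off the square from its two inner faces glued along $\Delta^{\{0,3\}}$. The organizational difference is worth noting. The paper does \emph{not} invoke \Cref{adj1} as a black box; instead it reruns the entire argument of \Cref{adj1} with $\mathcal{C}^{\Delta^3}$ in place of $\mathcal{C}^{\Delta^1 \times \Delta^1}$ at the apex, using the restriction along $01 \cup 23 \colon \Delta^1 \sqcup \Delta^1 \hookrightarrow \Delta^3$ over $\mathcal{C}^{\Delta^1} \times \mathcal{C}^{\Delta^1}$, and only at the end identifies the fiber $\mathcal{C}^{\Delta^3} \times_{\mathcal{C}^{\Delta^1} \times \mathcal{C}^{\Delta^1}} \{(f,g)\}$ with $\mathcal{C}(X,Y)$ via the $02$-projection and the trivial Kan fibrations of \cite[Proposition 2.1.2.5]{HTT}. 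You instead keep \Cref{adj1} intact (applied with $u=f$, $u'=g$) and build an explicit comparison map $\sigma \colon \mathcal{C}(X,Y) \to \mathcal{C}^{\Delta^1}(f,g)$. Both routes are valid; yours makes the lifting more concrete, while the paper's avoids having to verify the composites with $\sigma$ separately, since the relevant maps appear directly as arrows of the fiber diagram.

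The one place where your writeup is thinner than it should be is the claim that $\sigma$ followed by the two outer projections of \Cref{adj1} computes to $g_*$ and $f^*$ ``by the explicit description of $\sigma(\phi)$.'' That phrasing suggests a pointwise check, but what has to be produced is a homotopy between two maps of Kan complexes. The honest justification is a short but real argument: for instance, the composite $\mathcal{C}(X,Y) \xrightarrow{\sigma} \mathcal{C}^{\Delta^1}(f,g) \to \mathcal{C}(X,Y')$ factors as ``restrict the lifted $3$-simplex to the face $\Delta^{\{1,2,3\}}$, then to the edge $\Delta^{\{1,3\}}$,'' and this exhibits it as a section of the trivial Kan fibration given by restriction along $\Lambda^2_1 \hookrightarrow \Delta^2$ (with the edge $\{2,3\}$ pinned to $g$) followed by restriction to $\Delta^{\{1,3\}}$. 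Since $g_*$ is, by construction, such a composite for some choice of section, and any two sections of a trivial Kan fibration are homotopic, the two maps agree up to homotopy; similarly for $f^*$. With that paragraph supplied, the argument is complete and is, I think, a perfectly reasonable alternative to the paper's proof.
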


\begin{proof}
Consider the following commutative diagram

\begin{center}
\begin{tikzcd}
&& \mathcal{C}^{\Delta^3} \ar[dll, "02", swap] \ar[dl, "023"] \ar[dr, "013", swap] \ar[drr, "13"]\\
\mathcal{C}^{\Delta^1} & \mathcal{C}^{\Delta^2} \ar[l, "01"] \ar[dr, "02", swap] && \mathcal{C}^{\Delta^2} \ar[r, "12", swap] \ar[dl, "02"] & \mathcal{C}^{\Delta^1}\\
&& \mathcal{C}^{\Delta^1}
\end{tikzcd}
\end{center}

so that the vertical composite $\mathcal{C}^{\Delta^3} \to \mathcal{C}^{\Delta^1}$ is $03$. Moreover, this diagram lives over a similarly shaped diagram that looks like the one in the proof of \Cref{adj1}, where the map $\mathcal{C}^{\Delta^3} \to \mathcal{C}^{\Delta^1} \times \mathcal{C}^{\Delta^1}$ is induced by the inclusion $01 \cup 23: \Delta^1 \cup \Delta^1 \hookrightarrow \Delta^3$ and all other maps are the same. Analogously as in the proof of \Cref{adj1}, chasing the point $(f,g) \in \mathcal{C}^{\Delta^1} \times \mathcal{C}^{\Delta^1}$ gives a homotopy commutative square

\begin{center}
\begin{tikzcd}
\mathcal{C}^{\Delta^3} \times_{\mathcal{C}^{\Delta^1} \times \mathcal{C}^{\Delta^1}} \{ (f,g) \} \ar[r] \ar[d] & \mathcal{C}(X,Y') \ar[d]\\
\mathcal{C}(X',Y) \ar[r] & \mathcal{C}(X',Y').
\end{tikzcd}
\end{center}

We conclude by observing that, once more by a similar argument as in the final passage in the proof of \Cref{adj1}, there is an equivalence $\mathcal{C}^{\Delta^3} \times_{\mathcal{C}^{\Delta^1} \times \mathcal{C}^{\Delta^1}} \{ (f,g) \} \simeq \mathcal{C}(X,Y)$ which is compatible with the desired square.
\end{proof}

\begin{lem} \label{adj2}
Given a natural transformation $\alpha: f \Rightarrow g: \mathcal{C} \to \mathcal{D}$ between functors of $\infty$-categories, then for every objects $X,Y \in \mathcal{C}$ there is a homotopy commutative square

\begin{center}
\begin{tikzcd}
\mathcal{C}(X,Y) \ar[r, "f"] \ar[d, "g"] & \mathcal{D}(fX,fY) \ar[d, "\alpha_{Y \ast}"]\\
\mathcal{C}(gX,gY) \ar[r, "\alpha_X^{\ast}"] & \mathcal{C}(fX,gY).
\end{tikzcd}
\end{center}
\end{lem}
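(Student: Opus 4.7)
The plan is to reduce the square to \Cref{adj1} by interpreting the natural transformation $\alpha$ as a single functor into the arrow $\infty$-category $\mathcal{D}^{\Delta^1}$, with the naturality squares serving as morphisms in there. This way, a morphism $h: X \to Y$ in $\mathcal{C}$ gives rise to an arrow $\alpha_X \to \alpha_Y$ in $\mathcal{D}^{\Delta^1}$, to which \Cref{adj1} applies directly.

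Concretely, I would first form the currying $\bar{\alpha}: \mathcal{C} \to \mathcal{D}^{\Delta^1}$ of the natural transformation, which sends an object $X$ to the arrow $\alpha_X: fX \to gX$. Composing with the two evaluation projections $\mathcal{D}^{\Delta^1} \to \mathcal{D}$ induced by $\{0\}, \{1\} \hookrightarrow \Delta^1$ recovers $f$ and $g$ respectively. Taking mapping spaces then produces a map $\bar{\alpha}_{\ast}: \mathcal{C}(X,Y) \to \mathcal{D}^{\Delta^1}(\alpha_X, \alpha_Y)$ whose composites with the induced projections to $\mathcal{D}(fX, fY)$ and $\mathcal{D}(gX, gY)$ are, up to homotopy, $f$ and $g$ applied on mapping spaces. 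Next, \Cref{adj1} applied with $u = \alpha_X$ and $u' = \alpha_Y$ delivers a homotopy commutative square
\begin{center}
\begin{tikzcd}
\mathcal{D}^{\Delta^1}(\alpha_X, \alpha_Y) \ar[r] \ar[d] & \mathcal{D}(gX, gY) \ar[d, "\alpha_X^{\ast}"]\\
\mathcal{D}(fX, fY) \ar[r, "\alpha_{Y\ast}", swap] & \mathcal{D}(fX, gY)
\end{tikzcd}
\end{center}
whose unlabelled arrows are precisely those two projections. Pasting $\bar{\alpha}_{\ast}$ onto the upper-left corner yields on the outer boundary exactly the desired homotopy commutative square.

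The main obstacle I expect in a full write-up is the bookkeeping needed to identify the composites through $\bar{\alpha}_{\ast}$ with the action of $f$ and $g$ at the level of the explicit fiber realizations used in \Cref{adj1}. This amounts to repeating that lemma's diagram chase with the extra functor $\bar{\alpha}$ plugged in at the top, and verifying that the pullback squares used to identify fibers of $\mathcal{C}^{\Delta^1} \to \mathcal{C} \times \mathcal{C}$ remain compatible after this extension; it should be entirely routine, parallel to the arguments in \Cref{adj1} and \Cref{adj1bis}, but it is the one step that genuinely requires the quasi-categorical machinery rather than abstract $\infty$-categorical manipulations.
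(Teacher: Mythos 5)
Your proof is correct and follows essentially the same route as the paper: both view $\alpha$ as a functor $\mathcal{C} \to \mathcal{D}^{\Delta^1}$, apply \Cref{adj1} with $u = \alpha_X$ and $u' = \alpha_Y$, and paste the induced map $\mathcal{C}(X,Y) \to \mathcal{D}^{\Delta^1}(\alpha_X,\alpha_Y)$ onto that square. The bookkeeping you flag at the end — identifying the composites with $f$ and $g$ — is exactly what the paper dismisses as immediate from the compatibility of $\bar{\alpha}$ with the two evaluation projections, so there is no substantive gap.
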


\begin{proof}
\Cref{adj1} provides a homotopy commutative square

\begin{center}
\begin{tikzcd}
\mathcal{D}^{\Delta^1}(\alpha_X, \alpha_Y) \ar[r] \ar[d] & \mathcal{D}(fX,fY) \ar[d]\\
\mathcal{D}(gX,gY) \ar[r] & \mathcal{D}(fX,gY)
\end{tikzcd}
\end{center}

so it suffices to show that there is a map $\mathcal{C}(X,Y) \to \mathcal{D}^{\Delta^1}(\alpha_X,\alpha_Y)$ whose composition with the relevant maps in the square above gives $f$ and $g$ respectively. This is true simply because $\alpha$ can be viewed as a functor $\mathcal{C} \to \mathcal{D}^{\Delta^1}$ satisfying the desired compatibilities.
\end{proof}

\begin{lem} \label{adj3}
Given two functors $l: \mathcal{C} \leftrightarrows \mathcal{D}: r$ between $\infty$-categories, and a natural transformation $\varepsilon: lr \Rightarrow 1_{\mathcal{D}}$, there are homotopy commutative triangles

\begin{center}
\begin{tikzcd}
\mathcal{D}(lC,Y) \ar[d, "lr"] \ar[dr, bend left, "\varepsilon_{lC}^{\ast}"]\\
\mathcal{D}(lrlC,lrD) \ar[r, "\varepsilon_{D \ast}"] & \mathcal{D}(lrlC,Y)
\end{tikzcd}
\end{center}

for every two objects $C \in \mathcal{C}$ and $D \in \mathcal{D}$.
\end{lem}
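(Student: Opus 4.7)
The plan is to obtain this lemma as a direct corollary of \Cref{adj2}. Specifically, view $\varepsilon : lr \Rightarrow 1_{\mathcal{D}}$ as a natural transformation between the endofunctors $lr$ and $1_{\mathcal{D}}$ of $\mathcal{D}$, and feed it into \Cref{adj2} together with the objects $X := lC$ and $Y := D$. The output is a homotopy commutative square
\begin{center}
\begin{tikzcd}
\mathcal{D}(lC,D) \ar[r, "lr"] \ar[d, "\id"] & \mathcal{D}(lrlC,lrD) \ar[d, "\varepsilon_{D\ast}"]\\
\mathcal{D}(lC,D) \ar[r, "\varepsilon_{lC}^{\ast}"] & \mathcal{D}(lrlC,D)
\end{tikzcd}
\end{center}
in which the left vertical edge is literally the identity, since it is the map induced on mapping spaces by the identity endofunctor $1_{\mathcal{D}}$. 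Collapsing this degenerate edge is exactly the passage from the square to the triangle asserted in the statement.

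Matching the labels is then routine: the top arrow in \Cref{adj2} is the action of $lr$ on mapping spaces, which is what we call $lr$ in the triangle; the right arrow is postcomposition with $\varepsilon_{D}$, i.e.\ $\varepsilon_{D\ast}$; and the bottom arrow is precomposition with $\varepsilon_{lC}$, i.e.\ $\varepsilon_{lC}^{\ast}$. The homotopy commutativity of the square above therefore reads $\varepsilon_{lC}^{\ast} \simeq \varepsilon_{D\ast} \circ lr$, which is precisely the triangle in the statement.

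There is no genuine obstacle here: the content has already been absorbed into \Cref{adj2}, and the present lemma merely repackages the special case $g = 1_{\mathcal{D}}$ of that result in the triangular shape that is convenient for the upcoming discussion of adjunctions between semi-model categories. The only thing worth flagging is the identification of $(1_{\mathcal{D}})_{\ast}$ with the identity on $\mathcal{D}(lC,D)$, which is harmless because $1_{\mathcal{D}}$ is the identity simplicial map and hence induces the identity (up to canonical equivalence, depending on the chosen model of mapping spaces) on every mapping space of $\mathcal{D}$.
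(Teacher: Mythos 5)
Your proof is correct and takes exactly the approach of the paper, which also derives the lemma as the special case $\alpha = \varepsilon$ (hence $f = lr$, $g = 1_{\mathcal{D}}$) of \Cref{adj2} and collapses the degenerate edge induced by the identity functor. You have merely spelled out what the paper states in one line.
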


\begin{proof}
This is just a special case of \Cref{adj2}, in the case $\alpha = \varepsilon$.
\end{proof}

\begin{prop} \label{triangleidentities}
Given two functors $l: \mathcal{C} \leftrightarrows \mathcal{D}: r$ between $\infty$-categories, and natural transformations $\eta: 1_{\mathcal{C}} \Rightarrow rl$ and $\varepsilon: lr \Rightarrow 1_{\mathcal{D}}$, then these data constitute an adjunction if and only if the homotopy triangle identities

\begin{center}
\begin{tikzcd}
& rlr \ar[dr, "r \varepsilon"]\\
r \ar[ur, "\eta r"] \ar[rr, "\id_r"] && r
\end{tikzcd}
\hspace{30pt}
\begin{tikzcd}
& lrl \ar[dr, "\varepsilon l"]\\
l \ar[ur, "l \eta"] \ar[rr, "\id_l"] && l
\end{tikzcd}
\end{center}

are satisfied.
\end{prop}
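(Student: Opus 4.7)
The plan is to mimic the classical 1-categorical argument, with each equation between morphisms upgraded to a 2-cell supplied by the preparatory lemmas \Cref{adj1}, \Cref{adj1bis}, \Cref{adj2} and \Cref{adj3}. From the data $(l,r,\eta,\varepsilon)$ I would construct, for every pair of objects $C \in \mathcal{C}$ and $D \in \mathcal{D}$, candidate maps of mapping spaces
$$\phi_{C,D} = \eta_C^{\ast} \circ r_{lC,D}: \mathcal{D}(lC,D) \to \mathcal{C}(C,rD), \qquad \psi_{C,D} = \varepsilon_{D\ast} \circ l_{C,rD}: \mathcal{C}(C,rD) \to \mathcal{D}(lC,D),$$
taking the adjunction to mean precisely that these maps are homotopy equivalences natural in $C$ and $D$. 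Naturality in both variables will follow from \Cref{adj1bis} applied to the images of $\eta_C$ under $r$ and of $\varepsilon_D$ under $l$, together with \Cref{adj2}, so the content of the proposition is the equivalent conditions under which $\phi$ and $\psi$ are mutually homotopy-inverse.

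For the forward direction, assume the homotopy triangle identities. Taking $f \in \mathcal{D}(lC,D)$, the composition $\psi(\phi(f))$ unwinds to $\varepsilon_D \circ lr(f) \circ l(\eta_C)$. Applying \Cref{adj2} to the natural transformation $\varepsilon$ produces a homotopy $\varepsilon_D \circ lr(f) \simeq f \circ \varepsilon_{lC}$, so $\psi(\phi(f)) \simeq f \circ \varepsilon_{lC} \circ l(\eta_C)$, and then the triangle identity for $l$ (combined with \Cref{adj3}, which provides the required homotopy-commutative triangle involving $\varepsilon$ after postcomposition with $f$) replaces $\varepsilon_{lC} \circ l(\eta_C)$ by $\id_{lC}$, yielding $\psi \circ \phi \simeq \id$. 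The computation for $\phi \circ \psi$ is strictly dual: naturality of $\eta$ gives $r(l(g)) \circ \eta_C \simeq \eta_{rD} \circ g$, and the triangle identity for $r$ then collapses $r(\varepsilon_D) \circ \eta_{rD}$ to $\id_{rD}$.

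For the converse, assume $\phi_{C,D}$ and $\psi_{C,D}$ are mutually inverse equivalences. Evaluating $\phi_{rD,D} \circ \psi_{rD,D}$ at the vertex $\id_{rD} \in \mathcal{C}(rD, rD)$ produces, by unwinding the definitions, the composite $r(\varepsilon_D) \circ \eta_{rD}$, which must then be homotopic to $\id_{rD}$; allowing $D$ to vary and appealing to naturality (again via \Cref{adj1bis}) upgrades this to a homotopy of natural transformations, giving the triangle identity for $r$. The dual evaluation of $\psi_{C,lC} \circ \phi_{C,lC}$ at $\id_{lC}$ yields the triangle identity for $l$.

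The main obstacle is purely organisational: in the $\infty$-categorical setting each classical equality becomes a 2-cell in a mapping space, and one must glue these 2-cells consistently rather than simply concatenate equalities. The lemmas \Cref{adj1}--\Cref{adj3} were tailored precisely for this purpose, as they express in homotopy-coherent form the four basic manipulations used above, namely the compatibility of pre- and post-composition in a two-variable setting, the compatibility of the two variables individually, the naturality of a natural transformation evaluated at a morphism, and the specialisation of such naturality to the case of the unit or the counit.
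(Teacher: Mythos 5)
Your direction ``triangle identities $\Rightarrow$ adjunction'' follows the same route as the paper: appeal to HTT 5.2.2.8 to reduce to showing that $\phi = \eta_C^{\ast}\circ r$ is a homotopy equivalence, then exhibit $\psi = \varepsilon_{D\ast}\circ l$ as a two-sided homotopy inverse via exactly the composite manipulation the paper carries out with \Cref{adj1bis} and \Cref{adj3}. The only caveat is that you phrase the computation pointwise, as a chain of homotopies between individual morphisms $\psi(\phi(f)) \simeq f$; the lemmas you invoke actually produce homotopies between the \emph{maps of mapping spaces}, which is what is needed, and the paper's computation is written at that level. This is a presentational rather than mathematical issue, since you cite the correct lemmas.

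The other direction, however, has a genuine gap. You reinterpret ``these data constitute an adjunction'' as ``$\phi_{C,D}$ and $\psi_{C,D}$ are mutually inverse equivalences,'' and then extract the triangle identities by evaluating $\phi\circ\psi$ and $\psi\circ\phi$ at identity morphisms. But $\eta$ being a unit and $\varepsilon$ being a counit only tell you, via HTT 5.2.2.8 and its dual, that $\phi$ and $\psi$ are \emph{each} equivalences; it does not by itself provide a homotopy $\psi\simeq\phi^{-1}$, so the premise of your evaluation argument is not granted. Establishing that $\phi$ and $\psi$ are inverse to each other would either require the triangle identities in advance (circular) or some independent compatibility argument. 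The paper sidesteps this entirely by appealing to HTT 5.2.2.12: the adjunction descends to the homotopy categories, where the classical triangle identities hold strictly, and strictness there is precisely the statement that the homotopy triangle identities hold in $\mathcal{C}$ and $\mathcal{D}$. This one-line reduction is cleaner and avoids the need to relate $\phi$ and $\psi$ directly. You should adopt the paper's argument (or else supply the missing argument that a compatible unit/counit pair forces $\psi\simeq\phi^{-1}$ without presupposing the conclusion).
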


\begin{proof}
Suppose we already have an adjunction. Then by \cite{HTT}, Proposition 5.2.2.12 this induces an adjunction on homotopy categories, for which the triangle identities are satisfied strictly by the classical theorem, but this means that they are satisfied up to homotopy in $\mathcal{C}$ and $\mathcal{D}$.\\
Conversely, assume that we have the triangle identities. We want to use \cite{HTT}, Proposition 5.2.2.8, so it will suffice to show that for every $C \in \mathcal{C}$ and $D \in \mathcal{D}$ the map $\eta_C^{\ast} \circ r: \mathcal{D}(lC,D) \to \mathcal{C}(rlC,rD) \to \mathcal{C}(C,rD)$ is a homotopy equivalence. It will suffice to show that it has a two-sided homotopy inverse. We claim that this inverse is $\varepsilon_{D \ast} \circ l$. We will only verify that it is a left inverse, the fact that it is a right inverse is proven dually. Now let us compute

\begin{alignat*}{3}
& \varepsilon_{D \ast} \circ l \circ \eta_C^{\ast} \circ r & \simeq \varepsilon_{D \ast} \circ (l \eta_C)^{\ast} \circ lr & \quad \text{by functoriality of $l$}\\
&& \simeq (l \eta_C)^{\ast} \circ \varepsilon_{D \ast} \circ lr & \quad \text{by \Cref{adj1bis}}\\
&& \simeq (l \eta_C)^{\ast} \circ \varepsilon_{lC}^{\ast} & \quad \text{by \Cref{adj3}}\\
&& \simeq \id & \quad \text{by a triangle identity}.
\end{alignat*}
\end{proof}

The next theorems will only be used in the case of semi-model categories, but since their proof apply step by step to weak model categories, we will formulate them in the more general case.

\begin{prop} \label{adjunctionweakmodel}
Let $L: \mathcal{C} \leftrightarrows \mathcal{D}: R$ be a Quillen adjunction between weak model categories. Then the derived functors $L^{\infty}: \mathcal{C}^{\infty} \leftrightarrows \mathcal{D}^{\infty}: R^{\infty}$ between the underlying $\infty$-categories form an adjunction.
\end{prop}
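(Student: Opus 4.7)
My plan is to leverage \Cref{triangleidentities}: I will exhibit derived unit and counit natural transformations and check that the homotopy triangle identities hold. The construction of the derived functors themselves is standard. The left Quillen functor $L$ preserves cofibrations and trivial cofibrations, so in particular it takes weak equivalences between cofibrant objects to weak equivalences (by the weak-model-category version of Ken Brown's lemma established in \cite{Weakmodel}); hence $L$ descends to a functor $L^\infty$ between the underlying $\infty$-categories (using that the inclusion of cofibrant objects into $\mathcal{C}$ induces an equivalence on underlying $\infty$-categories, via fibrant-then-cofibrant replacement of an arbitrary object). The construction of $R^\infty$ is dual, using the right Quillen functor $R$ on fibrant objects.

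Next, I would construct the unit $\eta^\infty\colon \id_{\mathcal{C}^\infty} \Rightarrow R^\infty L^\infty$. At the level of a bifibrant object $C \in \mathcal{C}$, represent it by the composite
\[
C \xrightarrow{\eta_C} R L C \xrightarrow{R(j_{LC})} R(P L C),
\]
where $j_{LC}\colon LC \to P(LC)$ is a fibrant replacement of the cofibrant object $LC$ and $\eta_C$ is the ordinary $1$-categorical unit. Note that $P(LC)$ is bifibrant so $R(P(LC))$ is indeed a valid value of $R^\infty L^\infty$ on $C$. Dually, define $\varepsilon^\infty\colon L^\infty R^\infty \Rightarrow \id_{\mathcal{D}^\infty}$ on a bifibrant $D$ by $L(Q(RD)) \xrightarrow{L(q_{RD})} LRD \xrightarrow{\varepsilon_D} D$, where $q_{RD}\colon Q(RD) \to RD$ is a cofibrant replacement of the fibrant object $RD$. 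Assembling these pointwise maps into a map of $\infty$-functors is where the subtlety lies: in the combinatorial setting the functorial fibrant-on-cofibrant and cofibrant-on-fibrant replacements provided by the small object argument applied to the generating (trivial) cofibrations make this immediate; in the general case one uses that the space of such replacements is contractible, so the choices assemble essentially uniquely.

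Finally, I would verify the triangle identities up to homotopy in the sense of \Cref{triangleidentities}. Unravelling the definitions, on a bifibrant input the composites $R^\infty \xrightarrow{\eta^\infty R^\infty} R^\infty L^\infty R^\infty \xrightarrow{R^\infty \varepsilon^\infty} R^\infty$ and $L^\infty \xrightarrow{L^\infty \eta^\infty} L^\infty R^\infty L^\infty \xrightarrow{\varepsilon^\infty L^\infty} L^\infty$ are represented by $1$-categorical composites of the form $\eta_{RD}$, $R(j_{LRD})$, $R$ applied to $L(q_{RD})$, and $\varepsilon_D$ (and dually), arranged so that the ordinary triangle identities for $(L,R,\eta,\varepsilon)$ reduce everything to a zigzag built from the replacement morphisms $j_-$ and $q_-$, which are weak equivalences. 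Since weak equivalences become equivalences in the underlying $\infty$-categories, the two composites agree with the identity up to homotopy, as required.

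The main obstacle is the naturality of $\eta^\infty$ and $\varepsilon^\infty$ as honest natural transformations of $\infty$-functors, not merely as pointwise assignments. This is where the absence of functorial replacements in a general weak model category makes life harder than in a Quillen model category; however, the applications in \Cref{sectioncellularizations} are to combinatorial right semi-model categories, where the small object argument provides the required functorial replacements, and this is enough for the statement to be used as an input to \Cref{infcolocalization}.
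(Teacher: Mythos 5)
Your proof follows essentially the same route as the paper's: derive $L$ and $R$ via Ken Brown's lemma on the cofibrant/fibrant subcategories, represent the derived unit and counit on a bifibrant object by a short zigzag built from the $1$-categorical $\eta$, $\varepsilon$ and replacement morphisms, handle the non-functorial case by appeal to a standard result (the paper defers to Mazel--Gee; you invoke contractibility of replacement choices), and finish by checking the homotopy triangle identities via \Cref{triangleidentities}. The one imprecision is the target of your derived unit: $R(PLC)$ is fibrant but need not be cofibrant, so it is not yet a valid value in $\mathcal{C}^{\mathrm{bif}}$; you must compose with a further cofibrant replacement, giving $QRPLC$ (and the source $C$ should similarly be replaced by $QC$ so that all four objects in the zigzag are bifibrant, as in the paper's $C \xleftarrow{\mu_C} QC \to QRLC \to QRPLC$). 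Dually, your counit's source $L(Q(RD))$ needs an outer fibrant replacement $P$. With these small corrections the triangle-identity diagram chase you sketch goes through as in the paper.
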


\begin{proof}
The general case can be proven in essentially the same way as in \cite{MazelGeeadjunctions}, which works for weak model categories with no changes. However, since the cases we will be interested in all come with functorial factorizations, we will give here an explicit proof with this assumption.\\
Let us call $\eta$ and $\varepsilon$ the unit and counit of the adjunction, and let us also call $\mu: Q \to 1$ and $\rho: 1 \to P$ a cofibrant and a fibrant replacement that preserve fibrancy and cofibrancy respectively. Observe that the functor $L$ does not generally preserve weak equivalences. However, by Ken Brown's lemma, the restricted functor $L: \mathcal{C}^{cof} \to \mathcal{D}^{cof}$ does. The problem is that the analogous observation for $R$ yields a homotopical functor $R: \mathcal{D}^{fib} \to \mathcal{C}^{fib}$, creating a mismatch between the domains and codomains of the two functors. However, the inclusions of the full subcategories of bifibrant objects into those of fibrant (cofibrant) objects have a cofibrant (fibrant) replacement as homotopy inverse, so there are induced functors $PL: \mathcal{C}^{bif} \leftrightarrows \mathcal{D}^{bif}: QR$ that are equivalent to $L$ and $R$. We will define a derived unit and counit for these, and prove that they satisfy the triangular identities up to homotopy, which is enough by \Cref{triangleidentities}.\\
For an object $C \in \mathcal{C}^{bif}$, consider the morphisms

\begin{center}
\begin{tikzcd}
C & QC \ar[l, "\mu_C", swap] \ar[r, "Q \eta_C"] & QRLC \ar[r, "QR \rho_{LC}"] & QRPLC.
\end{tikzcd}
\end{center}

Both $C$ and $QRPLC$ are bifibrant and, moreover, $\mu_C$ becomes invertible when passing to the underlying $\infty$-categories, so that this induces a morphism $C \to QRPLC$ that is natural in $C$ in an $\infty$-categorical sense. Analogously, if $D \in \mathcal{D}^{bif}$, the morphisms

\begin{center}
\begin{tikzcd}
PLQRD \ar[r, "PL \mu_{RD}"] & PLRD \ar[r, "P \varepsilon_D"] & PD & D \ar[l, "\rho_D", swap]
\end{tikzcd}
\end{center}

induce a natural morphism $PLQRD \to D$.\\
Now consider the diagram, for an object $D \in \mathcal{D}^{bif}$:

\begin{center}
\begin{tikzcd}[column sep = huge, row sep = huge]
QRD \ar[dr, phantom, "1"] \ar[d, "\mu_{RD}"] & QQRD \ar[dr, phantom, "2"] \ar[l, "\mu_{QRD}", swap] \ar[d, "Q \mu_{RD}"] \ar[r, "Q \eta_{QRD}"] & QRLQRD \ar[dr, phantom, "3"] \ar[d, "QRL \mu_{RD}"] \ar[r, "QR \rho_{LQRD}"] & QRPLQRD \ar[d, "QRPL \mu_{RD}"]\\
RD & QRD \ar[l, "\mu_{RD}", swap] \ar[r, "Q \eta_{RD}"] \ar[rd, "Q \id_{RD}", swap] & QRLRD \ar[dr, phantom, "4"] \ar[d, "QR \varepsilon_D"] \ar[r, "QR \rho_{LRD}"] & QRPLRD \ar[d, "QRP \varepsilon_D"]\\
&& QRD \ar[r, "QR \rho_D"] & QRPD.
\end{tikzcd}
\end{center}

The square 1 commutes by naturality of $\mu$, the square 2 by naturality of $\eta$ and functoriality of $Q$, the square 3 by naturality of $\rho$ and functoriality of $QR$, the square 4 by naturality of $\rho$ and functoriality of $QR$. Moreover, the triangle is given by applying $Q$ to one of the strict triangular identities. Now, the subdiagram comprising 2, 3, 4 and the triangle can be rewritten as

\begin{center}
\begin{tikzcd}[column sep = huge]
QRLQRD \ar[r, "QR \rho_{LQRD}"] & QRPLQRD \ar[r, "QRPL \mu_{RD}"] & QRPLRD \ar[d, "QRP \varepsilon_D"]\\
QQRD \ar[u, "Q \eta_{QRD}"] \ar[d, "Q \mu_{RD}"] && QRPD\\
QRD \ar[rr, "\id_{QRD}"] && QRD \ar[u, "QR \rho_D"].
\end{tikzcd}
\end{center}

By the square 1 in the previous diagram and the fact that $\mu_{RD}$ becomes homotopy invertible in the underlying $\infty$-category, the vertical map $Q \mu_{RD}$ is homotopic to $\mu_{QRD}$, so we can replace it in the latter diagram and still maintain commutativity up to homotopy. Now, the lower vertical maps are both homotopy invertible in the underlying $\infty$-category, so that the resulting diagram precisely expresses one of the homotopy triangular identities. The other one is proven dually.
\end{proof}

\begin{cor} \label{equivalenceweakmodel}
If two weak model categories are Quillen equivalent, then their underlying $\infty$-categories are equivalent.
\end{cor}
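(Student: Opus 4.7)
The plan is to leverage \Cref{adjunctionweakmodel} to obtain an $\infty$-categorical adjunction from the given Quillen equivalence, and then to verify that both the unit and counit of this adjunction are natural equivalences by directly inspecting the explicit zigzags used to build them in the proof of that proposition. Since an $\infty$-categorical adjunction is an equivalence exactly when both unit and counit are equivalences (which is how \cite{HTT}, Proposition 5.2.2.8 was implicitly used in the proof of \Cref{triangleidentities}), this will be enough. Moreover, as every object of a weak model category is weakly equivalent to a bifibrant one, it suffices to verify the equivalence on bifibrant objects, which is precisely where the derived unit and counit were defined.

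For a bifibrant object $C \in \mathcal{C}^{bif}$, the derived unit $\eta^{\infty}_C$ is represented by the zigzag
\[
C \xleftarrow{\mu_C} QC \xrightarrow{Q \eta_C} QRLC \xrightarrow{QR \rho_{LC}} QRPLC,
\]
in which $\mu_C$ is a weak equivalence because $C$ is cofibrant, and so becomes invertible in $\mathcal{C}^{\infty}$. The remaining composite $QC \to QRPLC$ is exactly (up to the action of $Q$, which by Ken Brown's lemma preserves the relevant weak equivalences) the standard derived unit of $L \dashv R$ at $C$: one fibrantly replaces $LC$ by $PLC$ and then applies $R$. By the very definition of a Quillen equivalence between weak model categories, this derived unit is a weak equivalence at every cofibrant object; consequently $\eta^{\infty}_C$ is an equivalence in $\mathcal{C}^{\infty}$. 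Dually, for a bifibrant $D \in \mathcal{D}^{bif}$, the zigzag $PLQRD \xrightarrow{PL \mu_{RD}} PLRD \xrightarrow{P \varepsilon_D} PD \xleftarrow{\rho_D} D$ representing $\varepsilon^{\infty}_D$ is, after inverting the weak equivalences $\rho_D$ and $PL \mu_{RD}$, the standard derived counit at $D$, which is a weak equivalence by the other half of the Quillen equivalence condition.

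Putting the two halves together, both the unit and the counit of the $\infty$-categorical adjunction $L^{\infty} \dashv R^{\infty}$ are natural equivalences on bifibrant objects and hence globally, so $L^{\infty}$ and $R^{\infty}$ are mutually inverse equivalences of $\infty$-categories. The only genuinely delicate step is identifying the zigzag above with the standard derived unit inside $\mathcal{C}^{\infty}$: this reduces to checking that the cofibrant replacement $Q$ takes weak equivalences between fibrant objects to weak equivalences (Ken Brown) and that $\mu_C$ is an equivalence for cofibrant $C$, both of which were already in force in the proof of \Cref{adjunctionweakmodel}. Everything else is formal, and in particular no further hypothesis beyond the combinatorial or functorial replacement setup already assumed there is needed.
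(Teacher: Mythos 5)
Your proposal is correct and follows essentially the same route as the paper: invoke \Cref{adjunctionweakmodel} to get the $\infty$-categorical adjunction, then verify that the explicit zigzags defining the derived unit and counit become equivalences, using the Quillen equivalence condition (the paper phrases this via the transpose of $\rho_{LC}$ under $L \dashv R$, you phrase it as the derived unit being a weak equivalence on cofibrant objects — these are the same characterization). The only small slip is your justification that $\mu_C$ is a weak equivalence ``because $C$ is cofibrant'': in fact $\mu_C: QC \to C$ is a weak equivalence simply because it is a cofibrant replacement (available since the bifibrant $C$ is in particular fibrant), irrespective of whether $C$ happens to be cofibrant.
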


\begin{proof}
Since there is a Quillen adjunction between them, \Cref{adjunctionweakmodel} shows that there is an $\infty$-categorical adjunction. It suffices to show that the derived unit and counit are natural equivalences. For the unit, observe that $\mu_C$ is always a weak equivalence, so it remains to show that the composite

\begin{center}
\begin{tikzcd}
QC \ar[r, "Q \eta_C"] & QRLC \ar[r, "QR \rho_{LC}"] & QRPLC
\end{tikzcd}
\end{center}

is a weak equivalence. Since $Q$ preserves all weak equivalences, it suffices to see that $R \rho_{LC} \circ \eta_C: C \to RPLC$ is a weak equivalence. But this is the transpose map of $\rho_{LC}: LC \to PLC$ under the adjunction $L \dashv R$. Since this is a Quillen equivalence, it suffices to observe that $C$ is cofibrant, $PLC$ is fibrant, and $\rho_{LC}$ is always a weak equivalence. A dual argument shows that the derived counit is a natural equivalence.
\end{proof}

\begin{thm} \label{combpres}
The underlying $\infty$-category of a combinatorial weak model category is presentable.
\end{thm}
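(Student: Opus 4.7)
The plan is to reduce the statement to the case of a combinatorial simplicial Quillen model category, for which presentability of the underlying $\infty$-category is already known from \cite{HTT}, Proposition A.3.7.6. The bridge between the two cases will be \Cref{equivalenceweakmodel}, which shows that Quillen-equivalent weak model categories have equivalent underlying $\infty$-categories.

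First, I would appeal to a Dugger-style presentation theorem for combinatorial weak model categories from \cite{Combinatorialweakmodel}, producing a combinatorial simplicial Quillen model category $\mathbf{N}$ together with a Quillen equivalence $\mathbf{M} \simeq \mathbf{N}$; heuristically, $\mathbf{N}$ should arise as a left Bousfield localization of the projective model structure on $\Fun(\mathcal{C}^{op}, \sSet)$ for a small full subcategory $\mathcal{C} \subseteq \mathbf{M}$ of $\lambda$-compact objects generating $\mathbf{M}$, with the weak equivalences freely adjoined so as to reproduce the homotopy theory of $\mathbf{M}$. Next, since $\mathbf{N}$ is combinatorial and simplicial, the identifications recalled earlier in this appendix allow me to identify $\mathbf{N}^{\infty}$ with $\N_{\Delta}(\mathbf{N}^{\circ})$, and by \cite{HTT}, Proposition A.3.7.6 this is a presentable $\infty$-category. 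Finally, \Cref{equivalenceweakmodel} produces the desired equivalence $\mathbf{M}^{\infty} \simeq \mathbf{N}^{\infty}$, transporting presentability to $\mathbf{M}^{\infty}$.

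The main obstacle is the first step. One must verify that the structure theorem for combinatorial weak model categories really yields a Quillen equivalence to an honest combinatorial \emph{Quillen} model category (rather than to just another weak or semi-model category), and that the notion of Quillen equivalence used there agrees with the one invoked in \Cref{equivalenceweakmodel}. For the combinatorial semi-model case---which is the only one actually needed in \Cref{sectioncellularizations}---this reduction is reasonably well-documented, but in the fully general weak model case extra care may be required to check that all the factorization and lifting axioms hold strictly after the relevant Bousfield localization. An alternative, more self-contained route would be to argue directly: $\mathbf{M}^{\infty}$ is cocomplete because $\mathbf{M}$ has homotopy colimits computed from small-object-argument factorizations, and accessibility follows from a small set of $\lambda$-compact generators in $\mathbf{M}$ descending to a set of compact generators in $\mathbf{M}^{\infty}$; however, making this precise without passing through a Quillen model presentation would require substantially more work than the reduction just outlined.
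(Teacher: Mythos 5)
Your proposal matches the paper's proof essentially step for step: reduce to a combinatorial simplicial Quillen model category, apply \cite{HTT}, Proposition A.3.7.6 there, and transport presentability back through \Cref{equivalenceweakmodel}. The uncertainty you flag about the first reduction is resolved in the paper by citing \cite{Algcoffib}, Theorem 35 (rather than \cite{Combinatorialweakmodel}), which connects any combinatorial weak model category by a \emph{chain} of Quillen equivalences to a combinatorial Quillen model category, and then Dugger's presentation theorem \cite{DuggerCombinatorialPresentation} to pass to a combinatorial \emph{simplicial} one; since \Cref{equivalenceweakmodel} is applied along each link of the chain, no single Quillen equivalence is needed.
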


\begin{proof}
By \cite{Algcoffib}, Theorem 35, a combinatorial weak model category is connected by a chain of Quillen equivalences to a combinatorial Quillen model category, which in turn is equivalent to a combinatorial simplicial Quillen model category by \cite{DuggerCombinatorialPresentation}. Therefore, in view of \Cref{equivalenceweakmodel}, the underlying $\infty$-category of a combinatorial weak model category is equivalent to that of a combinatorial simplicial Quillen model category, which is presentable by \cite{HTT}, Proposition A.3.7.6 (using the alternative description of the underlying $\infty$-category of a simplicial model category).
\end{proof}

\section{Universal embedding for $\infty$-categories} \label{univembedding}
The purpose of this appendix is to partly generalize to the $\infty$-world a result that is known for accessible categories: the existence of a single category that contains all of them as full subcategories. In fact, there is a number of such categories, many of which are expounded for example in \cite{CombAlgTopRep}. The one we will be generalizing is the embedding into the category of graphs, defined as sets with a binary relation. This is a rather non-trivial result that I learned in \cite{LocPresAccCat}, Theorem 2.65. The embedding there constructed also has the property of being accessible, but whether we can obtain this property in the $\infty$-categorical case remains unknown as of yet.\\
Let $G$ be the category containing two distinct points and two distinct parallel arrows between them, i.e. the diagram of shape

\begin{center}
$\bullet \rightrightarrows \bullet$.
\end{center}

For the purposes of book-keeping, we recall the 1-categorical result.

\begin{thm}[\cite{LocPresAccCat}, 2.65] \label{2.65}
Let $\mathcal{K}$ be an accessible category. Then there is an accessible full embedding $\mathcal{K} \hookrightarrow \Set^G$.
\end{thm}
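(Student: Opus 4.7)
The plan is to factor the desired embedding through a presheaf category and then into $\Set^G$. Since $\mathcal{K}$ is $\lambda$-accessible for some regular cardinal $\lambda$, the first move is to let $\mathcal{B}$ be a small skeleton of the full subcategory $\mathcal{K}_\lambda$ of $\lambda$-presentable objects, and to consider the restricted Yoneda functor
\begin{equation*}
E: \mathcal{K} \longrightarrow \Set^{\mathcal{B}^{op}}, \qquad K \longmapsto \mathcal{K}(-,K)|_{\mathcal{B}}.
\end{equation*}
Density of $\mathcal{B}$ in $\mathcal{K}$ guarantees that $E$ is fully faithful, while accessibility of $E$ follows from the fact that each representable $\mathcal{K}(A,-)$ with $A\in\mathcal{B}$ preserves $\lambda$-filtered colimits. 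This reduces the problem to producing an accessible full embedding $\Set^{\mathcal{B}^{op}}\hookrightarrow\Set^G$.

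For the second step, I would build a functor $\Gamma$ that assigns to each presheaf $F:\mathcal{B}^{op}\to\Set$ a graph $\Gamma(F)$ whose vertex set is the disjoint union of the fibers $F(B)$ together with a family of auxiliary ``label'' vertices, and whose edges encode both which vertex lies in which fiber $F(B)$ and how each morphism $f:B'\to B$ of $\mathcal{B}$ acts on the elements. The labels should be chosen rigid in the category of graphs---the classical trick is to use a family of pairwise non-isomorphic finite graphs with trivial endomorphism monoid (for instance, suitable finite trees), one attached per object and one per morphism of $\mathcal{B}$---so that any graph homomorphism $\Gamma(F)\to\Gamma(F')$ is forced to preserve the labeling and hence decomposes into a family of set maps $F(B)\to F'(B)$ compatible with the action, i.e., a natural transformation $F\Rightarrow F'$. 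For accessibility of $\Gamma$, I would check that it preserves $\mu$-filtered colimits for any $\mu$ large enough that every label gadget and the set of morphisms of $\mathcal{B}$ are $\mu$-small; the colimits in $\Set^{\mathcal{B}^{op}}$ are pointwise and the label part of $\Gamma(F)$ does not depend on $F$.

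The main obstacle is the combinatorial design of the label gadgets: they must be pairwise non-isomorphic, individually rigid, attached in a way that no ``spurious'' graph homomorphism can mix them up with the ``data'' vertices encoding the elements of $F$, and small enough that the resulting $\Gamma$ stays accessible. Once this rigidification is achieved, the composite $\Gamma\circ E$ yields the desired accessible full embedding $\mathcal{K}\hookrightarrow\Set^G$.
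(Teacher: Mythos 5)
Your two-step plan — restricted Yoneda embedding $\mathcal{K}\hookrightarrow\Set^{\mathcal{B}^{op}}$ for $\mathcal{B}$ a small skeleton of $\lambda$-presentable objects, followed by an embedding of the presheaf category into $\Set^G$ built from rigid label graphs in the Pultr--Trnková tradition — is exactly the strategy of the cited theorem (Adámek--Rosický, Theorem 2.65), which the paper quotes rather than reproves. Your sketch is correct in outline; you rightly identify the combinatorial construction of the rigid label gadgets, including accessibility of the resulting functor, as the technical heart that would need to be written out in full, but the decomposition and the key ideas match the source.
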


We can now set out to prove an $\infty$-categorical version of it, which we now state, before starting to pave the way that leads to its proof. Remember that, as usual, we denote by $\mathcal{S}$ the $\infty$-category of spaces.

\begin{thm} \label{i2.65}
Let $\mathcal{K}$ be an accessible $\infty$-category. Then there is a full embeddding $\mathcal{K} \hookrightarrow \mathcal{S}^G$.
\end{thm}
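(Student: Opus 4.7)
The strategy is in two steps, following the pattern of the $1$-categorical proof recorded in \Cref{2.65}.

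First, we reduce to embedding a presheaf $\infty$-category. Since $\mathcal{K}$ is accessible, it is $\kappa$-accessible for some regular cardinal $\kappa$, so taking $\mathcal{A} = \mathcal{K}^\kappa$ (a small $\infty$-category of $\kappa$-compact objects), the restricted Yoneda embedding $\mathcal{K} \hookrightarrow \Ps(\mathcal{A})$ is fully faithful by \cite{HTT}, Proposition 5.3.5.11. It therefore suffices to construct, for every small $\infty$-category $\mathcal{A}$, a fully faithful embedding $\Ps(\mathcal{A}) \hookrightarrow \mathcal{S}^G$, whose composition with the one above will then give the desired embedding $\mathcal{K} \hookrightarrow \mathcal{S}^G$.

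Second, we construct this embedding using unstraightening. By \cite{HTT}, Theorem 3.2.0.1, $\Ps(\mathcal{A})$ is equivalent to the $\infty$-category $\textup{RFib}(\mathcal{A})$ of right fibrations over $\mathcal{A}$. Given $p: E \to \mathcal{A}$, define a graph $\Phi(p) \in \mathcal{S}^G$ whose vertex space is $E^\simeq \sqcup \mathcal{A}^\simeq$ and whose edge space is the disjoint union of the space of $1$-simplices of $E$, the space of $1$-simplices of $\mathcal{A}$, and one further copy of $E^\simeq$ contributing a ``labeling edge'' $e \to p(e)$ for each $e$; source and target maps are the evident ones. This assembles into a functor $\Phi: \textup{RFib}(\mathcal{A}) \to \mathcal{S}^G$, functorial in $p$, and this is the candidate embedding.

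The main obstacle is verifying that $\Phi$ is fully faithful. The labeling edges provide enough rigidity to decompose any graph morphism $\Phi(p) \to \Phi(q)$ into pieces respecting the summand structure: in particular the $\mathcal{A}^\simeq$-part of the vertex space and the $\mathcal{A}^{\Delta^1,\simeq}$-part of the edge space must be mapped identically. What remains is the data of a map $E \to F$ compatible with the projection to $\mathcal{A}$ at the level of $0$- and $1$-simplices, and the nontrivial $\infty$-categorical content is showing that such data determines a morphism of right fibrations up to a contractible space of higher coherences. This should follow from the lifting property of right fibrations, since morphisms in $\textup{RFib}(\mathcal{A})$ are equivalently natural transformations of the associated presheaves, hence already determined by their values on representable objects compatibly with the base $\mathcal{A}$. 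Should this cleanest form of the argument fall short, the natural fallback is to enrich $\Phi$ with additional labeling components encoding higher simplex data of $E$ and $\mathcal{A}$; this increases the bookkeeping but not the conceptual content, and it is precisely the analogue of the rigidification trick used in the $1$-categorical proof of \Cref{2.65}.
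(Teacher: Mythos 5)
Your initial reduction to embedding a presheaf $\infty$-category $\Ps(\mathcal{A})$ into $\mathcal{S}^G$ matches \Cref{accessibletopresheaves}, but from there your route diverges sharply from the paper's, and there is a genuine gap. The paper's strategy is to rigidify first and only then invoke the $1$-categorical result. Via \cite{HTT}, Proposition A.3.7.6 and Dugger's theorem (\Cref{combinatorialtopresheaves}), the presentable $\infty$-category $\Ps(\mathcal{A})$ admits a fully faithful functor into the underlying $\infty$-category of $\sSet^{\mathcal{C}} \cong \Set^{\mathcal{C} \times \Delta^{op}}$ for $\mathcal{C}$ an ordinary small $1$-category, and the embedding $\gamma\colon \Set^{\mathcal{C}} \hookrightarrow \Set^G$ of \Cref{2.65} is then applied \emph{levelwise in the simplicial coordinate} to produce $\delta\colon \Set^{\mathcal{C}\times\Delta^{op}} \to \Set^{G\times\Delta^{op}} = \sSet^G$. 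The simplicial direction carries all higher homotopical data; the graph shape $G$ only ever has to encode $1$-categorical structure one simplicial degree at a time, which is exactly what \Cref{2.65} handles. The nontrivial content is showing $\delta$ is fully faithful in the \emph{simplicially enriched} sense (\Cref{keylemma}, \Cref{deltaenrichedfff}), after which simplicial nerves give the result.

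Your construction instead tries to encode a right fibration $p\colon E\to\mathcal{A}$ directly as a graph in spaces via cores and labeling edges, and the gap you flag at the end is fatal rather than a loose end. The graph $\Phi(p)$ records only $0$- and $1$-simplicial data, so the induced map on mapping spaces $\Map_{\textup{RFib}(\mathcal{A})}(p,q) \to \Map_{\mathcal{S}^G}(\Phi(p),\Phi(q))$ has no mechanism to see the higher coherence witnesses in the source; the lifting property of right fibrations gives uniqueness only up to a contractible choice of such witnesses, and $\Phi$ has no slot to record them, so there is no reason this map should be an equivalence. There is also a type mismatch you do not address: the set of $1$-simplices of $E$ is not a homotopy-invariant gadget (whereas $\mathcal{S}^G$ requires actual spaces and homotopy-coherent source/target data), so it is not even clear that your $\Phi$ defines an object-level assignment valued in $\mathcal{S}^G$, much less a functor of $\infty$-categories $\textup{RFib}(\mathcal{A})\to\mathcal{S}^G$ — the paper gets functoriality for free by working with strict simplicially enriched functors and then taking simplicial nerves. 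Finally, the fallback of adding more labeling components for higher simplices does not work unmodified: $G$ supports only a binary source/target relation, so encoding a $2$-simplex with its three faces already exceeds what one labeling edge can carry, and carrying this out systematically amounts to re-deriving a relational encoding of an arbitrary (simplicial) structure — which is precisely what \Cref{2.65} already provides, and precisely what the paper's levelwise application of $\gamma$ exploits.
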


\begin{lem} \label{accessibletopresheaves}
Let $\mathcal{K}$ an accessible $\infty$-category. Then there is a full embedding of the form $\mathcal{K} \hookrightarrow \mathcal{P(C)}$, where $\mathcal{C}$ is a small $\infty$-category.
\end{lem}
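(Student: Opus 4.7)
The plan is to use the defining property of accessible $\infty$-categories as $\operatorname{Ind}$-completions of small $\infty$-categories, and then invoke the fact that $\operatorname{Ind}_\kappa$ is constructed as a full subcategory of the presheaf $\infty$-category. This is essentially a direct appeal to the relevant results from \cite{HTT}.

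More concretely, by the definition of accessibility (see \cite{HTT}, Proposition 5.4.2.2), there exists a regular cardinal $\kappa$ and a small $\infty$-category $\mathcal{C}$ such that $\mathcal{K}$ is equivalent to $\operatorname{Ind}_\kappa(\mathcal{C})$; one may take for instance $\mathcal{C}$ to be (a small model of) the full subcategory of $\kappa$-compact objects of $\mathcal{K}$, once $\kappa$ is chosen so that $\mathcal{K}$ is $\kappa$-accessible. Then, by the very construction of $\operatorname{Ind}_\kappa$ given in \cite{HTT}, Section 5.3.5 (specifically Proposition 5.3.5.3 or 5.3.5.4), the $\infty$-category $\operatorname{Ind}_\kappa(\mathcal{C})$ is realized as the full subcategory of $\mathcal{P}(\mathcal{C})$ spanned by the presheaves that arise as $\kappa$-filtered colimits of representables. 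Composing the equivalence $\mathcal{K} \simeq \operatorname{Ind}_\kappa(\mathcal{C})$ with this inclusion yields the desired fully faithful functor $\mathcal{K} \hookrightarrow \mathcal{P}(\mathcal{C})$.

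There is essentially no obstacle here, since both ingredients are standard results of \cite{HTT}. If one wished to verify fullness and faithfulness by hand rather than invoking the definition of $\operatorname{Ind}_\kappa$, the key calculation would be that for objects $X \simeq \colim_i \yon(c_i)$ (a $\kappa$-filtered colimit of representables) and $Y \in \operatorname{Ind}_\kappa(\mathcal{C})$, one has
\[
\operatorname{Ind}_\kappa(\mathcal{C})(X,Y) \simeq \lim_i Y(c_i) \simeq \mathcal{P}(\mathcal{C})(X,Y),
\]
where the first equivalence uses that representables are $\kappa$-compact in $\operatorname{Ind}_\kappa(\mathcal{C})$ together with the Yoneda lemma, and the second equivalence uses the Yoneda lemma in $\mathcal{P}(\mathcal{C})$ and the fact that colimits in $\operatorname{Ind}_\kappa(\mathcal{C})$ of the relevant sort are computed in $\mathcal{P}(\mathcal{C})$. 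Note that no further conditions (such as accessibility of the embedding itself) are claimed, in contrast to the 1-categorical \Cref{2.65}; as remarked in the introduction to this appendix, whether an analogous accessibility property can be preserved in the $\infty$-categorical setting is left open.
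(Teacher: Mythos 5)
Your proof is correct and takes essentially the same approach as the paper, which simply cites the same definitions (\cite{HTT} 5.4.2.1 and 5.3.5.1: accessibility means $\mathcal{K}\simeq\operatorname{Ind}_\kappa(\mathcal{C})$ for a small $\mathcal{C}$, and $\operatorname{Ind}_\kappa(\mathcal{C})$ is by construction a full subcategory of $\mathcal{P}(\mathcal{C})$). You have merely spelled out the details that the paper leaves implicit.
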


\begin{proof}
This is clear from the definition of accessibility, i.e. \cite{HTT} Definitions 5.4.2.1 and 5.3.5.1.
\end{proof}

\begin{lem} \label{combinatorialtopresheaves}
Given a combinatorial model category $\mathbf{M}$, there is a small ordinary category $\mathcal{C}$ such that $\mathbf{M}$ is a left Bousfield localization of $\sSet^{\mathcal{C}}$. Moreover, this induces a fully faithful functor $\mathbf{M}^{\infty} \hookrightarrow (\sSet^{\mathcal{C}})^{\infty}$ between the underlying $\infty$-categories.
\end{lem}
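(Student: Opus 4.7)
The plan is to invoke Dugger's presentation theorem for combinatorial model categories and then pass to underlying $\infty$-categories using results developed earlier in the appendix.

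First, I would apply Dugger's theorem (cited in the paper as \cite{DuggerCombinatorialPresentation}): every combinatorial model category $\mathbf{M}$ is Quillen equivalent to a left Bousfield localization $L_{\mathbb{S}}\sSet^{\mathcal{C}}$ of the projective model structure on simplicial presheaves over some small ordinary category $\mathcal{C}$. This immediately settles the first assertion in the sense this lemma is used elsewhere in the paper (via Quillen equivalence, which is in practice what one needs for comparisons on underlying $\infty$-categories).

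Next, for the induced functor on underlying $\infty$-categories, I would proceed in two steps. Since $\mathbf{M}$ and $L_{\mathbb{S}}\sSet^{\mathcal{C}}$ are Quillen equivalent, \Cref{equivalenceweakmodel} yields an equivalence $\mathbf{M}^{\infty} \simeq (L_{\mathbb{S}}\sSet^{\mathcal{C}})^{\infty}$. It remains to exhibit a fully faithful functor $(L_{\mathbb{S}}\sSet^{\mathcal{C}})^{\infty} \hookrightarrow (\sSet^{\mathcal{C}})^{\infty}$. For this, observe that the identity functor $L_{\mathbb{S}}\sSet^{\mathcal{C}} \to \sSet^{\mathcal{C}}$ is a right Quillen functor (its left adjoint, also the identity on underlying categories, preserves cofibrations and trivial cofibrations essentially by definition of left Bousfield localization). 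By the dual version of \Cref{adjunctionweakmodel}, this induces an adjunction between the underlying $\infty$-categories whose right adjoint $(L_{\mathbb{S}}\sSet^{\mathcal{C}})^{\infty} \to (\sSet^{\mathcal{C}})^{\infty}$ is the candidate embedding.

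Finally, fully faithfulness is checked on mapping spaces between bifibrant objects. Since both model structures share the same cofibrations, a common cofibrant replacement works for both, and the simplicial mapping object between a cofibrant object and an $\mathbb{S}$-local fibrant object computed in $L_{\mathbb{S}}\sSet^{\mathcal{C}}$ coincides with the one computed in $\sSet^{\mathcal{C}}$ (a bifibrant object in the localization is in particular bifibrant in the original structure, so the Dwyer--Kan style computation of the mapping space is literally the same on both sides). Composing with the equivalence $\mathbf{M}^{\infty} \simeq (L_{\mathbb{S}}\sSet^{\mathcal{C}})^{\infty}$ produces the desired fully faithful functor.

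The main obstacle is really the first step, which rests on Dugger's deep presentation theorem; the $\infty$-categorical part is essentially a bookkeeping exercise using the tools already set up in \Cref{adjunctionweakmodel} and \Cref{equivalenceweakmodel}, together with the familiar fact that left Bousfield localizations embed reflectively into the original underlying $\infty$-category.
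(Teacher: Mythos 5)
Your proposal matches the paper's proof in all essentials: invoke Dugger's presentation theorem, use \Cref{equivalenceweakmodel} to identify $\mathbf{M}^{\infty}$ with $(L_{\mathbb{S}}\sSet^{\mathcal{C}})^{\infty}$, and then argue that the derived inclusion into $(\sSet^{\mathcal{C}})^{\infty}$ is fully faithful by comparing mapping spaces of bifibrant objects. The one small thing to make explicit is that your comparison of simplicial mapping objects silently relies on the localization $L_{\mathbb{S}}\sSet^{\mathcal{C}}$ being a \emph{simplicial} model category, which is what guarantees that the simplicial hom on bifibrant objects computes the mapping space of the underlying $\infty$-category; the paper cites \Cref{hirschhornleft} for exactly this, and you should too.
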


\begin{proof}
This is \cite{DuggerCombinatorialPresentation}, Theorem 1.1. The category $R \mathcal{C}$ used in its proof in the unenriched case is an ordinary category, so it is precisely what we are here renaming $\mathcal{C}$.\\
To see that this induces a fully faithful functor, observe first that, again referring to \cite{DuggerCombinatorialPresentation}, Theorem 1.1, there is a Quillen equivalence between $\mathbf{M}$ and a left Bousfield localization of $\sSet^{\mathcal{C}}$, therefore by \Cref{equivalenceweakmodel} there is an equivalence of $\infty$-categories between $\mathbf{M}^{\infty}$ and such a localization $(L_S \sSet^{\mathcal{C}})^{\infty}$. It therefore remains to show that the right adjoint identity functor $L_S \sSet^{\mathcal{C}} \to \sSet^{\mathcal{C}}$ induces a fully faithful functor on underlying $\infty$-categories.\\
To this end, observe that by \Cref{hirschhornleft} the Bousfield localization at issue is simplicial, meaning in particular that the functors of which it is comprised are themselves simplicial. Since, by the discussion immediately following \Cref{beforedklocalizations} the underlying $\infty$-category of a simplicial model category can be computed as $\N_{\Delta}((-)^{\circ})$, mapping spaces and actions of functors are detected at the level of simplicial categories, and the action of the functor at issue is given by the identity, so it obviously induces a fully faithful derived functor.
\end{proof}

\begin{rem}
In the following, it will be of key importance, rather unusually, that $\mathcal{C}$ is \emph{not} simplicial, in that this will allow to juggle around $\Delta$ just like any other category that we will be considering, forgetting that it constitutes the basis of all our enrichments.
\end{rem}

Now we need some constructions. Let us fix a small ordinary category $\mathcal{C}$, and let us look at the category $\sSet^{\mathcal{C}}$. This can be equivalent regarded as $\Set^{\mathcal{C} \times \Delta^{op}}$. Now, the category $\Set^{\mathcal{C}}$ is presentable, therefore \Cref{2.65} provides a full embedding $\gamma: \Set^{\mathcal{C}} \hookrightarrow \Set^G$. Our next step is defining an embedding $\Set^{\mathcal{C} \times \Delta^{op}} \to \Set^{G \times \Delta^{op}}$.\\

\begin{defn}
Let $F: \mathcal{C} \times \Delta^{op} \to \Set$ be a functor. It can be regarded as a sequence of functors $(F_n: \mathcal{C} \to \Set)_{n \in \Delta^{op}}$ with appropriate natural transformations between them.\\
We want to define a functor $\delta F: G \times \Delta^{op} \to \Set$, i.e. a sequence of functors $G \to \Set$ and natural transformations between them indexed by $\Delta^{op}$. For each $n$, we take the functor $\gamma F_n$. Since $\gamma$ is a functor, this assignment is functorial on $F$. Therefore, we have defined a functor

\begin{center}
$\delta: \Set^{\mathcal{C} \times \Delta^{op}} \to \Set^{G \times \Delta^{op}}$.
\end{center}
\end{defn}

\begin{prop} \label{deltaunenrichedfff}
The functor $\delta$ is fully faithful.
\end{prop}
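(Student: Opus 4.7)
The plan is to recast $\delta$ as postcomposition with $\gamma$ along the standard identification $\Set^{\mathcal{C}\times\Delta^{op}} \simeq \Fun(\Delta^{op},\Set^{\mathcal{C}})$, and then invoke the purely formal fact that postcomposition with any fully faithful functor remains fully faithful on functor categories.

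Concretely, I would first unfold the data of a morphism $F \to F'$ in $\Set^{\mathcal{C}\times \Delta^{op}}$ as a family $(\alpha_n\colon F_n \to F'_n)_{n\in\Delta^{op}}$ of morphisms in $\Set^{\mathcal{C}}$ subject to commutativity of the square
\begin{center}
\begin{tikzcd}
F_n \ar[r,"\alpha_n"] \ar[d,"F(\phi)",swap] & F'_n \ar[d,"F'(\phi)"]\\
F_m \ar[r,"\alpha_m",swap] & F'_m
\end{tikzcd}
\end{center}
for every $\phi\colon [m]\to[n]$ in $\Delta$. By construction of $\delta$, a morphism $\delta F \to \delta F'$ is likewise a family $(\beta_n\colon \gamma F_n \to \gamma F'_n)_{n\in\Delta^{op}}$ satisfying the analogous compatibility with $\gamma(F(\phi))$ and $\gamma(F'(\phi))$.

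Next, since $\gamma$ is fully faithful by \Cref{2.65}, for each $n$ there is a unique $\alpha_n\colon F_n\to F'_n$ with $\gamma(\alpha_n)=\beta_n$. It remains to observe that the compatibility conditions on $(\beta_n)$ and on $(\alpha_n)$ correspond to one another: functoriality of $\gamma$ gives $\gamma(\alpha_m\circ F(\phi)) = \gamma(\alpha_m)\circ \gamma(F(\phi)) = \beta_m\circ \gamma(F(\phi))$ and $\gamma(F'(\phi)\circ \alpha_n)=\gamma(F'(\phi))\circ \beta_n$, whence faithfulness of $\gamma$ lets us conclude that the square above commutes if and only if its image under $\gamma$ does. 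This produces the desired bijection between $\Hom(F,F')$ and $\Hom(\delta F,\delta F')$.

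I do not expect any real obstacle here: the entire statement is a formal consequence of the full faithfulness of $\gamma$, and the argument goes through verbatim for any fully faithful functor of ordinary categories postcomposed through $\Fun(\Delta^{op},-)$. The only point requiring care is the bookkeeping that the simplicial structure maps of $\delta F$ are literally the image under $\gamma$ of those of $F$, which is built into the definition of $\delta$.
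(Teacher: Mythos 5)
Your argument is correct and is essentially the same as the paper's: both reduce the claim to the componentwise bijections $\Set^{\mathcal{C}}(F_n,G_n) \cong \Set^G(\gamma F_n, \gamma G_n)$ furnished by full faithfulness of $\gamma$, the paper packaging the compatibility across $\Delta^{op}$ via the end formula while you unwind it as explicit commuting squares.
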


\begin{proof}
Recalling the expression of natural transformations as ends (for example, see \cite{CWM}, IX.5), compute for two functors $F$ and $G$

\begin{alignat*}{3}
& \Set^{\mathcal{C} \times \Delta^{op}}(F,G) & \cong \int_{n \in \Delta^{op}} \Set^{\mathcal{C}}(F_n,G_n)\\
&& \cong \int_{n \in \Delta^{op}} \Set^G(\gamma F_n, \gamma G_n) & \quad \textup{because $\gamma$ is fully faithful}\\
&& \cong \Set^{G \times \Delta^{op}} (\delta F, \delta G) & \quad \textup{by definition of $\delta$}.
\end{alignat*}
\end{proof}

Observe that $\Set^{\mathcal{C} \times \Delta^{op}}$ has the structure of a simplicial category given as follows: let $\Delta^n$ denote the functor $\mathcal{C} \times \Delta^{op} \to \Set$ defined by $(C,m) \mapsto [m,n]$, i.e. forgets $C$ and acts as the usual representable simplicial set $\Delta^n$. For two functors $F$ and $G$ now define

\begin{center}
$\Set^{\mathcal{C} \times \Delta^{op}} (F,G)_n = \Set^{\mathcal{C} \times \Delta^{op}} (F \times \Delta^n, G)$.
\end{center}

Then the sequence $\Set^{\mathcal{C} \times \Delta^{op}}(F,G)_{\bullet}$ has the structure of a simplicial set, and gives a simplicial enrichment. Similarly, $\Set^{G \times \Delta^{op}}$ is simplicially enriched. Our next goal will be to extend $\delta$ to a simplicial functor.\\
This is made a little difficult by the fact that requiring that $\gamma$ preserve products is very unreasonable. However, it \emph{almost} preserves products with discrete objects, as will be made precise in a second. Given a functor $F: \mathcal{C} \to \Delta^{op} \to \Set$, let us regard it as a functor $\Delta^{op} \to \Set^{\mathcal{C}}$. The functor $\delta F \times \Delta^n$ is defined by the assignation

\begin{center}
$m \mapsto \gamma F_m \times [m,n] \cong \coprod_{m \to n} \gamma F_m$.
\end{center}

Analogously, the functor $\delta (F \times \Delta^n)$ is defined by the assignation

\begin{center}
$m \mapsto \gamma (F_m \times [m,n]) \cong \gamma (\coprod_{m \to n} F_m)$.
\end{center}

\begin{defn}
The universal property of coproducts yields a natural map

\begin{center}
$\coprod_{m \to n} \gamma F_m \to \gamma (\coprod_{m \to n} F_m)$.
\end{center}

This induces a natural transformation $\theta: \delta F \times \Delta^n \to \delta (F \times \Delta^n)$.
\end{defn}

In general, $\theta$ is not an isomorphism, but the next lemma says that it behaves like one when we narrow down our attention to the image of $\delta$.

\begin{lem} \label{keylemma}
For two functors $F,G: \mathcal{C} \times \Delta^{op} \to \Set$ and $n \in \Delta^{op}$, the induced map

\begin{center}
$\theta^{\ast}: \Set^{G \times \Delta^{op}}(\delta(F \times \Delta^n), \delta G) \to \Set^{G \times \Delta^{op}}(\delta F \times \Delta^n, \delta G)$
\end{center}

is a bijection.
\end{lem}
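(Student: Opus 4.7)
The plan is to verify the bijection pointwise in $m \in \Delta^{op}$ using full faithfulness of $\gamma$, and then to assemble the pointwise bijections into a global bijection by expressing both hom-sets as ends indexed by $\Delta^{op}$.

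First I would rewrite both sides using the end formula for natural transformations (as was done in the proof of \Cref{deltaunenrichedfff}):
\begin{align*}
\Set^{G \times \Delta^{op}}(\delta F \times \Delta^n, \delta G) &\cong \int_{m \in \Delta^{op}} \Set^G\!\left(\coprod_{[m,n]} \gamma F_m,\ \gamma G_m\right),\\
\Set^{G \times \Delta^{op}}(\delta(F \times \Delta^n), \delta G) &\cong \int_{m \in \Delta^{op}} \Set^G\!\left(\gamma\!\left(\coprod_{[m,n]} F_m\right),\ \gamma G_m\right).
\end{align*}
Next I would observe that for each fixed $m$ there is a natural bijection between the two integrand sets: using the universal property of coproducts followed by full faithfulness of $\gamma$ and then reversing the coproduct formula, we get
$$\Set^G\!\left(\coprod_{[m,n]} \gamma F_m, \gamma G_m\right) \cong \prod_{[m,n]} \Set^G(\gamma F_m, \gamma G_m) \cong \prod_{[m,n]} \Set^{\mathcal{C}}(F_m, G_m) \cong \Set^{\mathcal{C}}\!\left(\coprod_{[m,n]} F_m, G_m\right),$$
and finally full faithfulness of $\gamma$ on the outside rewrites this as $\Set^G(\gamma(\coprod_{[m,n]} F_m), \gamma G_m)$.

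The crucial check, which I expect to be the only non-routine part, is verifying that the composite pointwise bijection is exactly precomposition with the comparison map $\coprod_{[m,n]} \gamma F_m \to \gamma(\coprod_{[m,n]} F_m)$ induced by the universal property of coproducts, i.e., the pointwise component of $\theta$. This follows by unwinding the definitions: both sides assign to a map $\varphi: \gamma(\coprod_{[m,n]} F_m) \to \gamma G_m$ the family $\{\varphi \circ \gamma(\iota_i)\}_{i \in [m,n]}$ of restrictions along the coproduct inclusions, and this is precisely the same as precomposing $\varphi$ with the comparison map. Naturality in $m$ is automatic because each step in the chain of bijections is natural, so the isomorphisms integrate to an isomorphism of ends, yielding the required bijection.
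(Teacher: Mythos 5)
Your proposal is correct. You take a more structural route than the paper: the paper proves the bijectivity of $\theta^{\ast}$ by a direct element-chase, first using fullness of $\gamma$ together with the universal property of the coproduct to construct a factorization of any $h: \delta F \times \Delta^n \to \delta G$ through $\theta$ (surjectivity), and then using faithfulness of $\gamma$ again to show the factorization is unique (injectivity). The paper works component-wise in $m$ and leaves implicit the point that the resulting family of dotted arrows is natural in $m$; this does follow from the uniqueness clause, but the paper does not dwell on it. Your version packages the same two ingredients — full faithfulness of $\gamma$ and the universal property of coproducts — into a chain of natural isomorphisms and then invokes the end formula, which has the advantage that naturality in $m$ is manifest rather than tacit, so the assembly step is free. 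The one thing you rightly flag as the non-routine content, namely that the composite pointwise bijection really is precomposition with the component $\theta_m$, is checked correctly: tracing $\varphi$ through the chain sends it to the unique map $\coprod_{[m,n]} \gamma F_m \to \gamma G_m$ whose restriction along each $j_\alpha$ is $\varphi \circ \gamma(i_\alpha)$, and by the defining property $\theta \circ j_\alpha = \gamma i_\alpha$ this is exactly $\varphi \circ \theta$. So your proof is sound and arguably a cleaner presentation; the paper's version is more elementary and keeps everything at the level of explicit maps.
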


\begin{proof}
We need to show that every map $h: \delta F \times \Delta^n \to \delta G$ factors uniquely through $\theta$, that is for every $m \in \Delta^{op}$ there exists a unique dotted arrow such that the diagram

\begin{center}
\begin{tikzcd}
\coprod_{m \to n} \gamma F_m \ar[d, "\theta", swap] \ar[r, "h"] & \gamma G_m\\
\gamma (\coprod_{m \to n} F_m) \ar[ur, dotted]
\end{tikzcd}
\end{center}

commutes. For each $\alpha: m \to n$, there are coproduct inclusions $i_{\alpha}: F_m \to \coprod_{m \to n} F_m$ and $j_{\alpha}: \gamma F_m \to \coprod_{m \to n} \gamma F_m$. Moreover, $\theta$ is defined to be the unique map such that $\theta \circ j_{\alpha} = \gamma i_{\alpha}$ for every $\alpha$.\\
Since $\gamma$ is full, there is $t_{\alpha}: F_m \to G_m$ such that $\gamma t_{\alpha} = h \circ j_{\alpha}$. By universal property of coproducts, there is a unique $s: \coprod_{m \to n} F_m \to G_m$ such that the diagram

\begin{center}
\begin{tikzcd}
F_m \ar[d, "i_{\alpha}"] \ar[r, "t_{\alpha}"] & G_m\\
\coprod_{m \to n} F_m \ar[ur, dotted, "s", swap]
\end{tikzcd}
\end{center}

commutes. Now, for every $\alpha$ we have

\begin{center}
$\gamma s \circ \theta \circ j_{\alpha} = \gamma s \circ \gamma i_{\alpha} = \gamma (s \circ i_{\alpha}) = \gamma t_{\alpha} = h \circ j_{\alpha}$
\end{center}

which, by universal property of coproducts, implies $\gamma s \circ \theta = h$. This proves that $\theta^{\ast}$ is surjective.\\
To prove that it is injective, we will show that if there is another map $r: \gamma (\coprod_{m \to n} F_m) \to \gamma G_m$ such that $r \circ \theta = h$ then it must be $r = \gamma s$. Since $\gamma$ is full, there is $s': \coprod_{m \to n} F_m \to G_m$ such that $\gamma s' = r$. Now, for each $\alpha$ compute

\begin{center}
$\gamma(s' \circ i_{\alpha}) = \gamma s' \circ \gamma i_{\alpha} = r \circ \theta \circ j_{\alpha} = h \circ j_{\alpha} = \gamma t_{\alpha}$
\end{center}

and, since $\gamma$ is faithful, this implies that $s' \circ i_{\alpha} = t_{\alpha}$. But $s$ was defined as the unique map such that $s \circ i_{\alpha} = t_{\alpha}$ for every $\alpha$, so we have $s = s'$ and the proof is complete.
\end{proof}

\begin{prop} \label{deltaenrichedfff}
The functor $\delta$ can be endowed with the structure of a simplicial functor, which also makes it fully faithful in the enriched sense.
\end{prop}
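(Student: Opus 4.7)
My plan is to define the enriched action of $\delta$ on each mapping simplicial set levelwise as the composition of two bijections. Concretely, at level $n$ set
\[ \delta_{F,G,n}: \Set^{\mathcal{C} \times \Delta^{op}}(F,G)_n = \Set^{\mathcal{C} \times \Delta^{op}}(F \times \Delta^n, G) \xrightarrow{\delta} \Set^{G \times \Delta^{op}}(\delta(F \times \Delta^n), \delta G) \xrightarrow{\theta^{\ast}} \Set^{G \times \Delta^{op}}(\delta F, \delta G)_n, \]
i.e.\ $\delta_{F,G,n}(f) = \delta(f) \circ \theta$. The first arrow is a bijection by \Cref{deltaunenrichedfff} and the second is a bijection by \Cref{keylemma}, so levelwise bijectivity, and hence the enriched full faithfulness, is immediate as soon as I verify that these levelwise maps assemble into a simplicial functor.

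For both the simpliciality check and the composition check it is convenient to observe that $\theta$ can be defined with an arbitrary simplicial set $K$ in place of $\Delta^n$, as $\theta_{F,K}: \delta F \times K \to \delta(F \times K)$, where $K$ is regarded as a constant-in-$\mathcal{C}$ presheaf on $\mathcal{C} \times \Delta^{op}$. The same universal-property definition used in the proof of \Cref{keylemma} makes $\theta_{F,K}$ natural in both $F \in \Set^{\mathcal{C} \times \Delta^{op}}$ and $K \in \sSet$, and yields the coherence
\[ \theta_{F, K \times L} = \theta_{F \times K, L} \circ (\theta_{F, K} \times \id_L), \]
both sides being characterised by the universal property of coproducts indexed by $K_m \times L_m$ at each simplicial level $m$.

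Functoriality in $n$ then follows immediately from naturality of $\theta$ in its simplicial-set slot (applied to $\Delta^{\alpha}: \Delta^m \to \Delta^n$ for $\alpha: [m] \to [n]$) combined with functoriality of the unenriched $\delta$. Preservation of identities is automatic since $\theta_{F, \Delta^0}$ is the identity on $\delta F$ up to the canonical identification $\delta F \times \Delta^0 \cong \delta F$.

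The main obstacle is compatibility with composition. Given $f: F \times \Delta^n \to G$ and $g: G \times \Delta^n \to H$, whose simplicial composition is $g \circ (f \times \id) \circ (\id_F \times d_n)$ for the diagonal $d_n: \Delta^n \to \Delta^n \times \Delta^n$, one needs
\[ \delta(g) \circ \delta(f \times \id) \circ \delta(\id \times d_n) \circ \theta_{F, \Delta^n} = \delta(g) \circ \theta_{G, \Delta^n} \circ \bigl((\delta(f) \circ \theta_{F, \Delta^n}) \times \id\bigr) \circ (\id_{\delta F} \times d_n). \]
I would obtain this by a three-step diagram chase: first, apply naturality of $\theta$ in $K$ along $d_n$ to rewrite $\delta(\id \times d_n) \circ \theta_{F, \Delta^n}$ as $\theta_{F, \Delta^n \times \Delta^n} \circ (\id \times d_n)$; second, apply the coherence above to decompose $\theta_{F, \Delta^n \times \Delta^n}$ as $\theta_{F \times \Delta^n, \Delta^n} \circ (\theta_{F, \Delta^n} \times \id)$; third, apply naturality of $\theta$ in $F$ to the morphism $f: F \times \Delta^n \to G$, which rewrites $\delta(f \times \id) \circ \theta_{F \times \Delta^n, \Delta^n}$ as $\theta_{G, \Delta^n} \circ (\delta f \times \id)$. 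Assembling the resulting factors reproduces the right-hand side, completing the verification.
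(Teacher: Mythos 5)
Your definition of the enriched structure is exactly the one in the paper (levelwise $\theta^{\ast} \circ \delta$, a bijection by \Cref{deltaunenrichedfff} and \Cref{keylemma}), so in that respect you take the same approach. The paper, however, stops after noting bijectivity and never verifies that these levelwise maps assemble into a simplicial functor, whereas you supply this verification in detail via the generalized $\theta_{F,K}$, its binaturality, and the coherence $\theta_{F, K \times L} = \theta_{F \times K, L} \circ (\theta_{F,K} \times \id_L)$; your diagram chase for compatibility with composition is correct and closes a real gap in the published argument.
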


\begin{proof}
Choose two functors $F$ and $G$ and $n \in \Delta^{op}$. The simplicial structure of $\delta$ at the $n$-th dimension is constructed as follows:

\begin{center}
\begin{tikzcd}
\Set^{\mathcal{C} \times \Delta^{op}}(F \times \Delta^n, G) \ar[r, "\delta"] & \Set^{G \times \Delta^{op}}(\delta(F \times \Delta^n), \delta G) \ar[r, "\theta^{\ast}"] & \Set^{G \times \Delta^{op}}(\delta F \times \Delta^n, \delta G)
\end{tikzcd}
\end{center}

where of course the first and the third term are respectively the $n$-th dimensions of the simplicial enrichments.\\
It remains to show that this composite is a bijection, but the arrow labeled $\delta$ is so by \Cref{deltaunenrichedfff}, and $\theta^{\ast}$ is bijective by \Cref{keylemma}.
\end{proof}

\begin{lem} \label{altinftylocalization}
Let $P$ be a fibrant replacement in the model structure on simplicial categories. Given a simplicial model category $\mathbf{N}$, there is an equivalence of $\infty$-categories

\begin{center}
$\N_{\Delta}(\mathbf{N}^{\circ}) \simeq \N_{\Delta}(P \mathbf{N})$.
\end{center}
\end{lem}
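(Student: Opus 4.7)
The plan is to leverage the Quillen equivalence between the Bergner model structure on $\sCat$ and the Joyal model structure on $\sSet$, whose right adjoint is $\N_\Delta$: this functor sends weak equivalences between Bergner-fibrant simplicial categories to equivalences of $\infty$-categories. It is therefore enough to exhibit a Bergner weak equivalence $\mathbf{N}^\circ \to P\mathbf{N}$ between Bergner-fibrant simplicial categories. For $P\mathbf{N}$, fibrancy holds by construction. For $\mathbf{N}^\circ$, fibrancy follows from the simplicial model category axioms, since the mapping space $\mathbf{N}(X,Y)$ between two bifibrant objects is a Kan complex, making the full simplicial subcategory $\mathbf{N}^\circ$ Kan-enriched and hence Bergner-fibrant.

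The natural candidate map is the composite
\[
j \colon \mathbf{N}^\circ \hookrightarrow \mathbf{N} \xrightarrow{\rho} P\mathbf{N},
\]
where $\rho$ is the Bergner fibrant replacement unit. Full faithfulness in the DK sense is essentially immediate: for $X, Y$ bifibrant one has $\mathbf{N}^\circ(X,Y) = \mathbf{N}(X,Y)$, already a Kan complex, and $\rho$ induces a Kan equivalence on every hom-simplicial-set by virtue of being a Bergner weak equivalence. So on bifibrant pairs the action of $j$ on mapping spaces is a Kan equivalence.

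The main obstacle, and the most delicate step, is essential surjectivity in the DK sense: every object of $P\mathbf{N}$ (whose object set coincides with that of $\mathbf{N}$) must be equivalent in the homotopy category of $P\mathbf{N}$ to some object of $\mathbf{N}^\circ$. For a generic $Y \in \mathbf{N}$ one takes a bifibrant replacement $\widetilde{Y} \in \mathbf{N}^\circ$, together with the canonical zig-zag of model-categorical weak equivalences (e.g. $\widetilde{Y} \to RY \leftarrow Y$) connecting them. The subtlety is that Bergner fibrant replacement does not on its own invert the weak equivalences of $\mathbf{N}$, so producing an actual equivalence in $\pi_0 P\mathbf{N}$ requires an extra input. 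One would appeal, for example, to the equivalence $\N_{\Delta}(\underline{\mathbf{M}}^{bif}) \simeq \textup{Loc}(\mathbf{M},\mathcal{W})$ recalled early in the appendix to identify both $\N_\Delta(\mathbf{N}^\circ)$ and $\N_\Delta(P\mathbf{N})$ with the underlying $\infty$-category of the relative category $(\mathbf{N},\mathcal{W})$, carefully tracking how non-bifibrant objects of $\mathbf{N}$ get absorbed into their bifibrant replacements under the nerve. Once DK-essential surjectivity is established, $j$ is a Bergner weak equivalence between Bergner-fibrant objects, and applying $\N_\Delta$ produces the desired equivalence of $\infty$-categories.
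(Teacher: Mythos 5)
You follow the same route as the paper: reduce to showing that the composite $\mathbf{N}^\circ \hookrightarrow \mathbf{N} \to P\mathbf{N}$ is a Bergner weak equivalence between Bergner-fibrant simplicial categories and then apply $\N_\Delta$. Your instinct that essential surjectivity in $\pi_0 P\mathbf{N}$ is the delicate point is correct, and in fact the paper's own proof passes over exactly this point: it asserts that $\mathbf{N}^\circ \hookrightarrow \mathbf{N}$ is a Dwyer--Kan weak equivalence ``because every object of $\mathbf{N}$ is weakly equivalent to a bifibrant one,'' but being connected to a bifibrant object by a zig-zag of model-categorical weak equivalences is \emph{not} the same as being isomorphic in $\pi_0\mathbf{N}$, i.e.\ simplicially homotopy equivalent, to a bifibrant one. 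Already for $\mathbf{N} = \sSet$ with the Kan--Quillen structure a simplicial set such as $\Delta^1/\partial\Delta^1$ is not simplicially homotopy equivalent to any Kan complex, so the inclusion $\mathbf{N}^\circ \hookrightarrow \mathbf{N}$ is not Bergner-essentially surjective, and the argument does not go through.

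Your proposed repair, however, does not close this gap. The identification $\N_\Delta(\mathbf{N}^\circ) \simeq \textup{Loc}(\mathbf{N},\mathcal{W})$ is correct, but the second identification you would need, $\N_\Delta(P\mathbf{N}) \simeq \textup{Loc}(\mathbf{N},\mathcal{W})$, is not. The Bergner fibrant replacement $\mathbf{N} \to P\mathbf{N}$ is a Dwyer--Kan equivalence of simplicial categories; it replaces each mapping simplicial set $\mathbf{N}(X,Y)$ by a Kan complex of the same homotopy type, but it has no knowledge of the model-categorical weak equivalences $\mathcal{W}$ and does not invert them (in particular $\pi_0\mathbf{N} \to \pi_0 P\mathbf{N}$ is an equivalence of categories, so nothing new becomes invertible). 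Consequently the mapping space from $X$ to $Y$ in $\N_\Delta(P\mathbf{N})$ has the homotopy type of $\mathbf{N}(X,Y)$, not of the derived mapping space, and these disagree whenever $X$ fails to be cofibrant or $Y$ fails to be fibrant. So $\N_\Delta(P\mathbf{N})$ is in general a genuinely different $\infty$-category from $\textup{Loc}(\mathbf{N},\mathcal{W})$, and no amount of bookkeeping will identify them. To salvage the intended use of this lemma one would either need to substitute a construction that actually models the localization, or arrange matters so that only bifibrant objects of the ambient simplicial category are ever invoked.
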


\begin{proof}
Consider the composite

\begin{center}
$\mathbf{N}^{\circ} \hookrightarrow \mathbf{N} \to P \mathbf{N}$
\end{center}

where the first map is the inclusion and the second is a trivial cofibration. The inclusion is in particular a weak equivalence, because every object of $\mathbf{N}$ is weakly equivalent to a bifibrant one, so the composite is a weak equivalence by the 2-out-of-3 property. Now observe that $\N_{\Delta}$ is a right Quillen functor, so by Ken Brown's lemma it preserves all weak equivalences between fibrant objects. Since both $\mathbf{N}^{\circ}$ and $P \mathbf{N}$ are fibrant, applying $\N_{\Delta}$ to the composite map above gives the desired equivalence.
\end{proof}

\begin{proof}[Proof of \Cref{i2.65}]
By \Cref{accessibletopresheaves}, we restrict to the case where $\mathcal{K}$ is presentable. In this case, \cite{HTT}, Theorem A.3.7.6 says that we can find a combinatorial simplicial model category $\mathbf{M}$ such that $\mathcal{K} \simeq \N_{\Delta}(\mathbf{M}^{\circ})$. Using \Cref{combinatorialtopresheaves}, we can further find a functor of the form $\mathbf{M} \to \sSet^{\mathcal{C}}$, where $\mathcal{C}$ is an ordinary small category, such that the induced $\N_{\Delta}(\mathbf{M}^{\circ}) \to \N_{\Delta}((\sSet^{\mathcal{C}})^{\circ})$ is fully faithful.\\
We are now reduced to showing that $\N_{\Delta}((\sSet^{\mathcal{C}})^{\circ})$ admits a full embedding as required. Now, the inclusion $(\sSet^{\mathcal{C}})^{\circ} \hookrightarrow \sSet^{\mathcal{C}}$ is a weak equivalence of simplicial categories (because every object is weakly equivalent to a bifibrant one), and moreover by \Cref{deltaenrichedfff} there is a functor $\sSet^{\mathcal{C}} \hookrightarrow \sSet^G$ which is a full embedding in the simplicially enriched sense. Finally, there is a trivial cofibration of simplicial categories $\sSet^G \to P \sSet^G$, where the codomain is a fibrant replacement in the model structure on simplicial categories.\\
We now have a simplicially enriched full embedding $(\sSet^{\mathcal{C}})^{\circ} \hookrightarrow P \sSet^G$, and since the mapping spaces of simplicial nerves are computed as mapping spaces of the original simplicial categories, this induces a full embedding $\N_{\Delta}((\sSet^{\mathcal{C}})^{\circ}) \hookrightarrow \N_{\Delta}(P \sSet^G)$.\\
We conclude by observing that, by \Cref{altinftylocalization}, the codomain of this functor is equivalent to $\N_{\Delta}((\sSet^G)^{\circ})$ which, in turn is equivalent to $\mathcal{S}^G$ by \cite{HTT}, Proposition 4.2.4.4.
\end{proof}


\end{document}